\numberwithin{equation}{section}
\newtheorem{theorem}{Theorem}[section]
\newtheorem{lemma}[theorem]{Lemma}
\newtheorem{prop}[theorem]{Proposition}
\theoremstyle{definition}
\newtheorem{example}{Example}
\theoremstyle{definition}
\newtheorem{remark}[theorem]{Remark}
\theoremstyle{definition}
\newtheorem{definition}[theorem]{Definition}
\theoremstyle{definition}
\newtheorem*{propA}{Proposition A}
\newtheorem*{propB}{Proposition B}
\def\dashint{\operatorname%
{\,\,\text{\bf-}\kern-.98em\DOTSI\intop\ilimits@\!\!}}
\begin{document}
\title[Solvability on the exterior problem for the Monge--Amp\`{e}re equation]
{Two Necessary and Sufficient Conditions to the Solvability of the Exterior Dirichlet Problem for the Monge--Amp\`{e}re Equation}

\author[C. Wang]{Cong Wang}
\address[C. Wang]{School of Statistics, University of International Business and Economics, Beijing 100029, China.}
\email{cong\_wang@uibe.edu.cn}

\author[J. Bao]{Jiguang Bao}
\address[J. Bao]{School of Mathematical Sciences, Beijing Normal University, Laboratory of Mathematics and Complex Systems, Ministry of Education, Beijing 100875, China.}
\email{jgbao@bnu.edu.cn}
\thanks{J. Bao was supported by the National Key Research and Development Program of China (No. 2020YFA0712904)}


\subjclass[2010]{35J96, 35J25, 35B40}

\keywords{Monge--Amp\`{e}re equation, Exterior Dirichlet problem, Semi-convex boundary value, Enclosing sphere condition, Necessary and sufficient condition, Solvability.}

\begin{abstract}
The present paper provides two necessary and sufficient conditions for the existence of solutions to the exterior Dirichlet problem of the Monge--Amp\`{e}re equation with prescribed asymptotic behavior at infinity. By an adapted smooth approximation argument, we prove that the problem is solvable if and only if the boundary value is semi-convex with respect to the inner boundary, which is our first proposed new concept. Along the lines of Perron’s method for Laplace equation, we obtain the threshold for solvability in the asymptotic behavior at infinity of the solution, and remove the $C^2$ regularity assumptions on the boundary value and on the inner boundary which are required in the proofs of the corresponding existence theorems in the recent literatures.
\end{abstract}
\maketitle

\section{Introduction}

Let $\Omega$ be a bounded domain of $\mathbb{R}^n$, $n\geq3$, and let $\varphi$ be a function on $\partial\Omega$. In this paper, we intend to explore the solvability of the exterior Dirichlet problem for the Monge--Amp\`{e}re equation
\begin{equation}\label{eq:exteriorD}
  \begin{cases}
  \det(D^2u)=1\quad  \text{in}~\mathbb{R}^n\setminus\overline{\Omega},\\
  u=\varphi\qquad\qquad\quad~~~~  \text{on}~\partial\Omega,
  \end{cases}
\end{equation}
provided that $\Omega$ and $\varphi$ satisfy some general conditions, without the $C^2$ regularity like in Caffarelli--Li's work \cite{Caffarelli-Li-2003}.

The prototypical place where Monge--Amp\`{e}re equations arise is the Minkowski problem (see \cite{Nirenberg1953,Pogorelov1978}). Monge--Amp\`{e}re equations also play a significant role in the studies of affine geometry (see \cite{Pogorelov1972,Cheng-Yau1986,Trudinger-Wang2008}) and optimal transportation (see \cite{Philippis-Figalli2014}). The interior Dirichlet problem for Monge--Amp\`{e}re equations
\begin{equation}\label{eq:InterDiri}
  \begin{cases}
  \det(D^2u)=f & \text{in}~\Omega,\\
  u=\varphi & \text{on}~\partial\Omega,\\
  \end{cases}
\end{equation}
has a long history, and there have been many excellent results, especially on the solvability in different situations. We list several known existence results for solutions of \eqref{eq:InterDiri}, under the assumptions that $f\in C^\infty(\overline{\Omega})$, $f>0$ on $\overline{\Omega}$, and $\Omega$ is a bounded and convex domain with $\partial\Omega$ containing no line segment. Rauch--Taylor \cite{Rauch-Taylor1977} proved that \eqref{eq:InterDiri} has a unique convex solution $u\in C^0(\overline{\Omega})$ by Perron's method, when $\varphi\in C^0(\partial\Omega)$; see also Aleksandrov \cite{Aleksandrov1958} and Cheng--Yau \cite{ChengYau1977}. Pogorelov \cite{Pogorelov1971} obtained that the unique convex solution of \eqref{eq:InterDiri} is smooth in $\Omega$, when $\partial\Omega\in C^2$ and $\varphi\in C^2(\partial\Omega)$. After that, Caffarelli--Nirenberg--Spruck \cite{CNS1984CPAM} further proved that \eqref{eq:InterDiri} has a unique convex solution $u\in C^\infty(\overline{\Omega})$ by the continuity method, when $\Omega$ is strictly convex with $\partial\Omega\in C^\infty$ and $\varphi\in C^\infty(\partial\Omega)$; see also Krylov \cite{Krylov1983/85}.

By contrast, less results are known for the exterior Dirichlet problem for Monge--Amp\`{e}re equations. Differently from the interior Dirichlet problem \eqref{eq:InterDiri}, in exterior domains we also require the solutions to satisfy appropriate prescribed asymptotic behavior at infinity in order to restore the well-posedness. Such asymptotic behavior comes from Liouville-type theorems. The celebrated J\"{o}rgens--Calabi--Pogorelov theorem \cite{Jorgens1954,Calabi1958,Pogorelov1972} states that any classical convex solution of
\begin{equation}\label{eq0:MA=1}
\det(D^2u)=1
\end{equation}
in $\mathbb{R}^n$ must be a quadratic polynomial.
Caffarelli \cite{Caffarelli1995} generalized this result to viscosity solutions case. Caffarelli--Li \cite{Caffarelli-Li-2003} extended the J\"{o}rgens--Calabi--Pogorelov theorem to exterior domains. Specifically, they showed that if $n\geq3$ and $u$ is a viscosity solution of \eqref{eq0:MA=1} outside a bounded set, then there exist $A\in\mathcal{A}$, $b\in\mathbb{R}^n$ and $c\in\mathbb{R}$ such that
\begin{equation*}
  \lim_{|x|\to\infty}\bigg(u(x)-\bigg(\frac{1}{2}xAx^T+b\cdot x+c\bigg)\bigg)=0,
\end{equation*}
where
$$\mathcal{A}=\{A|\ A \text{ is a real }n\times n\text{ symmetric positive definite matrix with }\det(A)=1\}.$$
We refer to \cite{BCGJ-AMJ,LRW-JFA,WangBao-NLA} for more information about the Liouville-type theorems for Hessian equations.

Based on the asymptotic behavior above, Caffarelli--Li \cite{Caffarelli-Li-2003} proposed the following exterior Dirichlet problem
\begin{equation}\label{eq:DiriProb}
\begin{cases}
\det(D^2u)=1\quad\text{in}~\mathbb{R}^n\setminus\overline{\Omega},\\
u=\varphi\quad\text{on}~\partial\Omega,\\
\lim_{|x|\to\infty}\Big(u(x)-\Big(\frac{1}{2}xAx^T+b\cdot x+c\Big)\Big)=0.
\end{cases}
\end{equation}
Under the conditions of $\partial\Omega\in C^2$ and $\varphi\in C^2(\partial\Omega)$, Caffarelli--Li \cite{Caffarelli-Li-2003} proved the existence result by an adapted Perron's method, and then Li--Lu \cite{Li-Lu} gave the nonexistence result in terms of the asymptotic behavior. Therefore the characterization of solvability of \eqref{eq:DiriProb} is completed.

\begin{theorem}[Caffarelli--Li \cite{Caffarelli-Li-2003} and Li--Lu \cite{Li-Lu}]\label{thm:CL2003}
Let $\Omega$ be a bounded, strictly convex domain of $\mathbb{R}^n$, $n\geq3$, $\partial\Omega\in C^2$ and let $\varphi\in C^2(\partial\Omega)$. Then for any $A\in\mathbb{\mathcal{A}}$ and $b\in\mathbb{R}^n$, there exists some constant $c_*$, such that \eqref{eq:DiriProb} has a viscosity solution in $C^0(\mathbb{R}^n\setminus\Omega)$ if and only if $c\geq c_*$, where $c_*$ depends only on $n$, $A$, $b$, $\Omega$ and $\varphi$. 
\end{theorem}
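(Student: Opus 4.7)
The plan is to prove the theorem in two complementary pieces: sufficiency (existence whenever $c\geq c_*$) via an adapted Perron's method following Caffarelli--Li, and necessity (finiteness of the threshold $c_*$) via a comparison argument due to Li--Lu. A convenient first step is to reduce to $A=I$ by the linear change of variables $y=A^{1/2}x$, which preserves $\det(D^2 u)=1$ and conjugates the prescribed asymptote into $\tfrac12|y|^2+b'\cdot y+c$, so that one only needs to handle the standardized exterior problem.

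For existence, I would introduce the Perron class $S$ of locally convex viscosity subsolutions $w\in C^0(\mathbb{R}^n\setminus\Omega)$ of $\det(D^2 w)\geq 1$ satisfying $w\leq\varphi$ on $\partial\Omega$ and $\limsup_{|x|\to\infty}(w(x)-Q(x))\leq 0$, where $Q(x)=\tfrac12\,xAx^T+b\cdot x+c$. The $C^2$ regularity of $\varphi$ together with the strict convexity and $C^2$ regularity of $\partial\Omega$ permits, at every $x_0\in\partial\Omega$, the construction of a supporting affine function $\ell_{x_0}\leq\varphi$ on $\partial\Omega$ with $\ell_{x_0}(x_0)=\varphi(x_0)$; combining a finite maximum of such affine pieces with a large quadratic perturbation produces a nontrivial element of $S$ provided $c$ is large enough. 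Setting $u=\sup_{w\in S}w$, the standard envelope arguments show $u^*$ is a viscosity subsolution and $u_*$ a supersolution, and Caffarelli's interior regularity then yields smoothness in $\mathbb{R}^n\setminus\overline{\Omega}$. Boundary attainment $u=\varphi$ on $\partial\Omega$ is obtained from local barriers built from the $C^2$ graph of $\varphi$ and the strictly convex $\partial\Omega$, while the exact asymptotic behavior is obtained by combining the upper barrier $Q$ with the Caffarelli--Li Liouville-type theorem for exterior solutions of \eqref{eq0:MA=1}, which upgrades the qualitative bound $u\leq Q+o(1)$ to $u-Q\to 0$.

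The threshold $c_*$ is defined as the infimum of the $c$'s for which the above construction succeeds. Monotonicity---if a solution exists for $c$ then one also exists for every $c'>c$, obtained by gluing the original solution to a shifted quadratic outside a large ball and reapplying Perron---shows the solvable set is a half-line $[c_*,\infty)$. For necessity, suppose $u$ solves \eqref{eq:DiriProb} with some $c$. By the Caffarelli--Li Liouville theorem, $u-Q\to 0$ at infinity, hence on large spheres $\partial B_R$ the values of $u$ are essentially those of $Q$. Applying the Alexandrov maximum principle on the annulus $B_R\setminus\overline{\Omega}$ and computing the Monge--Amp\`ere mass using $\det(D^2 u)=1=\det(D^2 Q)$ yields an explicit lower bound on $c$ in terms of $n$, $A$, $b$, $\Omega$ and $\varphi$; the infimum of all such lower bounds is then identified with $c_*$, which is automatically finite.

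The main obstacle is the simultaneous control of boundary attainment and asymptotic behavior: subsolutions in $S$ must be flexible enough to meet $\varphi$ on $\partial\Omega$ yet rigid enough not to exceed $Q$ at infinity. The $C^2$ hypotheses on $\varphi$ and $\partial\Omega$ enter precisely at the affine-tangent and quadratic-barrier step, as does the strict convexity of $\Omega$ in ensuring that the supporting affine functions separate the boundary data. Removing these hypotheses, which is the main contribution of the present paper, will require an intrinsic replacement for the classical barriers, motivating the notion of semi-convexity with respect to the inner boundary announced in the abstract.
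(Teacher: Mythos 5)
Theorem~\ref{thm:CL2003} is a cited result; the paper itself does not prove it, but the proof of the paper's Theorem~\ref{thm:main} (Sections~\ref{sec:eq}--\ref{sec:main}) subsumes it and follows the same Caffarelli--Li/Li--Lu template you sketch: reduce to $A=I$ by a linear change of variables, run Perron's method on a subsolution class, attain the boundary data via a barrier, control the asymptotics, and use Li--Lu's three-step structure to identify the sharp $c_*$. So the overall architecture of your plan is right, but two of your mechanisms are not.

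First, the boundary barrier. You cannot in general produce a \emph{supporting affine function} $\ell_{x_0}\leq\varphi$ on $\partial\Omega$ with equality at $x_0$: $\varphi$ is merely $C^2$, not convex along $\partial\Omega$, so affine minorants need not exist. What $C^2(\partial\Omega)$, strict convexity and $C^2$ smoothness of $\partial\Omega$ actually buy is a \emph{quadratic} minorant with Hessian equal to $I$ and translated center, of the form $w_\xi(x)=\varphi(\xi)+\tfrac12\bigl(|x-\bar x(\xi)|^2-|\xi-\bar x(\xi)|^2\bigr)$, which lies strictly below $\varphi$ on $\partial\Omega\setminus\{\xi\}$ once the center $\bar x(\xi)$ is pushed far enough in a direction determined by the inner normal. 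This is precisely the paper's Lemma~\ref{lem:barrier} and the analogue of Caffarelli--Li's Lemma~3.1; moreover these quadratics are genuine subsolutions ($\det D^2 w_\xi=1$), which affine functions are not, and that is what lets them enter the Perron family. Your ``affine pieces plus quadratic perturbation'' needs to be replaced by this construction; the $C^2$ hypotheses enter exactly in proving uniform bounds on the translation $|\bar x(\xi)|$ over $\partial\Omega$.

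Second, the asymptotics. It is not true that the Caffarelli--Li Liouville theorem ``upgrades'' the one-sided bound $u\leq Q+o(1)$ to $u-Q\to 0$. The Liouville theorem only tells you $u$ approaches \emph{some} quadratic $Q'$; combined with the upper barrier it gives $A'=A$, $b'=b$ but only $c'\leq c$, leaving the constant unpinned from below. The missing ingredient is an explicit family of \emph{lower} barriers with the correct growth, namely the radial subsolutions $w_\alpha(x)=\inf_{\partial\Omega}\varphi+\int_{r_2}^{|x|}(s^n+\alpha)^{1/n}\,ds$ of equation~\eqref{eq:w-alpha}, whose asymptotic constant $\mu(\alpha)$ ranges over $[\mu(-r_1^n),\infty)$ as $\alpha$ varies. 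Choosing $\alpha=\mu^{-1}(c)$ puts $w_{\mu^{-1}(c)}$ in the Perron class and forces $\liminf(u-\tfrac12|x|^2)\geq c$; together with the upper barrier $v^+=\tfrac12|x|^2+c$ this yields $u-\tfrac12|x|^2\to c$. Without this lower construction your asymptotic step has a genuine gap. The Li--Lu nonexistence/monotonicity steps you outline (a lower bound on $c$ for any subsolution, plus propagation of solvability along the interval $(c_3,c_1)$) are the correct skeleton, though the concrete estimate is obtained by comparison with these same explicit radial functions rather than by an abstract Alexandrov-measure count.
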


Moreover, when the problem \eqref{eq:DiriProb} has a viscosity solution, it is unique by the comparison principle, and interior smooth by \cite{Caffarelli1995}.

Under the same assumptions on the domain and on the boundary value as in Theorem \ref{thm:CL2003}, the solvability of the exterior Dirichlet problem was also exploited for $k$-Hessian equations \cite{Bao-Li-Li-TAMS}, Hessian quotient equations \cite{Li-Li2018}, special Lagrangian equation \cite{Li2019TAMS}, and general Hessian-type equations \cite{Li-Bao2014,Jiang-Li-Li2021}. The ideas therein are similar in spirit to \cite{Caffarelli-Li-2003}.

The aim of this paper is to improve the $C^2$ regularity condition of both domains and boundary values in Theorem \ref{thm:CL2003}. We are interested in studying the solvability of \eqref{eq:DiriProb} under the geometry and regularity conditions which are corresponding to that of Laplace equation. For the interior Dirichlet problem for Laplace equation, to guarantee the continuity up to the boundary of the solution, the continuous boundary value is necessary, and the domain needs to be regular (i.e. there exists a barrier function at each boundary point); see \cite[Chapter 2.8]{GT}. A known sufficient condition that makes the domain be regular is the exterior sphere condition. For the exterior Dirichlet problem for Laplace equation with prescribed limit at infinity, there is a unique solution when $n\geq3$, the domain is smooth and the boundary is continuous; see for instance Meyers--Serrin \cite{Meyers-Serrin}.

We firstly focus on the boundary value $\varphi$ on $\partial\Omega$. For interior Dirichlet problem \eqref{eq:InterDiri}, the solution exists if and only if $\varphi\in C^{0}(\partial\Omega)$ due to Rauch--Taylor \cite{Rauch-Taylor1977}. While for the exterior Dirichlet problem \eqref{eq:DiriProb},
the convexity of $\mathbb{R}^n\setminus\overline\Omega$ is opposite to that of $\Omega$,
which may lead to that $\varphi$ in \eqref{eq:DiriProb} has different and even stronger structure from $\varphi\in C^0(\partial\Omega)$ in \eqref{eq:InterDiri}. Based on such observation, we are naturally motivated to investigate the necessary condition of $\varphi$ for the existence of solutions. Unexpectedly, we derive that the ``semi-convexity'' condition below, which is a different phenomenon from interior Dirichlet problem \eqref{eq:InterDiri}.

In order to introduce the new concept of ``semi-convexity'' clearly, we first introduce the local coordinate system at a boundary point. For any fixed $\xi\in\partial\Omega$, we choose a coordinate system $(x',x_n)$ such that $x=0$ at $\xi$, the positive $x_n$-axis directs to the interior of $\Omega$,  and $\partial\Omega$ can be locally represented by the graph of
\begin{equation}\label{eq:bdyre}
x_n=\rho(x')\quad\text{for}~|x'|<\delta(\xi)
\end{equation}
for some constant $\delta(\xi)>0$, where $\rho:B'_{\delta(\xi)}(0')=\{x'\in\mathbb{R}^{n-1}|\,|x'|<\delta(\xi)\}\to\mathbb{R}$ is a function with $\rho(0')=0$.
We call such coordinate system the local coordinate system at $\xi$.

\begin{definition}\label{defn:semi-convex}
  Let $\varphi$ be a function defined on $\partial\Omega$. We say that $\varphi$ is semi-convex with respect to $\partial\Omega$ at $\xi$, if under the local coordinate system at $\xi$,
  the function
  $$\psi(x'):=\varphi(x',\rho(x'))$$
  is semi-convex in $B'_{\delta(\xi)}(0')$, that is, there exists a constant $K(\xi)>0$, such that $\psi(x')+\frac{K(\xi)}{2}|x'|^2$ is convex in $B'_{\delta(\xi)}(0')$. We say that $\varphi$ is semi-convex with respect to $\partial\Omega$, if $\varphi$ is semi-convex with respect to $\partial\Omega$ at each $\xi\in\partial\Omega$.
\end{definition}

Our first main result is that the boundary value is necessarily semi-convex with respect to the boundary for the existence of solutions of \eqref{eq:DiriProb}.
\begin{theorem}\label{thm:necessity}
Let $\Omega$ be a bounded convex domain of $\mathbb{R}^n$, $n\geq3$, $\partial\Omega\in C^3$. Let $u\in C^0(\mathbb{R}^n\setminus\Omega)$ be a viscosity solution of \eqref{eq:exteriorD}, then $\varphi$ is semi-convex with respect to $\partial\Omega$.
\end{theorem}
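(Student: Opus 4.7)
\emph{Plan.} Fix $\xi \in \partial\Omega$ and work in the local coordinate system at $\xi$. The convexity of $\Omega$ and $\partial\Omega \in C^3$ guarantee $\rho(0') = 0$, $\nabla\rho(0') = 0$, $\rho \geq 0$ on $B'_{\delta(\xi)}(0')$, $\rho$ convex, and $K_0 := \|D^2\rho\|_{L^\infty(B'_{\delta(\xi)}(0'))} < \infty$. Set $\psi(x') := \varphi(x', \rho(x'))$. As a viscosity solution of $\det(D^2 u) = 1 > 0$, $u$ is locally convex in $\mathbb{R}^n \setminus \overline\Omega$; by continuity of $u$ up to $\partial\Omega$, $u$ is convex along any line segment contained in $\mathbb{R}^n \setminus \Omega$ whose relative interior lies in the open exterior.

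For $x', y' \in B'_{\delta(\xi)/2}(0')$, set $m := (x'+y')/2$ and define the convexity defect $\phi(t) := (1-t)\rho(x') + t\rho(y') - \rho((1-t)x'+ty')$. Convexity of $\rho$ and the second-order Taylor bound give $\phi(t) \leq \tfrac{K_0}{2}t(1-t)|x'-y'|^2$, hence $\phi_{\max} := \max_{t \in [0,1]}\phi(t) \leq \tfrac{K_0}{8}|x'-y'|^2$. Take $h := \phi_{\max}$. The line segment from $X_h := (x', \rho(x') - h)$ to $Y_h := (y', \rho(y') - h)$ remains in the closed exterior because at parameter $t$ its $x_n$-coordinate equals $\rho((1-t)x'+ty') + \phi(t) - h \leq \rho((1-t)x'+ty')$. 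Convexity of $u$ along this segment yields
\[
u(M_h) \leq \tfrac{1}{2}\bigl(u(X_h) + u(Y_h)\bigr), \qquad M_h := \bigl(m,\,\rho(m)+\phi(1/2)-h\bigr),
\]
where $M_h$ lies at vertical distance $h - \phi(1/2) \in [0,h]$ below the boundary point $(m,\rho(m))$.

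To transfer this inequality to $\psi$, I would establish a local boundary Lipschitz bound $|u(x', \rho(x') - s) - \psi(x')| \leq Ls$ for $0 \leq s \leq s_0$, uniform for $x'$ in a neighborhood of $0'$, for some $L = L(\xi, u, \rho) > 0$. I expect to prove this by constructing upper and lower barriers out of $\rho$ combined with perturbations of explicit Monge-Amp\`{e}re solutions, and applying the comparison principle; the hypotheses $\Omega$ convex and $\partial\Omega \in C^3$ are used precisely here. Granting this bound,
\[
\psi(m) - Lh \leq u(M_h) \leq \tfrac{1}{2}\bigl(u(X_h)+u(Y_h)\bigr) \leq \tfrac{1}{2}\bigl(\psi(x')+\psi(y')\bigr) + Lh,
\]
so $\psi(m) \leq \tfrac{1}{2}(\psi(x')+\psi(y')) + 2Lh \leq \tfrac{1}{2}(\psi(x')+\psi(y')) + \tfrac{LK_0}{4}|x'-y'|^2$, which is midpoint semi-convexity of $\psi$; combined with continuity, this gives semi-convexity with constant $K(\xi) = 2LK_0$. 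Since $\xi \in \partial\Omega$ was arbitrary, $\varphi$ is semi-convex with respect to $\partial\Omega$. The main obstacle is exactly the boundary Lipschitz estimate: bare continuity of $u$ only yields an unspecified modulus, which is too weak to conclude, so the proof must genuinely exploit the Monge-Amp\`{e}re ellipticity together with the convex $C^3$ boundary geometry to produce the linear rate in $s$.
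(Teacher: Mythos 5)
Your plan runs into a genuine obstruction at precisely the step you flag as the ``main obstacle.'' The two-sided boundary Lipschitz estimate $|u(x',\rho(x')-s)-\psi(x')|\le Ls$ splits into two halves of very different character. The upper half, $u(x',\rho(x')-s)\le\psi(x')+Ls$, does follow from convexity along the inward ray (it is exactly the mechanism behind the paper's lower bound on $u^\varepsilon_n$). But the lower half, $u(x',\rho(x')-s)\ge\psi(x')-Ls$, is not a consequence of local convexity plus continuity: a convex function continuous up to the boundary can have an infinite normal slope (think of $-\sqrt{s}$ along the ray), in which case $u$ drops below $\psi(x')$ faster than linearly. To rule this out you propose a lower barrier built from perturbed Monge--Amp\`ere solutions, but the only lower barriers available for this problem (such as the quadratic barrier $w_\xi$ in Lemma~\ref{lem:barrier}) are constructed \emph{assuming} semi-convexity of $\varphi$; without that assumption a smooth subsolution that touches $u=\varphi$ from below at every nearby boundary point need not exist. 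So the argument as written is circular at exactly the point where it must cease to be conditional.

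The paper circumvents this entirely by mollification. It proves the stronger statement that \emph{any} locally convex $u\in C^0(\mathbb{R}^n\setminus\Omega)$ has semi-convex trace on a convex $C^3$ boundary, with no use of the Monge--Amp\`ere equation beyond the fact that viscosity solutions are locally convex. One mollifies $u$ at scale $\varepsilon$ (after pushing the boundary out by $2\varepsilon$ along the inner normal), computes $D^2\psi^\varepsilon$ of the pulled-back boundary function exactly, and observes that the nonnegative contribution of $D^2u^\varepsilon\ge0$ absorbs all second-order terms; the only remainders are $u^\varepsilon_n D'^2\rho$ (controlled from below since $u^\varepsilon_n\ge -C\|u\|_{L^\infty}$ by convexity along rays and $D'^2\rho\ge0$) and curvature terms carrying an explicit prefactor $\varepsilon$, controlled because the mollification estimate gives $\varepsilon|Du^\varepsilon|\le C(n)\|u\|_{L^\infty}$ — an $L^\infty$ bound, not a Lipschitz bound. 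No first-order regularity of $u$ near $\partial\Omega$ is ever required; the $\varepsilon$ prefactor does the work your Lipschitz constant $L$ was supposed to do. That is the idea missing from your proposal, and it is also why the paper needs $\partial\Omega\in C^3$ (two derivatives of the normal field $\mu$ appear), whereas the part of your argument you did carry out would only need $C^2$.

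Your reduction from the Lipschitz estimate to midpoint semi-convexity of $\psi$ is correct, and passing from midpoint semi-convexity plus continuity to full semi-convexity is also fine. But without the lower Lipschitz bound the chain $\psi(m)-Lh\le u(M_h)$ is unjustified, and I do not see how to repair it short of adopting an approach, such as the paper's mollification, that does not ask for pointwise normal-derivative control of $u$ at $\partial\Omega$.
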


With the necessity in hand, we are inspired to study the solvability of \eqref{eq:DiriProb} under the semi-convexity condition. We further focus on the domain $\Omega$.
Following \cite[Chapter 14.2]{GT}, we use the geometry concept of enclosing sphere condition below, which was also used in Urbas's work on studying prescribed Gauss curvature problem \cite{Urbas-1984,Urbas-1998}. Such geometry condition on the domain is much weaker than $C^2$ regularity condition.

\begin{definition}\label{defn:exterior-shpere}
  We say that $\Omega$ satisfies an enclosing sphere condition at $\xi\in\partial\Omega$, if there exists a ball $B=B_{r(\xi)}(y(\xi))\supset\Omega$ satisfying $\xi\in\partial\Omega\cap\partial B$. We say that $\Omega$ satisfies an enclosing sphere condition, if $\Omega$ satisfies an enclosing sphere condition at each $\xi\in\partial\Omega$. Moreover, we say that $\Omega$ satisfies a uniform enclosing sphere condition, if
  $r(\xi)$ is bounded on $\partial\Omega$.
\end{definition}

Suppose that the semi-convexity with respect to the boundary and a uniform enclosing sphere conditions hold. Our second main result is a necessary and sufficient condition to the existence of solution of \eqref{eq:DiriProb} in terms of the asymptotic behavior near infinity.
\begin{theorem}\label{thm:main}
  Let $\Omega$ be a domain of $\mathbb{R}^n$ satisfying a uniform enclosing sphere condition,
  $n\geq3$, $\partial\Omega\in C^1$. Let $\varphi$ be semi-convex with respect to $\partial\Omega$. Then for any $A\in\mathcal{A}$ and $b\in\mathbb{R}^n$, there exists some constant $c_*$, such that \eqref{eq:DiriProb} has a viscosity solution in $C^0(\mathbb{R}^n\setminus\Omega)$ if and only if $c\geq c_*$, where $c_*$ depends only on $n$, $A$, $b$, $\Omega$ and $\varphi$.
\end{theorem}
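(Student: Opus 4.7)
\textbf{Proof plan of Theorem~\ref{thm:main}.} I will adapt the Perron method of Caffarelli--Li~\cite{Caffarelli-Li-2003} to the reduced-regularity setting, replacing their $C^2$ hypotheses on $\partial\Omega$ and $\varphi$ by the uniform enclosing sphere condition and the semi-convexity of $\varphi$. For each $c\in\mathbb{R}$, introduce the family
\[
\mathcal{F}_c := \Bigl\{w\in C^0(\mathbb{R}^n\setminus\Omega):w\text{ convex},\ \det D^2 w\ge 1,\ w\le\varphi\text{ on }\partial\Omega,\ \limsup_{|x|\to\infty}(w(x)-\tfrac12 xAx^T-b\cdot x)\le c\Bigr\},
\]
which is monotone nondecreasing in $c$, and set $c_*:=\inf\{c:\mathcal{F}_c\text{ produces a solution of \eqref{eq:DiriProb}}\}$. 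Nonemptiness of $\mathcal{F}_c$ for large $c$ (hence $c_*<\infty$) will be obtained by a smooth approximation of $\partial\Omega$ and $\varphi$ combined with Theorem~\ref{thm:CL2003}. Any candidate solution with asymptotic constant $c_0$ lies in $\mathcal{F}_{c_0}$, and the monotonicity of $\mathcal{F}_c$ combined with a comparison against the pure quadratic forces the set of solvable $c$ to be the upper ray $[c_*,\infty)$, giving nonexistence for $c<c_*$.

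For $c\ge c_*$, take $u_c:=\sup\mathcal{F}_c$. Standard viscosity arguments using Caffarelli's interior $C^{2,\alpha}$ regularity~\cite{Caffarelli1995} and the Alexandrov--Bakelman--Pucci estimate show $u_c$ is a viscosity solution of $\det D^2 u_c=1$ in $\mathbb{R}^n\setminus\overline{\Omega}$, and sandwiching $u_c$ between the pure quadratic $\tfrac12 xAx^T+b\cdot x+c$ and a Caffarelli--Li-type subsolution recovers the prescribed asymptotic behavior, just as in~\cite{Caffarelli-Li-2003}. The genuinely new work is the boundary attainment $\lim_{x\to\xi}u_c(x)=\varphi(\xi)$ at each $\xi\in\partial\Omega$, for which new barriers must be constructed from the semi-convexity and the enclosing sphere.

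The enclosing sphere condition yields the quantitative separation $(x-\xi)\cdot\nu(\xi)\le-|x-\xi|^2/(2r(\xi))$ on $\overline\Omega$ with $\nu(\xi)=(\xi-y(\xi))/r(\xi)$, which translates in the local coordinates at $\xi$ to $\rho(x')\ge|x'|^2/(2r(\xi))$ on $\partial\Omega$ near $\xi$. For the lower bound $u_c(\xi)\ge\varphi(\xi)$, the semi-convex paraboloid $\psi(x')\ge\psi(0')+q\cdot x'-\tfrac{K(\xi)}{2}|x'|^2$, combined with the quadratic lower bound on $\rho$, permits the construction of a convex quadratic $w_\xi(x):=\varphi(\xi)+q\cdot x'+p_n x_n+\tfrac12 x^T N x$ with $N\in\mathcal{A}$ and $p_n$ sufficiently negative, satisfying $w_\xi\le\varphi$ on $\partial\Omega\cap B_\delta(\xi)$ and $w_\xi(\xi)=\varphi(\xi)$; the local $w_\xi$ is then joined to a Caffarelli--Li-type subsolution far from $\xi$ via the max operation (which preserves the subsolution property for convex functions), producing a competitor in $\mathcal{F}_c$. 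For the upper bound $\limsup_{x\to\xi}u_c(x)\le\varphi(\xi)$, I build a concave supersolution $\bar w_{\xi,\epsilon}(x):=\varphi(\xi)+\epsilon-M\nu(\xi)\cdot(x-\xi)-\tfrac12|x-\xi|^2$ with $M=M(L,r(\xi),\epsilon)$ large in terms of the local Lipschitz constant $L$ of $\varphi$ (available from semi-convexity); the quadratic gain from the enclosing sphere absorbs the Lipschitz growth of $\varphi$, yielding $\bar w_{\xi,\epsilon}\ge\varphi$ on $\partial\Omega\cap B_\delta(\xi)$. A careful localization of the comparison principle to $B_\delta(\xi)\setminus\overline{\Omega}$, matching the outer boundary piece via the explicit a priori upper bound for $u_c$ coming from the asymptotic data, then delivers $u_c(\xi)\le\varphi(\xi)+\epsilon$, and $\epsilon\to 0^+$ closes the argument.

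The principal technical difficulty is the construction and deployment of the upper barrier: semi-convexity provides no upper paraboloid for $\varphi$, and the merely $C^1$ regularity of $\partial\Omega$ gives only $\rho(x')=o(|x'|)$, so the $C^2$ barriers of~\cite{Caffarelli-Li-2003} are unavailable. The quadratic gap $|x-\xi|^2/(2r(\xi))$ afforded by the enclosing sphere is precisely what is needed to absorb the merely-Lipschitz growth of a semi-convex $\varphi$, and the uniformity of $r(\xi)$ over $\partial\Omega$ ensures that all barrier constants---and hence the threshold $c_*$---depend only on $n,A,b,\Omega$, and $\varphi$.
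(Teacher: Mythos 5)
Your overall architecture matches the paper's: a Perron family, interior viscosity-solution and asymptotic arguments, a lower barrier built from the semi-convexity plus enclosing sphere, and the four-step sharp-threshold argument following Li--Lu. Your lower barrier is essentially the paper's Lemma~\ref{lem:barrier} (a convex paraboloid touching $\varphi$ from below at $\xi$, below $\varphi$ elsewhere on $\partial\Omega$, then joined to a radial subsolution via $\max$). So the lower half of the boundary argument is sound in spirit.

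The upper barrier, however, has a genuine gap. You propose a concave quadratic $\bar w_{\xi,\epsilon}(x)=\varphi(\xi)+\epsilon-M\,\nu(\xi)\cdot(x-\xi)-\tfrac12|x-\xi|^2$ and a comparison on $B_\delta(\xi)\setminus\overline\Omega$, ``matching the outer boundary piece via the explicit a priori upper bound for $u_c$ coming from the asymptotic data.'' That matching does not close. The only a priori upper bound available before the boundary continuity is established is $v\le\tfrac12 xAx^T+b\cdot x+c$ for every competitor $v$. On the tangential part of $\partial B_\delta(\xi)\setminus\overline\Omega$ (where $\nu(\xi)\cdot(x-\xi)\approx 0$) your barrier is $\approx\varphi(\xi)+\epsilon-\tfrac12\delta^2$, which is \emph{independent of $c$}, whereas the a priori bound on $v$ there is $\approx\tfrac12|\xi|^2+c$ and grows unboundedly in $c$. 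So for $c$ large (precisely the regime where existence is first established) you cannot have $\bar w_{\xi,\epsilon}\ge v$ on the outer boundary, and the comparison fails. Enlarging the comparison domain to a fixed $B_{r_2}(0)\setminus\overline\Omega$ is worse: the concave quadratic decreases like $-\tfrac12|x-\xi|^2-M\nu(\xi)\cdot(x-\xi)$ while $v$ grows like $\tfrac12|x|^2+c$. Shrinking $\delta$ with $\epsilon$ does not help either, since it makes no a priori control of $v$ on $\partial B_\delta(\xi)$ available; that control is precisely the statement you are trying to prove. In short, there is no bounded comparison domain on which a concave quadratic supersolution can sit above all Perron competitors.

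The paper resolves this differently: it observes that any $v\in\mathcal{S}_c^\varphi$ is subharmonic ($\Delta v\ge n$ by AM--GM from $\det D^2 v\ge1$), extends $\varphi$ to a continuous $\overline\varphi\ge\varphi$ on $\partial\Omega$, and constructs a \emph{harmonic} $w^+$ on $B_{r_2}(0)\setminus\overline\Omega$ with $w^+=\overline\varphi$ on $\partial\Omega$ and $w^+=\max_{\partial B_{r_2}(0)}u$ on the outer ball (Proposition~\hyperlink{B}{B}, using the exterior cone condition satisfied by Lipschitz $\partial\Omega$). The outer boundary data is chosen \emph{a posteriori} from $u$ itself, so the matching is automatic, and the upper bound $\limsup_{x\to\xi}u(x)\le\varphi(\xi)$ follows. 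If you replace your concave quadratic by this harmonic comparison, your plan coincides with the paper's; as written, the upper-barrier step would not go through.

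A more minor discrepancy: you propose to obtain nonemptiness of the Perron family via smooth approximation of $\Omega,\varphi$ and an appeal to Theorem~\ref{thm:CL2003}. The paper instead uses explicit radial subsolutions $w_\alpha(x)=\inf_{\partial\Omega}\varphi+\int_{r_2}^{|x|}(s^n+\alpha)^{1/n}\,\mathrm{d}s$, which is more elementary and also supplies the precise asymptotic constant $\mu(\alpha)$ used in Proposition~\ref{lem:asym}. Your approximation route could in principle work, but it reintroduces a dependence on the $C^2$ theory you are trying to avoid and requires care to pass limits; the explicit subsolutions are cleaner.
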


If $\Omega$ is a bounded and strictly convex domain with $\partial\Omega\in C^2$, then $\Omega$ satisfies a uniform enclosing sphere condition 
(see Proposition \hyperlink{A}{A} in Appendix). The strict convexity of a $C^2$ domain $\Omega$ throughout the paper refers to Caffarelli--Li \cite{Caffarelli-Li-2003}. Namely, principal curvatures of $\partial\Omega$ are positive.
Also, if $\varphi\in C^2(\partial\Omega)$, then direct calculation shows that $\varphi$ is semi-convex with respect to $\partial\Omega$. Therefore, Theorem \ref{thm:main} is more general than Theorem \ref{thm:CL2003} of Caffarelli--Li \cite{Caffarelli-Li-2003} and Li--Lu \cite{Li-Lu}.

By combining Theorem \ref{thm:necessity}, we conclude the following necessary and sufficient condition to the existence
of solution of \eqref{eq:DiriProb} in terms of the boundary value.

\begin{theorem}\label{thm:NS2}
Let $\Omega$ be a bounded, strictly convex domain of $\mathbb{R}^n$, $n\geq3$, $\partial\Omega\in C^3$. Then for any $A\in\mathcal{A}$, $b\in\mathbb{R}^n$, there exists a constant $c$ such that \eqref{eq:DiriProb} has a viscosity solution in $C^0(\mathbb{R}^n\setminus\Omega)$ if and only if $\varphi$ is semi-convex with respect to $\partial\Omega$.
\end{theorem}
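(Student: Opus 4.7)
The plan is to deduce Theorem \ref{thm:NS2} as an immediate corollary of the two main results already established in the paper, namely Theorem \ref{thm:necessity} for the ``only if'' direction and Theorem \ref{thm:main} for the ``if'' direction. No new analytic ingredient is required; the proof is essentially a matter of matching the regularity and geometric hypotheses of the two theorems to those assumed here.

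For the necessity, I would fix $A\in\mathcal{A}$, $b\in\mathbb{R}^n$, and suppose that there exists $c\in\mathbb{R}$ for which \eqref{eq:DiriProb} admits a viscosity solution $u\in C^0(\mathbb{R}^n\setminus\Omega)$. Such a $u$ is in particular a viscosity solution of \eqref{eq:exteriorD}, because dropping the prescribed asymptotic profile at infinity does not affect either the equation in $\mathbb{R}^n\setminus\overline{\Omega}$ or the boundary condition on $\partial\Omega$. Strict convexity of $\Omega$ implies convexity, and the hypotheses $\partial\Omega\in C^3$ and $n\geq 3$ are given; hence Theorem \ref{thm:necessity} applies and forces $\varphi$ to be semi-convex with respect to $\partial\Omega$.

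For the sufficiency, I would assume $\varphi$ is semi-convex with respect to $\partial\Omega$ and verify the hypotheses of Theorem \ref{thm:main}. Since $\Omega$ is bounded, strictly convex and $\partial\Omega\in C^3\subset C^2$, Proposition A in the Appendix (quoted in the discussion following Theorem \ref{thm:main}) yields that $\Omega$ satisfies a uniform enclosing sphere condition; moreover $\partial\Omega\in C^3\subset C^1$ and $n\geq 3$. Theorem \ref{thm:main} then produces a constant $c_*$, depending only on $n$, $A$, $b$, $\Omega$, $\varphi$, such that \eqref{eq:DiriProb} admits a viscosity solution in $C^0(\mathbb{R}^n\setminus\Omega)$ for every $c\geq c_*$. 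Choosing, for instance, $c=c_*$ completes the argument.

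There is no genuine obstacle in this proof, since all the substantive work has been done in Theorems \ref{thm:necessity} and \ref{thm:main}. The only point worth highlighting is that the two main theorems have asymmetric hypotheses: Theorem \ref{thm:necessity} demands $C^3$ boundary regularity together with mere convexity, while Theorem \ref{thm:main} asks only for $C^1$ boundary together with a uniform enclosing sphere condition. The combined assumption of strict convexity and $C^3$ boundary made in Theorem \ref{thm:NS2} is precisely strong enough to feed both machines simultaneously, via Proposition A for the geometric side.
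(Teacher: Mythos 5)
Your proposal is correct and is exactly what the paper intends: the statement is announced as a direct consequence of combining Theorem~\ref{thm:necessity} (necessity, via viewing the solution of \eqref{eq:DiriProb} as a solution of \eqref{eq:exteriorD}) with Theorem~\ref{thm:main} (sufficiency, after invoking Proposition~A to pass from strict convexity and $C^2\subset C^3$ regularity to the uniform enclosing sphere condition). No further comment is needed.
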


We see from the above theorem that for the Monge--Amp\`{e}re equation, the semi-convex boundary value in the exterior Dirichlet problem is in the same position as the continuous boundary value in the interior Dirichlet problem.

We turn to point out that there exist many functions which satisfy the semi-convexity condition but $\varphi\notin C^2(\partial\Omega)$; see Example 1 in Appendix. Also, there exist many domains $\Omega$ which satisfy a uniform enclosing sphere condition and make Theorem \ref{thm:main} work but even not $C^1$; see Example 3. It would be interesting to see if Theorem \ref{thm:main} remains valid under weaker assumptions on $\partial\Omega$.
In addition, we mention that the ``uniform'' in a uniform enclosing sphere condition could not be dropped; see Example 2.

The constant $c_*$ in Theorem \ref{thm:main} and Theorem \ref{thm:CL2003} depends on different quantities of $\Omega$ and $\varphi$. Precisely, the former depends on $C^1$ regularity of $\partial\Omega$, the uniform radius of enclosing sphere of $\Omega$ and the semi-convexity of $\varphi$, while the latter depends on the diameter and the strict convexity of $\Omega$, the $C^2$ norm of $\partial\Omega$ and $\|\varphi\|_{C^2(\partial\Omega)}$.

We now comment the proof of Theorem \ref{thm:main}. The proof of existence part is based on Perron's method, and processes the equation, the boundary value and the asymptotic behavior in \eqref{eq:DiriProb} separately. Along the lines of Perron's method for Laplace equation, our proof is traditional and elementary. We first introduce the lifting function with respect to the Monge--Amp\`{e}re equation and prove that Perron's solution satisfies the Monge--Amp\`{e}re equation in the viscosity sense. We then verify that such solution also satisfies the asymptotic behavior at infinity when $c$ is sufficiently large. It is worth to note that Perron's solution satisfies the equation and the asymptotic behavior under fairly weak assumptions on domains and boundary values. To handle the interior boundary behavior, compared with \cite{Caffarelli-Li-2003}, key changes need to be made due to the weaker conditions by reproving the barrier lemma (see Lemma \ref{lem:barrier}). Once the existence part is established, we can follow almost without change as in \cite[Theorem 1.2]{Li-Lu} to obtain the nonexistence part.

The rest of this paper is organized as follows. In Section \ref{sec:necessity}, we establish Theorem \ref{thm:necessity} using an adapted smooth approximation argument. In Section \ref{sec:eq}, we prove that the Perron's solution is a viscosity solution of the Monge--Amp\`{e}re equation. In Section \ref{sec:asy}, we further check the Perron's solution satisfies the asymptotic quadratic behavior at infinity. In Section \ref{sec:bdy}, we investigate the boundary behavior of the Perron's solution. Section \ref{sec:main} is devoted to prove Theorem \ref{thm:main}.

We fix some notations throughout this paper. For $x=(x_1,\cdots,x_n)\in\mathbb{R}^n$, we denote $x'=(x_1,\cdots,x_{n-1})\in\mathbb{R}^{n-1}$. For $r>0$,
we denote by $B_r(x)$ the open ball in $\mathbb{R}^n$ centered at $x$ of radius $r$, and by $B'_r(x')$ the open ball in $\mathbb{R}^{n-1}$ centered at $x'$ of radius $r$. For a function $u(x)$, we denote by $u_{i}$ the partial derivative with respect to $x_i$, and by $u_{ij}$ the second derivative with respect to $x_i$ and $x_j$. We also use $Du$ and $D^2u$ to denote $(u_1,\cdots,u_{n})$ and the $n\times n$ matrix $(u_{ij})$, respectively. Similarly, for a function $v(x')$, we used
$D'v$ and $D'^2v$ to denote $(v_1,\cdots,v_{n-1})$ and the $(n-1)\times (n-1)$ matrix $(v_{ij})$, respectively.
\section{Proof of Theorem \ref{thm:necessity}}\label{sec:necessity}

For the reader's convenience, we recall that $u\in C^0(D)$ is locally convex in an open set $D\subset\mathbb{R}^n$ if for any $x\in D$, there is an open ball $B\subset D$ centered at $x$ such that the restriction of $u$ to $B$ is convex, which is equivalent to that $u$ is convex in any open ball $B\subset D$ (see for instance \cite{yan2014}).

In this section, we derive the necessity of semi-convexity condition of the boundary value in a very general setting through an adapted smooth approximation argument, which implies Theorem \ref{thm:necessity}.

\begin{prop}
  Let $\Omega$ be a bounded convex domain of $\mathbb{R}^n$, $n\geq3$, $\partial\Omega\in C^3$. Let $u\in C^{0}(\mathbb{R}^n\setminus\Omega)$ be locally convex, and $\varphi(x):=u(x)$ for $x\in\partial\Omega$. Then $\varphi$ is semi-convex with respect to $\partial\Omega$.
\end{prop}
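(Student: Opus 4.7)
The plan is to prove the proposition by a smooth approximation argument exploiting that, at every boundary point, the locally convex function $u$ extends to a genuinely convex function on a closed supporting half-space. Fix $\xi\in\partial\Omega$ and work in the local coordinate system at $\xi$, so that $\rho\in C^3(B'_\delta)$ satisfies $\rho(0')=0$, $D'\rho(0')=0$, $D'^2\rho\geq 0$. By convexity of $\Omega$, the hyperplane $\{x_n=0\}$ is a supporting hyperplane at $\xi$, hence $\{x_n\leq 0\}\subset\mathbb{R}^n\setminus\Omega$. Since $u$ is locally convex on the open half-space $\{x_n<0\}$, which is a convex subset of $\mathbb{R}^n\setminus\overline\Omega$, and continuous on its closure, $u$ is convex on the closed half-space $\{x_n\leq 0\}$; the same observation applied at every nearby boundary point shows that $u$ is convex along every downward vertical segment of the form $\{(x',s):s\leq \rho(x')\}$.

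Let $M_0:=\sup\{|u(y)|:y\in\mathbb{R}^n\setminus\Omega,\ \mathrm{dist}(y,\partial\Omega)\leq 1\}<\infty$ by compactness of $\partial\Omega$, and let $u_\epsilon=u*\eta_\epsilon$ denote the standard mollification. Then $u_\epsilon$ is smooth and convex on $\{y:\mathrm{dist}(y,\overline\Omega)>\epsilon\}$ and is uniformly bounded by $M_0$ on the tubular neighborhood. I would fix $c>0$ large enough (depending only on $\|D'\rho\|_{L^\infty(B'_\delta)}$) that for all $x'\in B'_{\delta/2}$ and all small $\epsilon$, the point $(x',\rho(x')-c\epsilon)$ lies at distance $\geq\epsilon$ from $\overline\Omega$, and define
\[
\psi_\epsilon(x'):=u_\epsilon\bigl(x',\rho(x')-c\epsilon\bigr),\qquad x'\in B'_{\delta/2}.
\]
A direct chain rule computation then yields
\[
\partial_{ij}\psi_\epsilon=v_i^{\top}D^2u_\epsilon\,v_j+(u_\epsilon)_n\,\rho_{ij},\qquad v_i:=e_i+\rho_i e_n,
\]
in which the first term is positive semi-definite (since $D^2u_\epsilon\geq 0$) and $\rho_{ij}\geq 0$ by convexity of $\rho$.

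The decisive ingredient is a lower bound on $(u_\epsilon)_n$ uniform in $\epsilon$. Fix $T>0$, say $T=1/2$, small enough that $(x',\rho(x')-c\epsilon-T)$ remains in the tubular neighborhood $\{\mathrm{dist}(\cdot,\partial\Omega)\leq 1\}$ for every $x'\in B'_{\delta/2}$ and small $\epsilon$. Convexity of the slice $s\mapsto u_\epsilon(x',s)$ then provides the chord inequality
\[
(u_\epsilon)_n\bigl(x',\rho(x')-c\epsilon\bigr)\;\geq\;\frac{u_\epsilon(x',\rho(x')-c\epsilon)-u_\epsilon(x',\rho(x')-c\epsilon-T)}{T}\;\geq\;-\frac{2M_0}{T}.
\]
Combined with $\|D'^2\rho\|_{L^\infty(B'_\delta)}<\infty$ and $\rho_{ij}\geq 0$, this gives $D'^2\psi_\epsilon\geq -KI$ on $B'_{\delta/2}$ with $K$ independent of $\epsilon$. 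Since $\psi_\epsilon(x')\to u(x',\rho(x'))=\psi(x')$ pointwise by continuity of $u$, and each function $\psi_\epsilon+\tfrac{K}{2}|x'|^2$ is convex, the pointwise limit $\psi+\tfrac{K}{2}|x'|^2$ is also convex, so $\psi$ is semi-convex at $\xi$. Since $\xi$ is arbitrary, $\varphi$ is semi-convex with respect to $\partial\Omega$.

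The main obstacle to overcome is that the inward normal derivative of $u$ at $\partial\Omega$ need not be bounded: the prototypical example $u(x)=-\sqrt{-x_n}$, convex on $\{x_n\leq 0\}$, has $u_n\to+\infty$ as $x_n\to 0^-$, so naive mollification estimates such as $|(u_\epsilon)_n|\leq\|u\|_\infty/\epsilon$ provide no $\epsilon$-uniform bound. The one-sided observation that saves the argument is that by convexity $(u_\epsilon)_n$ can blow up only to $+\infty$ and never to $-\infty$, and its lower bound is controlled by a chord slope at a \emph{fixed} depth $T$ below $\partial\Omega$, independent of $\epsilon$. This is precisely the correct sign to pair with $\rho_{ij}\geq 0$: a large positive $(u_\epsilon)_n$ is harmless, while the possible negative values are bounded uniformly, which is exactly what preserves semi-convexity through the passage to the limit $\epsilon\to 0$.
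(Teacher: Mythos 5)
Your proof is correct and reaches the conclusion by an argument of the same general flavor as the paper's (mollify, pull back to the tangent variable $x'$, obtain an $\varepsilon$-uniform lower bound on the pulled-back Hessian, pass to the limit), but the parametrization you choose is genuinely different and in fact cleaner. The paper transports $u^\varepsilon$ to the boundary graph through the \emph{inner normal} map $x\mapsto x-2\varepsilon\nu(x)$, and the resulting chain rule produces, beyond the benign terms $v_i^\top D^2u^\varepsilon v_j$ and $u^\varepsilon_n\rho_{ij}$, an extra $-2\varepsilon\bigl(\sum_k u^\varepsilon_k\mu^{(k)}_{ij}+u^\varepsilon_n\mu^{(n)}_{ij}\bigr)$ coming from the second derivatives of $\nu$. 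Controlling this forces two additional ingredients: the hypothesis $\partial\Omega\in C^3$ (to bound $\mu^{(k)}_{ij}$), and a separate estimate $\max_{\partial\Omega^{2\varepsilon}}\varepsilon|Du^\varepsilon|\leq C(n)\max|u|$ obtained by differentiating the mollifier kernel. You instead shift \emph{vertically}, $x'\mapsto(x',\rho(x')-c\varepsilon)$, and your chain rule $\partial_{ij}\psi_\varepsilon=v_i^\top D^2u_\varepsilon\,v_j+(u_\varepsilon)_n\rho_{ij}$ is exact with no curvature remainder; the only structural inputs are $D^2u_\varepsilon\geq 0$, $\rho_{ij}\geq 0$, and the one-sided chord bound on $(u_\varepsilon)_n$. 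This eliminates the paper's $\varepsilon|Du^\varepsilon|$ estimate entirely and, since you never differentiate $\nu$, needs only $\partial\Omega\in C^2$ rather than $C^3$ — a modest sharpening of the regularity hypothesis worth noting. The two places you should tighten the exposition are small: state explicitly that $u_\varepsilon$ is \emph{locally} convex (not convex) on $\{\mathrm{dist}(\cdot,\overline\Omega)>\varepsilon\}$, since that set is not convex, and justify the convergence $\psi_\varepsilon\to\psi$ by combining uniform continuity of $u$ on a fixed compact collar with the standard estimate $|u_\varepsilon(y)-u(y)|\leq\sup_{|z|\leq 1}|u(y-\varepsilon z)-u(y)|$, exactly as the paper does in its final limit step.
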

\begin{proof}
For $\varepsilon>0$, denote $\Omega^\varepsilon=\{x\in\mathbb{R}^n|\,\text{dist}(x,\Omega)<\varepsilon\}$. Define for $0<\varepsilon<1$,
$$u^\varepsilon(x)=\int_{B_1(0)}\eta(z)u(x-\varepsilon z)\mathrm{d}z\quad\text{for}~x\in\mathbb{R}^n\setminus\overline{\Omega^\varepsilon},$$
where $\eta\in C^\infty(\mathbb{R}^n)$ is the standard mollifier given by
\begin{equation*}
\eta(x)=
\begin{cases}
C_0\exp{\Big(\frac{1}{|x|^2-1}\Big)} & \quad\text{if}~|x|<1,\\
0 & \quad\text{if}~|x|\geq1,
\end{cases}
\end{equation*}
with the constant $C_0$ satisfying $\int_{\mathbb{R}^n}\eta(x)\mathrm{d}x=1$. Clearly, $u^\varepsilon\in C^\infty(\mathbb{R}^n\setminus\overline{\Omega^\varepsilon})$. It can be verified that $u^\varepsilon$ is locally convex in $\mathbb{R}^n\setminus\overline{\Omega^\varepsilon}$. Indeed, given a ball $B_r(x_0)\subset\mathbb{R}^n\setminus\overline{\Omega^\varepsilon}$, $x,y\in B_r(x_0)$ and $z\in B_1(0)$, we have
$$x-\varepsilon z,~y-\varepsilon z\in B_{r+\varepsilon}(x_0)\subset\mathbb{R}^n\setminus\overline{\Omega}.$$
It follows from the convexity of $u$ in $B_{r+\varepsilon}(x_0)$ and $\eta\geq0$ that for $0<t<1$,
\begin{equation*}
\begin{split}
u^\varepsilon(tx+(1-t)y) & =\int_{B_1(0)}\eta(z)u(tx+(1-t)y-\varepsilon z)\mathrm{d}z \\
 & =\int_{B_1(0)}\eta(z)u(t(x-\varepsilon z)+(1-t)(y-\varepsilon z))\mathrm{d}z \\
 & \leq t\int_{B_1(0)}\eta(z)u(x-\varepsilon z)\mathrm{d}z+(1-t)\int_{B_1(0)}\eta(z)u(y-\varepsilon z)\mathrm{d}z \\
 & =tu^\varepsilon(x)+(1-t)u^\varepsilon(y).
\end{split}
\end{equation*}
Thus we have proved $u^\varepsilon$ is locally convex in $\mathbb{R}^n\setminus\overline{\Omega^\varepsilon}$.

Fix $\xi\in\partial\Omega$ and assume $\partial\Omega$ can be locally represented by the graph of
$$x_n=\rho(x')\geq0\quad\text{for}\,|x'|<\delta(\xi),$$
for some $\delta(\xi)>0$.
Denote $\nu(x)=(\nu'(x),\nu^{(n)}(x))$ the unit inner normal vector of $\partial\Omega$ at $x=(x',\rho(x'))$. Since $\Omega$ is convex, we have for $x=(x',\rho(x'))$,
$$x-2\varepsilon\nu(x)=(x'-2\varepsilon\nu'(x',\rho(x')),\rho(x')-2\varepsilon\nu^{(n)}(x',\rho(x')))\in\partial\Omega^{2\varepsilon}.$$
Define
\begin{equation*}
\psi^\varepsilon(x')=u^\varepsilon(x'-2\varepsilon\mu'(x'),\rho(x')-2\varepsilon\mu^{(n)}(x'))\quad\text{for}\,|x'|<\delta(\xi),
\end{equation*}
where $\mu'(x')=(\mu^{(1)}(x'),\cdots,\mu^{(n-1)}(x'))=\nu'(x',\rho(x'))$, $\mu^{(n)}(x')=\nu^{(n)}(x',\rho(x'))$. Direct calculation shows for $i,j=1,\cdots,n-1$,
$$\psi^\varepsilon_i=\sum_{k=1}^{n-1}u^\varepsilon_k\left(\delta_{ik}-2\varepsilon\mu^{(k)}_i\right)+u^\varepsilon_n\left(\rho_i-2\varepsilon\mu^{(n)}_i\right),$$
and
\begin{equation*}
\begin{split}
\psi^\varepsilon_{ij} = &\, \sum_{k=1}^{n-1}\left(\left(\sum_{l=1}^{n-1}u^\varepsilon_{kl}\left(\delta_{jl}-2\varepsilon\mu^{(l)}_j\right)+u^\varepsilon_{kn}\left(\rho_j-2\varepsilon\mu^{(n)}_j\right)\right)
\left(\delta_{ik}-2\varepsilon\mu^{(k)}_i\right)+u^\varepsilon_k\left(-2\varepsilon\mu^{(k)}_{ij}\right)\right) \\
&\,+\left(\sum_{k=1}^{n-1}u^\varepsilon_{nk}\left(\delta_{kj}-2\varepsilon\mu^{(k)}_j\right)+u^\varepsilon_{nn}\left(\rho_j-2\varepsilon\mu^{(n)}_j\right)
\right)\left(\rho_i-2\varepsilon\mu^{(n)}_i\right)+u^\varepsilon_n\left(\rho_{ij}-2\varepsilon\mu^{(n)}_{ij}\right) \\
 = &\,\sum_{k,l=1}^{n-1}u^\varepsilon_{kl}\left(\delta_{ik}-2\varepsilon\mu^{(k)}_i\right)\left(\delta_{jl}-2\varepsilon\mu^{(l)}_j\right)\\
 &\,+\sum_{k=1}^{n-1}u^\varepsilon_{kn}\left(\delta_{ik}-2\varepsilon\mu^{(k)}_i\right)\left(\rho_j-2\varepsilon\mu^{(n)}_j\right)
 +\sum_{k=1}^{n-1}u^\varepsilon_{nk}\left(\delta_{kj}-2\varepsilon\mu^{(k)}_j\right)\left(\rho_i-2\varepsilon\mu^{(n)}_i\right) \\
 &\,+ u^\varepsilon_{nn}\left(\rho_i-2\varepsilon\mu^{(n)}_i\right)\left(\rho_j-2\varepsilon\mu^{(n)}_j\right)+u^\varepsilon_n\rho_{ij}
 -2\varepsilon\Bigg(\sum_{k=1}^{n-1}u^\varepsilon_k\mu^{(k)}_{ij}+u^\varepsilon_n\mu^{(n)}_{ij}\Bigg),
\end{split}
\end{equation*}
where $\delta_{ij}=1$ if $i=j$ and $\delta_{ij}=0$ if $i\neq j$. For $\zeta\in\mathbb{R}^{n-1}$, it follows that

\begin{equation*}
\begin{split}
\sum_{i,j=1}^{n-1}\psi^\varepsilon_{ij}\zeta_i\zeta_j= &\,\sum_{k,l=1}^{n-1}u^\varepsilon_{kl}\Bigg(\sum_{i=1}^{n-1}\left(\delta_{ik}-2\varepsilon\mu^{(k)}_i\right)\zeta_i\Bigg)
\Bigg(\sum_{j=1}^{n-1}\left(\delta_{jl}-2\varepsilon\mu^{(l)}_j\right)\zeta_j\Bigg) \\
&\,+2\sum_{k=1}^{n-1}u^\varepsilon_{kn}\Bigg(\sum_{i=1}^{n-1}\left(\delta_{ik}-2\varepsilon\mu^{(k)}_i\right)\zeta_i\Bigg)
\Bigg(\sum_{j=1}^{n-1}\left(\rho_j-2\varepsilon\mu^{(n)}_i\right)\zeta_j\Bigg)\\
 &\,+ u^\varepsilon_{nn}\Bigg(\sum_{i=1}^{n-1}\left(\rho_i-2\varepsilon\mu^{(n)}_i\right)\zeta_i\Bigg)
 \Bigg(\sum_{j=1}^{n-1}\left(\rho_j-2\varepsilon\mu^{(n)}_j\right)\zeta_j\Bigg) \\
 &\,+u^\varepsilon_n\sum_{i,j=1}^{n-1}\rho_{ij}\zeta_i\zeta_j
 -2\varepsilon\Bigg(\sum_{k=1}^{n-1}u^\varepsilon_k\sum_{i,j=1}^{n-1}\mu^{(k)}_{ij}\zeta_i\zeta_j+u^\varepsilon_n\sum_{i,j=1}^{n-1}\mu^{(n)}_{ij}\zeta_i\zeta_j\Bigg).
\end{split}
\end{equation*}
Since $D^2u^\varepsilon\geq0$, $D'^2\rho\geq0$ and $(\mu^{(1)}_{ij}),\cdots,(\mu^{(n)}_{ij})$ are bounded due to $\partial\Omega\in C^3$,
\begin{equation}\label{eq:psi-e}
\begin{split}
\sum_{i,j=1}^{n-1}\psi^\varepsilon_{ij}\zeta_i\zeta_j
 &\,\geq u^\varepsilon_n\sum_{i,j=1}^{n-1}\rho_{ij}\zeta_i\zeta_j
 -2\varepsilon\Bigg(\sum_{k=1}^{n-1}u^\varepsilon_k\sum_{i,j=1}^{n-1}\mu^{(k)}_{ij}\zeta_i\zeta_j+u^\varepsilon_n\sum_{i,j=1}^{n-1}\mu^{(n)}_{ij}\zeta_i\zeta_j\Bigg) \\
 &\,\geq C\left(\min\{\min_{\Gamma^{2\varepsilon}}u^\varepsilon_n,0\}-2\max_{\Gamma^{2\varepsilon}}\varepsilon|Du^\varepsilon|\right)|\zeta|^2,
\end{split}
\end{equation}
where $C>0$ is a constant depending only on $n$ and the $C^3$ norm of $\partial\Omega$, and
$$\Gamma^{2\varepsilon}=\{x-2\varepsilon v(x)|\,x=(x',\rho(x')),~|x'|\leq\delta(\xi)\}\subset\partial\Omega^{2\varepsilon}.$$

To prove that $\psi^\varepsilon$ is semi-convex in $B'_{\delta(\xi)}(0')$, we first estimate $u_n^\varepsilon$ on $\Gamma^{2\varepsilon}$ from below. Take $R>0$ such that $\Omega^3\subset B_R(0)$. For $x_0\in\Gamma^{2\varepsilon}$, we denote by $y_0$ the intersection of $l$ and $\partial B_{R}(0)$, where $l$ is the ray along the direction of negative $x_n$-axis with the endpoint $x_0$. Then the open line segment $\overline{x_0y_0}$ connecting $x_0$ and $y_0$ is contained in $\mathbb{R}^n\setminus\overline{\Omega^{\varepsilon}}$. Hence for
$$\hat{x}=\frac{|y_0-\hat{x}|}{|y_0-x_0|}x_0+\frac{|\hat{x}-x_0|}{y_0-x_0}y_0\in\overline{x_0y_0},$$
we have by the convexity of $u^\varepsilon$,
$$u^\varepsilon(\hat{x})\leq\frac{|y_0-\hat{x}|}{|y_0-x_0|}u^\varepsilon(x_0)+\frac{|\hat{x}-x_0|}{|y_0-x_0|}u^\varepsilon(y_0).$$
Together with $|y_0-x_0|\geq1$, we get
$$\frac{u^\varepsilon(\hat{x})-u^\varepsilon(x_0)}{|\hat{x}-x_0|}\leq\frac{u^\varepsilon(y_0)-u^\varepsilon(x_0)}{|y_0-x_0|}
\leq2\max_{\overline{B_{R}(0)\setminus\Omega^{2\varepsilon}}}|u^\varepsilon|.$$
The boundedness of $u$ on $\overline{B_{R+1}(0)\setminus\Omega}$ yields that
\begin{equation*}
|u^\varepsilon(x)|\leq\int_{B_1(0)}\eta(z)|u(x-\varepsilon z)|\mathrm{d}z\leq\max_{z\in\overline{B_1(0)}}|u(x-\varepsilon z)|
\end{equation*}
holds uniformly for $x\in\overline{B_R(0)\setminus\Omega^{2\varepsilon}}$. Then
$$\max_{\overline{B_{R}(0)\setminus\Omega^{2\varepsilon}}}|u^\varepsilon|
\leq\max_{\overline{B_{R+1}(0)\setminus\Omega}}|u|.$$
Hence,
$$\frac{u^\varepsilon(\hat{x})-u^\varepsilon(x_0)}{|\hat{x}-x_0|}
\leq2\max_{\overline{B_{R+1}(0)\setminus\Omega}}|u|.$$
That is,
\begin{equation}\label{infun-e}
\min_{\partial\Omega^{2\varepsilon}}u^\varepsilon_n\geq-2\max_{\overline{B_{R+1}(0)\setminus\Omega}}|u|.
\end{equation}

We continue to estimate $\varepsilon|Du^\varepsilon|$ on $\partial\Omega^{2\varepsilon}$ from above. It is clear that
$$u^\varepsilon(x)=\int_{\mathbb{R}^n\setminus\overline{\Omega}}\frac{1}{\varepsilon^n}\eta\bigg(\frac{x-z}{\varepsilon}\bigg)u(z)\mathrm{d}z
\quad\text{for}~x\in\mathbb{R}^n\setminus\overline{\Omega^\varepsilon}.$$
Then direct calculation gives
\begin{equation*}
\begin{split}
|u^\varepsilon_i(x)| &=\bigg|\int_{\mathbb{R}^n\setminus\overline{\Omega}}\frac{1}{\varepsilon^n}\frac{\partial\eta}{\partial x_i}\bigg(\frac{x-z}{\varepsilon}\bigg)u(z)\mathrm{d}z\bigg| \\
&=\frac{1}{\varepsilon}\bigg|\int_{B_1(0)}\eta_i(z)u(x-\varepsilon z)\mathrm{d}z\bigg| \\
&\leq\frac{1}{\varepsilon}\max_{z\in\overline{B_{1}(0)}}|u(x-\varepsilon z)|\int_{B_1(0)}|D\eta(z)|\mathrm{d}z \\
&=:\frac{C(n)}{\varepsilon}\max_{z\in\overline{B_{1}(0)}}|u(x-\varepsilon z)|.
\end{split}
\end{equation*}
Using the boundedness of $u$ in $\overline{\Omega^{3}\setminus\Omega}$, we obtain for $x_0\in\partial\Omega^{2\varepsilon}$,
$$|u^\varepsilon_i(x_0)|\leq\frac{C(n)}{\varepsilon}\max_{\overline{\Omega^{3}\setminus\Omega}}|u|.$$
That is,
\begin{equation}\label{eq:Du-e}
\max_{\partial\Omega^{2\varepsilon}}\varepsilon|Du^\varepsilon|\leq C(n)\max_{\overline{\Omega^{3}\setminus\Omega}}|u|.
\end{equation}

Combining \eqref{eq:psi-e} with \eqref{infun-e} and \eqref{eq:Du-e}, we obtain that there exists a constant $K(\xi)>0$ independent of $\varepsilon$ such that
$$D^2\psi^\varepsilon(x')\geq-KI_{n-1}\quad\text{for}~x'\in B'_{\delta(\xi)}(0'),$$
where $I_{n-1}$ is the $(n-1)\times(n-1)$ identity matrix. That is $\psi^\varepsilon(x')+\frac{K(\xi)}{2}|x'|^2$ is convex in $B'_{\delta(\xi)}(0')$. Then for $x',y'\in B'_{\delta(\xi)}(0')$ and $0<t<1$,
\begin{equation}\label{eq:psi-e-s}
\begin{split}
&\, \psi^\varepsilon(tx'+(1-t)y')+\frac{K(\xi)}{2}|tx'+(1-t)y'|^2 \\
\leq &\, t\bigg(\psi^\varepsilon(x')+\frac{K(\xi)}{2}|x'|^2\bigg)+(1-t)\bigg(\psi^\varepsilon(y')+\frac{K(\xi)}{2}|y'|^2\bigg).
\end{split}
\end{equation}
From the uniform continuity of $u$ in $\overline{\Omega^{3}\setminus\Omega}$, it follows that for $\psi(x'):=\varphi(x',\rho(x'))$,
\begin{equation*}
\begin{split}
 |\psi^\varepsilon(x')-\psi(x')|
= &\, |u^\varepsilon(x-2\varepsilon\nu(x))-u(x)| \\
\leq &\, \int_{B_1(0)}\eta(z)\left|u(x-2\varepsilon\nu(x)-\varepsilon z)-u(x)\right|\mathrm{d}z \\
\leq &\, \max_{z\in\overline{B_{1}(0)}}\left|u(x-2\varepsilon\nu(x)-\varepsilon z)-u(x)\right| \\
\to &\,0\quad\quad\text{as}~\varepsilon\to0
\end{split}
\end{equation*}
holds uniformly for $x=(x',\rho(x'))$ with $|x'|<\delta(\xi)$. By sending $\varepsilon\to0$ in \eqref{eq:psi-e-s}, we obtain for $x',y'\in B'_{\delta(\xi)}(0')$ and $0<t<1$,
\begin{equation*}
\begin{split}
&\, \psi(tx'+(1-t)y')+\frac{K(\xi)}{2}|tx'+(1-t)y'|^2 \\
\leq &\, t\bigg(\psi(x')+\frac{K(\xi)}{2}|x'|^2\bigg)+(1-t)\bigg(\psi(y')+\frac{K(\xi)}{2}|y'|^2\bigg).
\end{split}
\end{equation*}
Hence, $\psi$ is semi-convex in $B'_{\delta(\xi)}(0')$. Since $\xi\in\partial\Omega$ is arbitrary, the proof is completed.
\end{proof}

\section{Perron's solution of equation \eqref{eq0:MA=1} in the exterior domain}\label{sec:eq}

In this section, we give the expression of the Perron's solution of the exterior Dirichlet problem \eqref{eq:DiriProb}, under fairly general assumptions that  the domain $\Omega$ and the boundary value $\varphi$ are bounded. We will verify that the Perron's solution is indeed a viscosity solution of equation \eqref{eq0:MA=1} in the exterior domain $\mathbb{R}^n\setminus\overline{\Omega}$. The proof is similar in spirit to that of Laplace equation.

For the reader's convenience, we recall the definition of viscosity solutions. Let $D$ be an open set in $\mathbb{R}^n$ and let $u\in C^0(D)$ be a locally convex function. We say that $u$ is a viscosity subsolution of \eqref{eq0:MA=1} in $D$, if for any function $v\in C^2(D)$ and any local maximum point $x_0\in D$ of $u-v$,  we have
$$\det(D^2v(x_0))\geq1.$$
We say that $u$ is a viscosity supersolution of \eqref{eq0:MA=1} in $D$, if for any local convex function $v\in C^2(D)$ and any local minimum point $x_0\in D$ of $u-v$, we have
$$\det(D^2v(x_0))\leq1.$$
$u$ is a viscosity solution of \eqref{eq0:MA=1}, if it is both a viscosity subsolution and a viscosity supersolution of \eqref{eq0:MA=1}. It is clear from the definition that if $u$ is a viscosity solution of \eqref{eq0:MA=1} in each ball $B\subset\subset D$, then $u$ is a viscosity solution of \eqref{eq0:MA=1} in $D$.

To begin with, we introduce the definition of the lifting function with respect to the Monge--Amp\`{e}re equation.

\begin{definition}\label{defn:lifting}
Let $B\subset\subset D$ be an open ball, and $u\in C^0(D)$.
We define the lifting function of $u$ with respect to the Monge--Amp\`{e}re equation \eqref{eq0:MA=1} in $B$ by
\begin{equation*}
U(x)=
\begin{cases}
  \overline{u}(x) &\quad\text{if}~x\in B, \\
  u(x) &\quad\text{if}~x\in D\setminus B,
\end{cases}
\end{equation*}
where $\overline{u}\in C^0(\overline{B})$ is convex in $B$ solving
\begin{equation*}
\begin{cases}
\det(D^2\overline{u})=1 & \text{in }B,\\
\overline{u}=u & \text{on }\partial B.
\end{cases}
\end{equation*}
The existence of $\overline{u}$ is guaranteed by \cite[Theorem 4.1]{Rauch-Taylor1977} (see also \cite[Lemma A.3]{Caffarelli-Li-2003} for a direct proof).
\end{definition}

\begin{prop}\label{prop:lifting}
Let $u$ be a viscosity subsolution of \eqref{eq0:MA=1} in $D$, and let $U$ be the lifting function of $u$ with respect to \eqref{eq0:MA=1} in $B$. Then $U\geq u$ in $D$, and $U$ is also a viscosity subsolution of \eqref{eq0:MA=1} in $D$.
\end{prop}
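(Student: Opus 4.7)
The plan is to establish $U \geq u$, then check that $U$ is locally convex on $D$, and finally verify the viscosity subsolution inequality. For $U \geq u$, the inequality is trivial on $D \setminus B$ since $U = u$ there. On $\overline{B}$ one has $\overline{u} = u$ on $\partial B$, $\overline{u}$ is the viscosity solution of the Monge--Amp\`ere equation provided by the existence result cited after Definition \ref{defn:lifting}, and $u$ is a viscosity subsolution in $B$; the standard comparison principle for the Monge--Amp\`ere equation then gives $u \leq \overline{u}$ in $B$, so $U \geq u$ on all of $D$.

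For local convexity, the cases $x_0 \in B$ (where $U = \overline{u}$ is convex) and $x_0 \in D \setminus \overline{B}$ (where $U = u$ is locally convex) are immediate, so it suffices to handle $x_0 \in \partial B$. I pick a ball $B_r(x_0) \subset D$ small enough that $u$ is convex on it, fix $x, y \in B_r(x_0)$, and study the univariate function $f(s) = U(x + s(y-x))$ for $s \in [0,1]$. Since $B$ is a ball, the segment meets $\overline{B}$ in an interval $[s_1, s_2] \subset [0,1]$ on which $f$ equals $\overline{u}$ restricted to the segment, while on $[0, s_1]$ and $[s_2, 1]$ it equals $u$ restricted to the segment; each piece is convex. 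At the break point $s_1$ the right one-sided slope of $\overline{u} - u$ along the segment is nonnegative, because $\overline{u} - u \geq 0$ on $B$ and vanishes at $\partial B$; combined with convexity of $u$ along the segment this yields $f'(s_1^-) \leq f'(s_1^+)$. A symmetric argument at $s_2$ completes the proof that $f$ is convex on $[0,1]$, whence $U$ is convex on $B_r(x_0)$.

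For the subsolution property, take any $v \in C^2(D)$ and a local maximum $x_0$ of $U - v$. If $x_0 \in B$ then $U = \overline{u}$ near $x_0$ and $\overline{u}$ solves the Monge--Amp\`ere equation in $B$, so $\det(D^2 v(x_0)) \geq 1$. If $x_0 \in D \setminus \overline{B}$ then $U = u$ near $x_0$ and the inequality follows from $u$ being a subsolution. If $x_0 \in \partial B$, the sandwich $u(x) \leq U(x)$ with equality $u(x_0) = \overline{u}(x_0) = U(x_0)$ implies $u(x) - v(x) \leq U(x) - v(x) \leq U(x_0) - v(x_0) = u(x_0) - v(x_0)$ in a neighborhood of $x_0$, so $u - v$ also attains a local maximum at $x_0$, and again $\det(D^2 v(x_0)) \geq 1$. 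The main technical point is the convexity check at $\partial B$: without it the notion of viscosity subsolution is not even well defined for $U$, and it is precisely where the one-sided structure $\overline{u} \geq u$ in $B$ with $\overline{u} = u$ on $\partial B$ enters decisively.
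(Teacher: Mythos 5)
Your proof is correct, and both parts differ mildly from the paper's route. For local convexity the paper works directly with the two-point inequality: it reduces to $x\in B_0\cap B$, $y\in B_0\setminus B$, introduces the auxiliary point $\hat z = L_{xy}\cap\partial B$, and plays the convexity of $\overline u$ in $B$ against $\overline u\geq u$ to push the convexity inequality through; you instead restrict to the segment, observe that the one-dimensional trace of $U$ is convex on each of the three subintervals, and match one-sided slopes at the break points $s_1,s_2$ using the sandwich $\overline u\geq u$ with equality on $\partial B$ together with the convexity of $u$ along the segment. Your slope-matching argument is a clean and standard way to glue piecewise-convex functions and is logically equivalent to what the paper does, just phrased in one variable. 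For the subsolution property the paper invokes Lemma \ref{lem:split} (a general patching lemma quoted from Caffarelli--Cabr\'e), while you verify the definition directly by splitting on whether the touching point lies in $B$, in $D\setminus\overline B$, or on $\partial B$, the last case handled by the sandwich $u\leq U$ with $u=U$ on $\partial B$. Your direct verification essentially unwinds the proof of that lemma, so it is more self-contained, at the cost of not producing the reusable statement which the paper later applies again in Proposition \ref{lem:perron0}. Both routes are sound; yours is arguably the more elementary one, the paper's the more modular one.
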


\begin{proof}
  The comparison principle gives $\overline{u}\geq u$ in $B$, and so $U\geq u$ in $D$.

  We claim that $U$ is locally convex in $D$. Indeed, it suffices to prove that $U$ is convex in an open ball $B_0\subset D$ centered at $x_0\in\partial B$. For any fixed $x,y\in B_0$, we only need to consider the case of $x\in B_0\cap B$ and $y\in B_0\setminus B$, as otherwise the conclusion follows immediately.

  \noindent
  \textbf{Case 1.} For any $0<t<1$ such that $tx+(1-t)y\in B_0\setminus B$, by the convexity of $u$ in $B_0$, we have
  \begin{equation*}
    \begin{split}
       U(tx+(1-t)y) & =u(tx+(1-t)y) \\
         & \leq tu(x)+(1-t)u(y) \\
         & \leq tU(x)+(1-t)U(y).
    \end{split}
  \end{equation*}
  \textbf{Case 2.} For any $0<t<1$ such that $z:=tx+(1-t)y\in B_0\cap B$, let $\hat{z}=L_{xy}\cap\partial B$, where $L_{xy}$ is the line segment connecting $x$ and $y$. Then
  $z=\hat{t}x+(1-\hat{t})\hat{z}$ for some $0<\hat{t}<1$. By $|x-\hat{z}|\geq|z-\hat{z}|$, we have
  $$t=\frac{|z-y|}{|x-y|}=\frac{|z-\hat{z}|+|\hat{z}-y|}{|x-\hat{z}|+|\hat{z}-y|}\geq\frac{|z-\hat{z}|}{|x-\hat{z}|}=\hat{t}.$$
  Since $\overline{u}$ is convex in $B$ and $\overline{u}\geq u$ in $B$, it follows that
  \begin{equation*}
    \begin{split}
       U(tx+(1-t)y) & =\overline{u}(z) \\
         & \leq\hat{t}\overline{u}(x)+(1-\hat{t})\overline{u}(\hat{z}) \\
         & =\hat{t}\overline{u}(x)+(1-\hat{t})u\bigg(\frac{t-\hat{t}}{1-\hat{t}}x+\frac{1-t}{1-\hat{t}}y\bigg) \\
         & \leq\hat{t}\overline{u}(x)+(t-\hat{t})u(x)+(1-t)u(y) \\
         & \leq t\overline{u}(x)+(1-t)u(y) \\
         & \leq tU(x)+(1-t)U(y).
    \end{split}
  \end{equation*}

  By Lemma \ref{lem:split} below, $U$ is a viscosity subsolution of \eqref{eq0:MA=1} in $D$, and the proof is finished.
\end{proof}

We collect some preliminary lemmas for the viscosity solutions, which will be used in the proof of this section.

\begin{lemma}\label{lem:split}
  Let $D$ and $D_1$ are two domains such that $D\subset\subset D_1$ and $D$ is bounded. Suppose that $u\in C(D_1)$ is a viscosity subsolution of \eqref{eq0:MA=1} in $D_1$, and $v\in C(\overline{D})$ is a viscosity subsolution of \eqref{eq0:MA=1} in $D$. Assume that $v\leq u$ on $\partial D$. If
  \begin{equation*}
  w=
  \begin{cases}
    \max\{u,v\} & \mbox{in } \overline{D}, \\
    u & \mbox{in } D_1\setminus D, \\
  \end{cases}
  \end{equation*}
  is locally convex in $D_1$, then $w$ is a viscosity subsolution of \eqref{eq0:MA=1} in $D_1$.
\end{lemma}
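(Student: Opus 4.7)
\textbf{Proof plan for Lemma \ref{lem:split}.}

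The plan is the classical ``gluing'' argument for viscosity subsolutions, carried out case-by-case on where a test function touches $w$. Local convexity of $w$ in $D_1$ is given, so $w$ satisfies the preliminary regularity required by the definition; what remains is to verify the differential inequality. Fix any $\phi \in C^{2}(D_1)$ and any local maximum point $x_0 \in D_1$ of $w - \phi$. After subtracting a constant we may assume $w(x_0) = \phi(x_0)$, so $w \leq \phi$ in a neighborhood of $x_0$. The goal is to show $\det(D^{2}\phi(x_0)) \geq 1$.

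I will split into three cases according to the location of $x_0$.

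\textbf{Case (i): $x_0 \in D_1 \setminus \overline{D}$.} Here $w \equiv u$ in a small ball around $x_0$, so $u - \phi$ attains a local maximum at $x_0$ and the desired inequality follows directly from the fact that $u$ is a viscosity subsolution in $D_1$.

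\textbf{Case (ii): $x_0 \in \partial D$.} Because $v \leq u$ on $\partial D$, we have $w(x_0) = \max\{u(x_0), v(x_0)\} = u(x_0)$. Moreover $w \geq u$ in a neighborhood of $x_0$ in $D_1$ (this is immediate on $\overline{D}$ since $w = \max\{u,v\}$ there, and trivial outside $\overline{D}$ since $w = u$). Hence in a neighborhood of $x_0$,
\[
u - \phi \;\leq\; w - \phi \;\leq\; w(x_0) - \phi(x_0) \;=\; u(x_0) - \phi(x_0),
\]
so $u - \phi$ has a local maximum at $x_0 \in D_1$, and the subsolution property of $u$ in $D_1$ gives $\det(D^{2}\phi(x_0)) \geq 1$.

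\textbf{Case (iii): $x_0 \in D$.} Here $w = \max\{u,v\}$ in a neighborhood of $x_0$, so either $w(x_0) = u(x_0)$ or $w(x_0) = v(x_0)$. In the first subcase, the inequalities $u \leq w \leq \phi$ near $x_0$ with equality at $x_0$ show $u - \phi$ has a local max at $x_0$, and we conclude from the subsolution property of $u$ in $D_1$. In the second subcase, the analogous inequalities $v \leq w \leq \phi$ near $x_0$ (valid in $D$) with equality at $x_0 \in D$ show $v - \phi$ has a local max at $x_0$, and the subsolution property of $v$ in $D$ yields $\det(D^{2}\phi(x_0)) \geq 1$.

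The three cases exhaust $D_1$, so $w$ is a viscosity subsolution in $D_1$. I do not anticipate a serious obstacle: the only delicate point is Case (ii), where the hypothesis $v \leq u$ on $\partial D$ is exactly what guarantees that the ``touching from above'' at a boundary point of $D$ reduces to testing against $u$ in the larger domain $D_1$ (rather than against $v$, which is only a subsolution in the smaller set $D$ and has no prescribed behavior outside).
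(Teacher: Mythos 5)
Your proof is correct and follows the standard gluing argument for viscosity subsolutions, which is exactly the approach the paper invokes by citing Caffarelli--Cabr\'e, Proposition 2.8 (the paper itself only gives the reference, not a written-out argument). The case split on $x_0 \in D_1\setminus\overline{D}$, $x_0\in\partial D$, and $x_0\in D$, together with the observation that $w\geq u$ throughout $D_1$ (which hinges on $v\leq u$ on $\partial D$ for consistency of the definition of $w$), is precisely the delicate point, and you handled it correctly.
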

\begin{proof}
  This lemma can be deduced by following the arguments in the proof of \cite[Proposition 2.8]{Caffarelli-Cabre-1995}, where the property is proved for viscosity subsolutions of uniformly elliptic fully nonlinear equations.
\end{proof}

\begin{lemma}\label{lem:max-vissub}
Let $\mathcal{S}$ be a nonempty family of viscosity subsolution of \eqref{eq0:MA=1} in $D$. Define
$$u(x)=\sup\{v(x)|~v\in\mathcal{S}\}\quad\text{for}~x\in D.$$
If $u(x)<\infty$ for $x\in D$. Then $u$ is a viscosity subsolution of \eqref{eq0:MA=1} in $D$.
\end{lemma}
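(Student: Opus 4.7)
The plan is to verify the two ingredients that the paper's definition demands of a viscosity subsolution of \eqref{eq0:MA=1}: first that $u\in C^{0}(D)$ and is locally convex, then that it passes the test against $C^{2}$ functions that touch it from above.

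For the regularity, I would rely on the equivalence recalled at the start of Section~\ref{sec:eq}: a continuous function is locally convex in $D$ if and only if it is convex on every ball $B\subset D$. Since each $v\in\mathcal{S}$ is convex on every such ball, the pointwise supremum $u$ is convex on every ball as well (the supremum of a family of convex functions is convex, provided it is finite, which is exactly our hypothesis $u(x)<\infty$). A finite convex function on an open ball is automatically continuous, so $u\in C^{0}(D)$ and $u$ is locally convex.

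For the viscosity subsolution property, let $v\in C^{2}(D)$ and suppose $u-v$ attains a local maximum at $x_{0}\in D$. To transfer this information to members of $\mathcal{S}$, I would first replace $v$ by $v_{\epsilon}(x):=v(x)+\epsilon|x-x_{0}|^{2}$ for a small $\epsilon>0$, so that $u-v_{\epsilon}$ has a \emph{strict} local maximum at $x_{0}$ on some ball $B_{r}(x_{0})\subset\subset D$. Then choose a sequence $v_{k}\in\mathcal{S}$ with $v_{k}(x_{0})\to u(x_{0})$. Using $v_{k}\leq u$ together with the strict-maximum gap of $u-v_{\epsilon}$ on $\partial B_{r}(x_{0})$, one sees that for all sufficiently large $k$ the continuous function $v_{k}-v_{\epsilon}$ attains its maximum over $\overline{B_{r}(x_{0})}$ at an interior point $x_{k}$. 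Since $v_{k}$ is a viscosity subsolution of \eqref{eq0:MA=1}, this yields $\det(D^{2}v_{\epsilon}(x_{k}))\geq 1$.

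The remaining step is to pull the interior maxima $x_{k}$ back to $x_{0}$. From $(u-v_{\epsilon})(x_{k})\geq(v_{k}-v_{\epsilon})(x_{k})\geq(v_{k}-v_{\epsilon})(x_{0})$ and $v_{k}(x_{0})\to u(x_{0})$, any subsequential limit $x^{*}\in\overline{B_{r}(x_{0})}$ satisfies $(u-v_{\epsilon})(x^{*})\geq(u-v_{\epsilon})(x_{0})$, so the strict maximum forces $x^{*}=x_{0}$; hence $x_{k}\to x_{0}$. Continuity of $D^{2}v_{\epsilon}$ then gives $\det(D^{2}v_{\epsilon}(x_{0}))\geq 1$, and letting $\epsilon\to 0$ produces $\det(D^{2}v(x_{0}))\geq 1$. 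The only mildly delicate point is the convergence $x_{k}\to x_{0}$, which is exactly why the strict-maximum perturbation by $\epsilon|x-x_{0}|^{2}$ is inserted; I do not anticipate any other obstacle.
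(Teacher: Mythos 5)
Your proof is correct and follows the same standard argument that the paper defers to \cite[Lemma 4.2]{Crandall-Ishii-1992}: establish local convexity (hence continuity) of the supremum, then use the strict-maximum perturbation $v_\epsilon = v + \epsilon|x-x_0|^2$ together with a maximizing sequence $v_k\in\mathcal{S}$ to produce interior maxima $x_k$ of $v_k-v_\epsilon$ converging to $x_0$. You have simply made explicit the argument the paper cites, and the same device appears in the paper's own proof of the stability result (Lemma \ref{lem:stablity}).
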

\begin{proof}
Since $v\in\mathcal{S}$ is locally convex in $D$, $u\in C^0(D)$ is locally convex. The rest of the proof follows from that of \text{\cite[Lemma 4.2]{Crandall-Ishii-1992}}.
\end{proof}

\begin{lemma}\label{lem:stablity}
  Assume that $\{u_k\}$ is a family of viscosity solution of \eqref{eq0:MA=1} in $D$ and $u_k\to u$ locally uniformly in $D$ as $k\to\infty$. Then $u$ is a viscosity solution of \eqref{eq0:MA=1} in $D$.
\end{lemma}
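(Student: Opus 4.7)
The plan is to run the standard Crandall--Ishii stability argument for viscosity solutions, adapted to the Monge--Amp\`ere framework where the test functions for supersolutions are themselves required to be locally convex. Three things need to be verified: (a) the limit $u$ is locally convex in $D$; (b) $u$ is a viscosity subsolution of \eqref{eq0:MA=1}; and (c) $u$ is a viscosity supersolution of \eqref{eq0:MA=1}.

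For (a), since every $u_k$ is locally convex (as a viscosity solution, by the definition recalled earlier in this section) and local uniform convergence preserves the convexity inequality $u_k(tx+(1-t)y)\leq tu_k(x)+(1-t)u_k(y)$ on each ball $B\subset\subset D$ in the limit, the function $u\in C^0(D)$ is locally convex.

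For (b), fix $v\in C^2(D)$ and a local maximum $x_0\in D$ of $u-v$. To promote this to a strict local maximum I would introduce the perturbation $v_\varepsilon(x):=v(x)+\varepsilon|x-x_0|^2$ for small $\varepsilon>0$; on a closed ball $\overline{B_r(x_0)}\subset D$ on which $u-v$ achieves its maximum at $x_0$, the function $u-v_\varepsilon$ attains a strict maximum there. Choosing $x_k\in\overline{B_r(x_0)}$ that maximizes $u_k-v_\varepsilon$, local uniform convergence together with the strictness of the limiting maximum forces $x_k\to x_0$, so for $k$ large $x_k$ is an interior local maximum point of $u_k-v_\varepsilon$. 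The subsolution property of $u_k$ then yields $\det\bigl(D^2 v(x_k)+2\varepsilon I\bigr)\geq 1$, and sending $k\to\infty$ followed by $\varepsilon\to 0$ gives $\det(D^2 v(x_0))\geq 1$.

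The main obstacle is (c), because subtracting a quadratic to make a local minimum strict can destroy the convexity of the test function required by the definition of supersolution. Let $v\in C^2(D)$ be locally convex and let $x_0\in D$ be a local minimum of $u-v$. If $D^2 v(x_0)$ is degenerate then $\det(D^2 v(x_0))=0\leq 1$ already. Otherwise $D^2 v(x_0)>0$, so by continuity there exist $r>0$ and $\lambda>0$ with $D^2 v\geq\lambda I$ on $B_r(x_0)\subset D$. For $\varepsilon\in(0,\lambda/2)$ the perturbation $v_\varepsilon(x):=v(x)-\varepsilon|x-x_0|^2$ satisfies $D^2 v_\varepsilon\geq(\lambda-2\varepsilon)I>0$ on $B_r(x_0)$, so it remains an admissible locally convex $C^2$ test function there, while $u-v_\varepsilon$ attains a strict local minimum at $x_0$. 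The same convergence argument as in (b), applied to minimizers $x_k\in\overline{B_r(x_0)}$ of $u_k-v_\varepsilon$, yields $x_k\to x_0$, and the supersolution property of $u_k$ gives $\det\bigl(D^2 v(x_k)-2\varepsilon I\bigr)\leq 1$; passing to the limit in $k$ and then in $\varepsilon$ produces $\det(D^2 v(x_0))\leq 1$, which completes the proof.
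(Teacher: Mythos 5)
Your proof is correct and follows the same core mechanism as the paper: perturb the test function to make the extremum strict, use the local uniform convergence of $u_k$ to show the corresponding extrema $x_k$ of $u_k - v_\varepsilon$ converge to $x_0$, apply the sub/supersolution inequality at $x_k$, and pass to the limit in $k$ and then in $\varepsilon$. The subsolution half is essentially identical to the paper's. The difference is in the supersolution half: the paper simply writes ``Similar arguments leads to $u$ being a supersolution,'' whereas you correctly notice that this is not quite automatic, because the downward perturbation $v_\varepsilon(x)=v(x)-\varepsilon|x-x_0|^2$ may fail to be locally convex and hence may not be an admissible test function under the paper's definition of viscosity supersolution. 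Your fix — dispose of the case $\det(D^2 v(x_0))=0$ outright (since the target inequality $\det(D^2 v(x_0))\leq 1$ is then trivial), and otherwise use $D^2 v(x_0)>0$ together with continuity to get a uniform lower bound $D^2 v\geq\lambda I$ near $x_0$ so that $v_\varepsilon$ stays convex for $\varepsilon<\lambda/2$ — is exactly what is needed to make the ``similar argument'' rigorous. So while you take the same route, your write-up is actually more complete than the paper's in the one place the paper is vague.
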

\begin{proof}
This property was also mentioned by \cite[Proposition 2.9]{Caffarelli-Cabre-1995} and \cite[Proposition 2.1]{Ishii-1989}, while the proofs therein are omitted. Also, the definition of viscosity solution of \eqref{eq0:MA=1} is a bit different from that of \cite{Caffarelli-Cabre-1995} and \cite{Ishii-1989}.
Hence we give the proof for complements.

Clearly, $u$ is locally convex in $D$. Let $v\in C^2(D)$ and $x_0\in D$ be a local maximum point of $u-v$. Then $x_0$ is a strictly maximum point of $u-v_\varepsilon$, where $v_\varepsilon(x):=v(x)+\varepsilon|x-x_0|^2$, $\varepsilon>0$. Namely, for some $\delta>0$ with $\overline{B_\delta(x_0)}\subset D$,
$$u(x)-v_\varepsilon(x)<u(x_0)-v_\varepsilon(x_0),\quad\forall~0<|x-x_0|\leq\delta.$$
Let $x_k$ be the maximum point of $u_k-v_\varepsilon$ in $\overline{B_\delta(x_0)}\subset D$. We claim that $x_k\to x_0$ as $k\to\infty$. Indeed, we need to prove that for $k$ sufficiently large, $|x_k-x_0|\leq\frac{1}{k}$.
For $x\in \overline{B_\delta(x_0)}\setminus B_{\frac{1}{k}}(x_0)$, the locally uniform convergence of $\{u_k\}$ implies that, for $k$ sufficiently large,
\begin{equation*}
\begin{split}
u_k(x)-v_\varepsilon(x)& <u(x)-v_\varepsilon(x)+\sigma<u(x_0)-v_\varepsilon(x_0)-\sigma\\
& <u_k(x_0)-v_\varepsilon(x_0)\leq u_k(x_k)-v_\varepsilon(x_k),
\end{split}
\end{equation*}
where
$$\sigma=\frac{1}{4}\min_{\overline{B_\delta(x_0)}\setminus B_{\frac{1}{k}}(x_0)}(u(x_0)-v_\varepsilon(x_0)-(u-v_\varepsilon))>0.$$
This gives $x_k\in\overline{B_{\frac{1}{k}}(x_0)}$. Thus,
$$\det(D^2v_\varepsilon(x_k))\geq1.$$
Letting $\varepsilon\to0$ and $k\to\infty$, we have
$$\det(D^2v(x_0))\geq1.$$
Hence, $u$ is a viscosity subsolution of \eqref{eq0:MA=1} in $D$. Similar arguments leads to $u$ being a supersolution.
\end{proof}

With these preliminaries, we are now ready to deal with the Perron's solution. For a bounded open set $\Omega$ of $\mathbb{R}^n$, a function $\varphi$ defined on $\partial\Omega$ and a constant $c$, we begin with the special case of $A=I$ and $b=0$, and denote the set of subfunctions by
\begin{equation*}
\begin{split}
\mathcal{S}_c^{\varphi}=\bigg\{& v\in C^0(\mathbb{R}^n\setminus \Omega)|\ v\text{ is a viscosity subsolution of }\det(D^2v)=1~\text{in }\mathbb{R}^n\setminus\overline{\Omega}, \\
&\ v\leq\varphi\text{ on }\partial\Omega,\ \limsup_{|x|\to\infty}\bigg(v(x)-\frac{1}{2}|x|^2\bigg)\leq c\bigg\}.
\end{split}
\end{equation*}
One basic result is that the Perron's solution satisfies the Monge--Amp\`{e}re equation in the exterior domain in the viscosity sense.
\begin{prop}\label{lem:det=1}
  Let $\Omega$ be a bounded open set of $\mathbb{R}^n$, $n\geq3$, and let $\varphi$ be a bounded function on $\partial\Omega$. Then for every constant $c\geq\sup_{x\in\partial\Omega}\left(\varphi(x)-\frac{1}{2}|x|^2\right)$, the function
  \begin{equation}\label{eq:perron-sol}
  u(x)=\sup\{v(x)|\ v\in\mathcal{S}_c^{\varphi}\}\quad\text{for }x\in\mathbb{R}^n\setminus\overline{\Omega}
  \end{equation}
  is a viscosity solution of \eqref{eq0:MA=1} in $\mathbb{R}^n\setminus\overline{\Omega}$.
\end{prop}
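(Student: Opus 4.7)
The plan is to adapt Perron's method to the Monge--Ampère setting, with the lifting function of Definition \ref{defn:lifting} and Proposition \ref{prop:lifting} replacing harmonic lifting in the classical scheme.

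First I would verify that $u$ is well defined and continuous. Non-emptiness of $\mathcal{S}_c^\varphi$ is automatic: setting $c_0:=\inf_{\partial\Omega}(\varphi-\frac{1}{2}|\cdot|^2)$, which is finite by boundedness of $\Omega$ and $\varphi$, the paraboloid $v_0(x):=\frac{1}{2}|x|^2+c_0$ is a classical solution of $\det D^2v_0=1$, satisfies $v_0\le\varphi$ on $\partial\Omega$, and has asymptotic constant $c_0\le\sup_{\partial\Omega}(\varphi-\frac{1}{2}|\cdot|^2)\le c$. To bound $u$ from above I would set $c_1:=\max\{c,\sup_{\partial\Omega}(\varphi-\frac{1}{2}|\cdot|^2)\}$ and compare any $v\in\mathcal{S}_c^\varphi$ with the classical solution $w(x):=\frac{1}{2}|x|^2+c_1$: on $\partial\Omega$ one has $v\le\varphi\le w$, while the limsup condition gives $v\le w+\varepsilon$ on $\partial B_R(0)$ for every $\varepsilon>0$ and all $R$ sufficiently large. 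The viscosity comparison principle on the bounded annulus $B_R(0)\setminus\overline\Omega$, together with $R\to\infty$ and $\varepsilon\to0$, yields $v\le w$, hence $u\le w$. Since the pointwise supremum of a family of locally convex functions, when finite, is again locally convex, $u$ is locally convex, hence locally Lipschitz and continuous in $\mathbb{R}^n\setminus\overline\Omega$. Lemma \ref{lem:max-vissub} then gives that $u$ is a viscosity subsolution of \eqref{eq0:MA=1}.

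For the supersolution property I would use the lifting argument rather than a direct bump construction. Fix any open ball $B\subset\subset\mathbb{R}^n\setminus\overline\Omega$ and let $U$ be the lifting of $u$ with respect to \eqref{eq0:MA=1} in $B$. By Proposition \ref{prop:lifting}, $U\ge u$ in $\mathbb{R}^n\setminus\overline\Omega$ and $U$ is still a viscosity subsolution of \eqref{eq0:MA=1}. Since $U\equiv u$ off $B$ and $B$ lies a positive distance from both $\partial\Omega$ and infinity, $U$ inherits $U\le\varphi$ on $\partial\Omega$ and $\limsup_{|x|\to\infty}(U-\frac{1}{2}|x|^2)\le c$, so $U\in\mathcal{S}_c^\varphi$. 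The defining supremum then forces $u\ge U$; combined with $U\ge u$, I conclude $u\equiv U$ in $\mathbb{R}^n\setminus\overline\Omega$, so $u$ coincides on $\overline B$ with the Dirichlet solution $\overline u$ of $\det D^2\overline u=1$ in $B$ supplied by Rauch--Taylor, which is itself a viscosity solution in $B$. Hence $u$ is a viscosity solution on every ball $B\subset\subset\mathbb{R}^n\setminus\overline\Omega$, and by the observation recorded just before Definition \ref{defn:lifting}, on all of $\mathbb{R}^n\setminus\overline\Omega$.

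The main obstacle I anticipate is the upper bound step of paragraph two: since the asymptotic hypothesis is only a limsup on an unbounded exterior domain, the comparison principle must be applied on an exhausting sequence of annuli with an $\varepsilon$-correction at infinity and then passed to the limit. Once this is in place, the remainder of the argument is essentially formal, relying on Proposition \ref{prop:lifting} together with the Rauch--Taylor existence theorem cited in Definition \ref{defn:lifting}.
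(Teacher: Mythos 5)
Your Steps 1 and 2 (well-definedness, local convexity and continuity of $u$, and the subsolution property via Lemma \ref{lem:max-vissub}) are fine and essentially match the paper. The supersolution step, however, contains a genuine gap, and it is not the one you anticipated at the end.

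You lift the Perron function $u$ itself in a ball $B$ and attempt to show $U\in\mathcal{S}_c^\varphi$, which would force $u\geq U$. But membership in $\mathcal{S}_c^\varphi$ requires $U\in C^0(\mathbb{R}^n\setminus\Omega)$ --- i.e. continuity up to $\partial\Omega$ --- together with $U\leq\varphi$ on $\partial\Omega$. Since $U=u$ near $\partial\Omega$, this presupposes that $u$ extends continuously to $\partial\Omega$ and satisfies $u\leq\varphi$ there. At the level of generality of Proposition \ref{lem:det=1} (merely bounded open $\Omega$ and bounded $\varphi$) this is not known: the only upper bound available is $u\leq v^+=\tfrac12|x|^2+c$, which gives $\limsup_{x\to\xi}u(x)\leq\tfrac12|\xi|^2+c$, and this can be strictly larger than $\varphi(\xi)$. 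Establishing that $u$ actually attains $\varphi$ at $\partial\Omega$ is precisely what Section \ref{sec:bdy} (Lemma \ref{lem:barrier} and Proposition \ref{lem:perron0}) is devoted to, and it requires the enclosing sphere and semi-convexity hypotheses that are \emph{not} assumed in the present proposition; a supremum of subfunctions can perfectly well overshoot the boundary data when those hypotheses fail.

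The paper avoids this by never lifting $u$ itself. Instead, for a fixed $x_0$ one picks $\overline v_k\in\mathcal{S}_c^\varphi$ with $\overline v_k(x_0)\to u(x_0)$ and lifts \emph{those} to $\overline w_k$; since $\overline w_k$ agrees with $\overline v_k$ off $B$, membership $\overline w_k\in\mathcal{S}_c^\varphi$ is automatic, giving $\overline w_k\leq u$. Uniform two-sided bounds and the local Lipschitz estimate for convex functions give locally uniform convergence to $\overline w$, a viscosity solution by Lemma \ref{lem:stablity} with $\overline w(x_0)=u(x_0)$ and $\overline w\leq u$. A second sequence centered at an arbitrary $y\in B$, dominated above $\overline w$ by construction, produces $\overline W$ with $\overline W(y)=u(y)$; the interior regularity of \cite{Caffarelli1995} together with the strong maximum principle for the linearized operator then forces $\overline W\equiv\overline w$, hence $u=\overline w$ in $B$. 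You will need this two-sequence/comparison mechanism (or an equivalent) to close the argument; the one-shot lifting of $u$ does not suffice.
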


\begin{proof}
The proof will be divided into three steps.

\textbf{Step 1.} We first verify that $u$ is well defined in $\mathbb{R}^n\setminus\overline{\Omega}$. Namely, the supremum is meaningful. It is clear from $c\geq -C_0$ that
$$v^-(x):=\frac{1}{2}|x|^2-C_0\in\mathcal{S}_c^\varphi\quad\text{for }x\in\mathbb{R}^n\setminus\Omega,$$
where $C_0=\sup_{x\in\partial\Omega}\left(\frac{1}{2}|x|^2-\varphi(x)\right)$. Thus, $\mathcal{S}_c^{\varphi}$ is nonempty. Moreover, set
$$v^+(x)=\frac{1}{2}|x|^2+c\quad\text{for }x\in\mathbb{R}^n\setminus\Omega.$$
It is easily seen that $v^+\geq\varphi$ on $\partial\Omega$. It follows from the comparison principle that, for each $v\in\mathcal{S}_c^\varphi$,
$$v\leq v^+\quad\text{in }\mathbb{R}^n\setminus\Omega.$$
Hence, the supremum is well defined, and $u\leq v^+$ in $\mathbb{R}^n\setminus\overline{\Omega}$.

\textbf{Step 2.} We next prove that $u\in C^0(\mathbb{R}^n\setminus\overline{\Omega})$ is a viscosity subsolution of \eqref{eq0:MA=1}. Indeed, this follows from the definition of $u$ and Lemma \ref{lem:max-vissub} immediately.

\textbf{Step 3.} We proceed to prove that $u$ is a viscosity solution of \eqref{eq0:MA=1}. It suffices to prove that, for any $B_r(x_0)\subset\subset\mathbb{R}^n\setminus\overline{\Omega}$,
$$\det{D^2u}=1\quad\text{in }B_r(x_0)$$
in the viscosity sense.

Fix $B_r(x_0)\subset\subset\mathbb{R}^n\setminus\overline{\Omega}$. By the definition of $u$, there exists a sequence $\{v_k\}\subset\mathcal{S}_c^\varphi$ such that $u(x_0)=\lim_{k\to\infty}v_k(x_0)$.
Let $\overline{v}_k=\max\{v_k,v^-\}$. Since $v_k$, $v^-\in \mathcal{S}_c^\varphi$, $\overline{v}_k\in C^0(\mathbb{R}^n\setminus\Omega)$ is a viscosity subsolution of \eqref{eq0:MA=1} in $\mathbb{R}^n\setminus\overline{\Omega}$ satisfying
$$\overline{v}_k\leq\varphi\ \text{on }\partial\Omega\quad\text{and}\quad\limsup_{|x|\to\infty}\bigg(\overline{v}_k(x)-\frac{1}{2}|x|^2\bigg)\leq c.$$
We thus get $\overline{v}_k\in\mathcal{S}_c^\varphi$ and
$$u(x_0)=\lim_{k\to\infty}\overline{v}_k(x_0).$$
Consider the lifting function $\overline{w}_k$ of $\overline{v}_k$ with respect to \eqref{eq0:MA=1} in $B_r(x_0)$. It follows from Proposition \ref{prop:lifting} that $\overline{w}_k\in\mathcal{S}_c^\varphi$ and
$$\overline{v}_k\leq\overline{w}_k\leq u\quad\text{in }B_r(x_0).$$
Hence,
$$u(x_0)=\lim_{k\to\infty}\overline{w}_k(x_0).$$
Note that $v^-\leq\overline{w}_k\leq v^+$ in $B_r(x_0)$. Hence, $\{\overline{w}_k\}$ is uniformly bounded in $B_r(x_0)$. By the local Lipschitz estimate for convex functions (see \cite[Theorem 1 in Chapter 6.3]{Evans-Gariepy1992}), we further have $\{|D\overline{w}_k|\}$ is locally uniformly bounded in $B_r(x_0)$. So, up to a subsequence, $\{\overline{w}_k\}$ locally uniformly converges to some convex function $\overline{w}$ in $B_r(x_0)$. By Lemma \ref{lem:stablity}, $\overline{w}$ is a viscosity solution of \eqref{eq0:MA=1} in $B_r(x_0)$. It follows that
$$\overline{w}\leq u\ \text{in }B_r(x_0)\quad\text{and}\quad\overline{w}(x_0)=u(x_0).$$

It remains to prove $\overline{w}\geq u$ in $B_r(x_0)$. Take $y\in B_r(x_0)$. There exists a sequence $\{V_k\}\subset\mathcal{S}_c^\varphi$ such that $u(y)=\lim_{k\to\infty}V_k(y)$. Let
$$\overline{V}_k=\max\{V_k,\overline{v}_k\}.$$
Then $\overline{V}_k\in\mathcal{S}_c^\varphi$. Consider the lifting function $\overline{W}_k$ of $\overline{V}_k$ with respect to \eqref{eq0:MA=1} in $B_r(x_0)$.
It follows from Proposition \ref{prop:lifting} that $\overline{W}_k\in\mathcal{S}_c^\varphi$ and
$$V_k\leq\overline{V}_k\leq\overline{W}_k\leq u\quad\text{in }B_r(x_0).$$
Hence,
$$u(y)=\lim_{k\to\infty}\overline{W}_k(y).$$
Also note that $v^-\leq\overline{W}_k\leq v^+$ in $B_r(x_0)$. Similar arguments for $\{\overline{w}_k\}$ apply to $\{\overline{W}_k\}$ gives that $\{\overline{W}_k\}$ locally uniformly converges to some convex
function $\overline{W}$ in $B_r(x_0)$, and $\overline{W}$ is a viscosity solution of \eqref{eq0:MA=1} in $B_r(x_0)$ satisfying
$$\overline{W}(y)=u(y).$$
It follows that
$$\overline{v}_k\leq\overline{V}_k,\quad\overline{w}_k\leq\overline{W}_k,\quad\overline{w}\leq\overline{W}\quad\text{in } B_r(x_0).$$
Then
$$u(x_0)=\overline{w}(x_0)\leq\overline{W}(x_0)\leq u(x_0).$$

We conclude that
\begin{equation*}
\begin{cases}
\det(D^2\overline{w})=\det(D^2\overline{W})=1& \text{in }B_r(x_0), \\
\overline{w}\leq\overline{W}& \text{in }B_r(x_0), \\
\overline{w}(x_0)=\overline{W}(x_0).
\end{cases}
\end{equation*}
By the interior regularity of viscosity solution of \eqref{eq0:MA=1} (see for instance \cite{Caffarelli1995}), one has $\overline{w}$, $\overline{W}\in C^\infty(\overline{B_{\frac{r}{2}}(x_0)})$. This yields that
$$\sum_{i,j=1}^na_{ij}(x)D_{ij}(\overline{W}-\overline{w})(x)=0\quad\text{for }x\in B_{\frac{r}{2}}(x_0),$$
where
$$a_{ij}(x)=\int_{0}^{1}\frac{\partial (\det)^{\frac{1}{n}}}{\partial\xi_{ij}}(D^2\overline{w}+t(D^2\overline{W}-D^2\overline{w}))\mathrm{d}t.$$
Since $D^2\overline{w}$ and $D^2\overline{W}$ are bounded in $\overline{B_{\frac{r}{2}}(x_0)}$, we have $(a_{ij}(x))\geq\delta I$ in $B_{\frac{r}{2}}(x_0)$ for some constant $\delta>0$.
The strong maximum principle for uniformly elliptic linear equation implies that
$$\overline{W}=\overline{w}\quad\text{in }B_r(x_0).$$
This gives $\overline{w}(y)=\overline{W}(y)=u(y)$.
By the arbitrariness of $y$, we have
$$u=\overline{w}\quad\text{in }B_r(x_0).$$
This completes the proof.
\end{proof}

\begin{remark}
We see from the proof that if $\varphi(x)-\frac{1}{2}|x|^2$ is bounded on $\partial\Omega$, then Proposition \ref{lem:det=1} also holds for unbounded $\Omega$.

\end{remark}

\section{Asymptotic behavior of the Perron's solution near infinity}\label{sec:asy}

In this section, we will demonstrate that the Perron's solution achieves the asymptotic behavior $\frac{1}{2}|x|^2+c$ near infinity, provided $c$ large.

\begin{prop}\label{lem:asym}
  Let $\Omega$ be a bounded open set of $\mathbb{R}^n$ satisfying $B_{r_1}(0)\subset\Omega\subset\subset B_{r_2}(0)$ for some positive constants $r_1$ and $r_2$, $n\geq3$, and let $\varphi$ be a bounded function on $\partial\Omega$. Then the function $u$ defined by \eqref{eq:perron-sol} satisfies
  $$\lim_{|x|\to\infty}\left(u(x)-\left(\frac{1}{2}|x|^2+c\right)\right)=0,$$
  where
  \begin{equation}\label{eq:c1}
  c\geq\max\bigg\{\mu(-r_1^n), \sup_{x\in\partial\Omega}\bigg(\varphi(x)-\frac{1}{2}|x|^2\bigg)\bigg\},
  \end{equation}
  and
  $$\mu(\alpha)=\inf_{\partial\Omega}\varphi-\frac{1}{2}r_2^2+\int_{r_2}^\infty s\bigg(\bigg(1+\frac{\alpha}{s^n}\bigg)^{\frac{1}{n}}-1\bigg)\mathrm{d}s\quad \text{for}~\alpha\geq-r_1^n.$$
\end{prop}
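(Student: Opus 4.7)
The plan is to sandwich $u(x)-(\tfrac{1}{2}|x|^2+c)$ between two quantities that vanish as $|x|\to\infty$: the trivial upper bound $v^+(x):=\tfrac{1}{2}|x|^2+c$ and a family of explicit radial subsolutions furnishing the lower bound.

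For the upper bound, $v^+$ is a classical solution of $\det D^2v^+=1$ with the prescribed asymptotic profile, and the second entry of \eqref{eq:c1} guarantees $v^+\geq\varphi$ on $\partial\Omega$. Exactly as in Step~1 of the proof of Proposition~\ref{lem:det=1}, the comparison principle yields $v\leq v^+$ for every $v\in\mathcal{S}_c^{\varphi}$, so $u\leq v^+$, i.e.\ $u(x)-(\tfrac{1}{2}|x|^2+c)\leq 0$.

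For the lower bound I would use radial subsolutions. A radial function $w(|x|)$ has $\det D^2 w=w''(r)(w'(r)/r)^{n-1}$, so separating variables in $\det D^2 w=1$ produces the one-parameter family $w'(r)=(r^n+\alpha)^{1/n}$. For each $\alpha\geq -r_1^n$, set
$$W_\alpha(x):=w_\alpha(|x|),\qquad w_\alpha(r):=\inf_{\partial\Omega}\varphi+\int_{r_2}^{r}(s^n+\alpha)^{1/n}\,\mathrm{d}s.$$
The condition $\alpha\geq-r_1^n$ makes $w_\alpha$ smooth, convex, and non-decreasing on $[r_1,\infty)$, while $B_{r_1}(0)\subset\Omega\subset\subset B_{r_2}(0)$ gives $\mathbb{R}^n\setminus\overline{\Omega}\subset\{|x|>r_1\}$ and $|x|\leq r_2$ on $\partial\Omega$. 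Hence $W_\alpha$ is a classical (and therefore viscosity) subsolution of \eqref{eq0:MA=1} in $\mathbb{R}^n\setminus\overline{\Omega}$, and the monotonicity of $w_\alpha$ together with $|x|\leq r_2$ on $\partial\Omega$ gives $W_\alpha(x)\leq w_\alpha(r_2)=\inf_{\partial\Omega}\varphi\leq\varphi(x)$ there. A direct rearrangement gives
$$W_\alpha(x)-\tfrac{1}{2}|x|^2\;=\;\mu(\alpha)-\int_{|x|}^{\infty}s\bigl((1+\alpha/s^n)^{1/n}-1\bigr)\,\mathrm{d}s,$$
the tail integral converging absolutely since $n\geq 3$ makes the integrand $O(|\alpha|/s^{n-1})$ at infinity; in particular $W_\alpha\in\mathcal{S}_c^{\varphi}$ whenever $\mu(\alpha)\leq c$.

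To close the sandwich I would verify that $\mu:[-r_1^n,\infty)\to\mathbb{R}$ is continuous and strictly increasing (the integrand is strictly monotone in $\alpha$), with $\mu(-r_1^n)$ finite and $\mu(\alpha)\to\infty$ as $\alpha\to\infty$ (for large $\alpha$ the integrand near $s=r_2$ is of order $\alpha^{1/n}$). Since $c\geq\mu(-r_1^n)$, the intermediate value theorem produces $\alpha\geq-r_1^n$ with $\mu(\alpha)=c$; then $W_\alpha\in\mathcal{S}_c^{\varphi}$, so $u\geq W_\alpha$ on $\mathbb{R}^n\setminus\Omega$, and together with the upper bound,
$$-\int_{|x|}^{\infty}s\bigl((1+\alpha/s^n)^{1/n}-1\bigr)\,\mathrm{d}s\;\leq\;u(x)-\bigl(\tfrac{1}{2}|x|^2+c\bigr)\;\leq\;0,$$
which forces the limit to be zero. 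The main subtlety is the bookkeeping that places $W_\alpha$ inside the Perron class $\mathcal{S}_c^{\varphi}$: the boundary inequality uses $\Omega\subset\subset B_{r_2}(0)$ and the monotonicity of $w_\alpha$; the local convexity and subsolution property on the exterior domain use $B_{r_1}(0)\subset\Omega$; and the asymptotic condition uses the range of $\mu$. Once these pieces align, the rest is mechanical.
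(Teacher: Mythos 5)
Your proof is correct and follows essentially the same route as the paper's: the same one-parameter radial family $w_\alpha$ of exact solutions with $w_\alpha\leq\varphi$ on $\partial\Omega$, the same quantity $\mu(\alpha)$ identified as the limit of $w_\alpha-\tfrac12|x|^2$ at infinity with its monotonicity and range used to hit the prescribed $c$, and the same upper bound $v^+$ from comparison to close the sandwich. The only cosmetic difference is that you invoke the intermediate value theorem to pick $\alpha$ with $\mu(\alpha)=c$ and write the rearranged tail explicitly, while the paper writes $w_{\mu^{-1}(c)}$ and phrases the conclusion via $\liminf$/$\limsup$.
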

\begin{proof}
For $\alpha\geq-r_1^n$, let
\begin{equation}\label{eq:w-alpha}
w_{\alpha}(x)=\inf_{\partial\Omega}\varphi+\int_{r_2}^{|x|}(s^n+\alpha)^{\frac{1}{n}}\mathrm{d}s\quad\text{for }x\in\mathbb{R}^n\setminus\Omega.
\end{equation}
Then $w_{\alpha}\in C^0(\mathbb{R}^n\setminus\Omega)$ is locally convex in $\mathbb{R}^n\setminus\overline{\Omega}$, and
\begin{equation*}
  \begin{cases}
    \det{D^2w_{\alpha}}=1 & \mbox{in }\mathbb{R}^n\setminus\overline{\Omega}, \\
    w_{\alpha}\leq\varphi & \mbox{on }\partial\Omega.
  \end{cases}
\end{equation*}
Direct calculation shows
\begin{equation*}
\begin{split}
&\lim_{|x|\to\infty}\bigg(w_{\alpha}(x)-\frac{1}{2}|x|^2\bigg) \\
= & \inf_{\partial\Omega}\varphi-\frac{1}{2}r_2^2+\int_{r_2}^\infty s\bigg(\bigg(1+\frac{\alpha}{s^n}\bigg)^{\frac{1}{n}}-1\bigg)\mathrm{d}s \\
= & :\mu(\alpha).
\end{split}
\end{equation*}
Clearly, $\mu(\alpha)$ is smooth and strictly increasing with respect to $\alpha\in[-r_1^n,\infty)$ and
$$\lim_{\alpha\to\infty}\mu(\alpha)=\infty.$$
Then for $c\geq\mu(-r_1^n)$, we have $w_{\mu^{-1}(c)}\in\mathcal{S}_c^{\varphi}$.
It follows that
$$\liminf_{|x|\to\infty}\bigg(u(x)-\frac{1}{2}|x|^2\bigg)\geq\lim_{|x|\to\infty}\bigg(w_{\mu^{-1}(c)}(x)-\frac{1}{2}|x|^2\bigg)=c.$$
Recall that for $c\geq\sup_{x\in\partial\Omega}\left(\varphi(x)-\frac{1}{2}|x|^2\right)$,
$$\limsup_{|x|\to\infty}\bigg(u(x)-\frac{1}{2}|x|^2\bigg)\leq\lim_{|x|\to\infty}\bigg(v^+(x)-\frac{1}{2}|x|^2\bigg)=c,$$
where $v^+(x)=\frac{1}{2}|x|^2+c$ is as in the proof of Proposition \ref{lem:det=1}.
This finishes the proof.
\end{proof}
\section{Boundary behavior of the Perron's solution}\label{sec:bdy}

In this section, we deal with the boundary behavior of the Perron's solution. The key lies in proving a barrier lemma. We shall overcome the difficulty of non $C^2$ regularity of boundary values and domains.

Let $\varphi$ be semi-convex with respect to $\partial\Omega$ at $\xi$. Namely, $\psi(x'):=\varphi(x',\rho(x'))$ is semi-convex in $B'_{\delta(\xi)}(0')$ under the local coordinate system at $\xi\in\partial\Omega$, where $\delta(\xi)$ is as in \eqref{eq:bdyre}. Then there exist $K(\xi)>0$ and $p(\xi)\in\mathbb{R}^{n-1}$ such that
  \begin{equation}\label{eq:semi-convex}
  \psi(x')+\frac{K(\xi)}{2}|x'|^2\geq\psi(0')+p(\xi)\cdot x'\quad\text{for }|x'|<\delta(\xi).
  \end{equation}
By the semi-convexity of $\psi$ at $\xi$, $\psi$ is continuous in $B'_{\delta(\xi)}(0')$, and so $\varphi$ is continuous on $\partial\Omega\cap B_{\delta(\xi)}(0)$.

Now, we are able to prove the following barrier lemma.

\begin{lemma}\label{lem:barrier}
Let $\Omega$ be a convex domain, and let $\varphi$ be a bounded function on $\partial\Omega$.
Suppose that $\Omega$ satisfies an enclosing sphere condition at $\xi\in\partial\Omega$, and
$\varphi$ is semi-convex with respect to $\partial\Omega$ at $\xi$. Then there exists $\bar{x}(\xi)\in\mathbb{R}^n$ such that
$$|\bar{x}(\xi)|\leq C(\xi)\quad\text{and}\quad w_{\xi}<\varphi\quad\text{on }\partial\Omega\setminus\{\xi\},$$
where
$$w_{\xi}(x)=\varphi(\xi)+\frac{1}{2}\left(|x-\bar{x}(\xi)|^2-|\xi-\bar{x}(\xi)|^2\right)\quad\text{for}\ x\in\mathbb{R}^n,$$
and $C(\xi)>0$ is a constant depending only on $\delta(\xi)$, $r(\xi)$, $K(\xi)$, $|p(\xi)|$, $\sup_{\partial\Omega}|\varphi|$ and the $C^{0,1}$ norm of $\partial\Omega$. Here $\delta(\xi)$ and $r(\xi)$ are as in \eqref{eq:bdyre} and Definition \ref{defn:exterior-shpere} respectively, $K(\xi)$ and $p(\xi)$ are as in \eqref{eq:semi-convex}.
\end{lemma}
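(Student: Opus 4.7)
The plan is to choose $\bar{x}(\xi)$ explicitly in the local coordinate system at $\xi$: take its tangential part to be $-p(\xi)$, with $p(\xi)$ the subgradient appearing in \eqref{eq:semi-convex}, and its inward-normal part $h$ to be a single scalar parameter, fixed large at the end. Placing $\xi$ at the origin, the enclosing sphere has center $y(\xi) = (0',r(\xi))$, which gives the global bound $|x|^2 \le 2 r(\xi) x_n$ for every $x \in \partial\Omega$ and, on the graph piece, the one-sided quadratic bound
\begin{equation*}
  \rho(x') \ge \frac{|x'|^2}{2 r(\xi)}.
\end{equation*}
With $\bar{x} = (-p(\xi), h)$, a direct expansion yields
\begin{equation*}
  w_\xi(x',\rho(x')) - \varphi(\xi) = \tfrac{1}{2}|x'|^2 + \tfrac{1}{2}\rho(x')^2 + p(\xi) \cdot x' - h\,\rho(x').
\end{equation*}

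I would then split $\partial\Omega \setminus \{\xi\}$ into the graph piece $\{|x'| < \delta(\xi)\}$ and its complement. On the graph piece, the semi-convexity lower bound \eqref{eq:semi-convex} reduces the desired $w_\xi < \varphi$ to
\begin{equation*}
  \tfrac{1+K(\xi)}{2}|x'|^2 + \tfrac{1}{2}\rho(x')^2 < h\,\rho(x'), \qquad 0 < |x'| < \delta(\xi),
\end{equation*}
which, using $|x'|^2 \le 2 r(\xi) \rho(x')$ to absorb the first term and $\rho(x') \le M_0$ (with $M_0$ controlled by the $C^{0,1}$ structure of $\partial\Omega$) for the second, is satisfied by any $h > h_1 := (1+K(\xi))r(\xi) + M_0/2$, since $\rho(x') > 0$ for $x' \ne 0'$ by the quadratic lower bound. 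On the complement, the $C^{0,1}$ structure yields a positive lower bound $|x - \xi| \ge \delta'(\xi)$, and $x \in \overline{B_{r(\xi)}(y(\xi))}$ gives $\tfrac{1}{2}(|x|^2 - |\xi|^2) \le y(\xi) \cdot (x - \xi)$, so in local coordinates
\begin{equation*}
  w_\xi(x) - \varphi(\xi) \le (y(\xi) - \bar{x}) \cdot (x - \xi) = p(\xi) \cdot x' - (h - r(\xi)) x_n.
\end{equation*}
Combined with $x_n \ge (\delta'(\xi))^2/(2 r(\xi))$ and $|x'|$ bounded by the diameter of $\partial\Omega$, any $h$ exceeding a second threshold $h_2$ (built from $r(\xi)$, $\delta'(\xi)$, $|p(\xi)|$, $\sup_{\partial\Omega}|\varphi|$) forces the right-hand side strictly below $-2\sup_{\partial\Omega}|\varphi|$, whence $w_\xi < \varphi$. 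Setting $h := \max\{h_1,h_2\} + 1$ and transporting $\bar{x} = (-p(\xi), h)$ back to global coordinates yields the required $\bar{x}(\xi)$, with $|\bar{x}(\xi)| \le C(\xi)$ of the stated form.

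The main obstacle is reconciling the two regimes via a \emph{single} scalar $h$, and, conceptually, extracting from the enclosing-sphere condition only the \emph{one-sided} quadratic bound $\rho(x') \ge |x'|^2/(2r(\xi))$ rather than a two-sided $C^2$ curvature bound as in \cite{Caffarelli-Li-2003}. This one-sided bound is precisely what matches the affine subgradient lower bound \eqref{eq:semi-convex} coming from semi-convexity -- neither hypothesis produces pointwise Hessian control, and the local barrier estimate hinges on pitting the quadratic $\tfrac12|x|^2$ of $w_\xi$ against these two weak one-sided bounds simultaneously.
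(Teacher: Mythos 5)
Your proposal is correct and follows essentially the same route as the paper's proof: the barrier center $\bar{x}(\xi)$ has tangential part $-p(\xi)$ and a single scalar normal part to be taken large, the boundary is split into the graph patch $\{|x'|<\delta(\xi)\}$ and its complement, the graph patch is handled by pitting the semi-convexity lower bound against the enclosing-sphere quadratic lower bound $\rho(x')\ge |x'|^2/(2r(\xi))$, and the complement is dominated by using the sphere to bound $|x|$ above and $x_n$ below. The only differences are cosmetic: on the graph patch you absorb everything into $\rho$ (via $\rho\le M_0$) whereas the paper absorbs into $|x'|^2$ (via $\rho\le C_0|x'|$), and on the complement your inequality $w_\xi(x)-\varphi(\xi)\le (y(\xi)-\bar{x})\cdot(x-\xi)$ is a slightly slicker algebraic repackaging of the paper's direct expansion.
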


\begin{proof}
  By a translation and a rotation, we may assume without losing the generality that the coordinate system in $\mathbb{R}^n$ is just the local coordinate system at $\xi$, and $x_n$-axis is along the direction of $y(\xi)-\xi$, where $y(\xi)$ is the center of the enclosing sphere at $\xi$. Then $\xi=0$, $y(\xi)=(0',r(\xi))$ and $\partial\Omega$ can be locally represented by the graph of $x_n=\rho(x')$, $|x'|<\delta(\xi)$.
  Let
  $$\bar{x}=(-p(\xi),R),$$
  where $R>0$ will be chosen later. Let
  $$w(x)=\varphi(0)+\frac{1}{2}\left(|x-\bar{x}|^2-|\bar{x}|^2\right)\quad\text{for}\ x\in\mathbb{R}^n.$$
  Denote $\delta=\delta(\xi)$, $r=r(\xi)$, $K=K(\xi)$ and $p=p(\xi)$. It is clear from the enclosing sphere condition $\partial\Omega\subset\overline{B_r((0',r))}$ that
  $$|x'|^2+(\rho(x')-r)^2\leq r^2\quad\text{for}~|x'|<\delta,$$
  and so
  \begin{equation}\label{eq:exterior-sphere-convex}
  \rho(x')\geq\frac{1}{2r}|x'|^2\quad\text{for}~|x'|<\delta.
  \end{equation}
  Since $\Omega$ is bounded and convex, $\partial\Omega$ is Lipschitz continuous and
  \begin{equation}\label{eq:bdy-Lip}
    \rho(x')\leq C_0|x'|\quad\text{for }|x'|<\delta,
  \end{equation}
  where $C_0>0$ is the $C^{0,1}$ norm of $\partial\Omega$.

  \textbf{Case 1.} $x=(x',\rho(x'))\in\partial\Omega\setminus\{0\}$ and $|x'|<\delta$. It follows from \eqref{eq:semi-convex}-\eqref{eq:bdy-Lip} that
  \begin{equation*}
    \begin{split}
       (w-\varphi)(x)= &\ \frac{1}{2}|x|^2-x\cdot\bar{x}+\varphi(0)-\varphi(x) \\
        = &\ \frac{1}{2}|x'|^2+\frac{1}{2}\rho(x')^2-R\rho(x')+\varphi(0)-\varphi(x',\rho(x'))+p\cdot x' \\
        \leq &\ \frac{1}{2}\bigg(1+C_0^2-\frac{R}{r}\bigg)|x'|^2+\psi(0')-\psi(x')+p\cdot x' \\
        \leq &\ \frac{1}{2}\bigg(1+C_0^2-\frac{R}{r}+K\bigg)|x'|^2 \\
        < &\ 0,
    \end{split}
  \end{equation*}
  provided $$R>R_1:=r(1+C_0^2+K).$$

  \textbf{Case 2.} $x\in\partial\Omega\setminus\{(x',
  \rho(x'))|\,|x'|<\delta\}$. By the convexity of $\Omega$ and \eqref{eq:exterior-sphere-convex}, we have $x_n\geq\frac{1}{2r}\delta^2$.
  It follows that
  \begin{equation*}
    \begin{split}
       (w-\varphi)(x) = &\ \frac{1}{2}|x|^2-x\cdot\bar{x}+\varphi(0)-\varphi(x) \\
       = &\ \frac{1}{2}|x|^2+p\cdot x'-Rx_n+\varphi(0)-\varphi(x) \\
        \leq &\ \frac{1}{2}(2r)^2+r|p|-\frac{R}{2r}\delta^2+2\sup_{\partial\Omega}|\varphi| \\
        < &\ 0,
    \end{split}
  \end{equation*}
  provided $$R>R_2:=\frac{2r}{\delta^2}(2r^2+r|p|+2\sup_{\partial\Omega}|\varphi|).$$

  Take $R>\max\{R_1,R_2\}$. In both cases, we conclude $w<\varphi$ on $\partial\Omega\setminus\{\xi\}$. We finish the proof by taking $C(\xi)=\sqrt{|p|^2+R^2}$.
\end{proof}


Benefiting from Lemma \ref{lem:barrier}, we obtain that the Perron's solution can be continuously extended to the boundary $\partial\Omega$ in a pointwise way.

\begin{prop}\label{lem:perron0}
Let $u$ be the function defined by \eqref{eq:perron-sol}. Suppose that $\Omega$, $\varphi$ and $\xi\in\partial\Omega$ satisfy the assumptions in Lemma \ref{lem:barrier}. Then there exists a constant $c_0(\xi)$, depending only on $n$, $\delta(\xi)$, $r(\xi)$, $K(\xi)$, $|p(\xi)|$, $\sup_{\partial\Omega}|\varphi|$ and the $C^{0,1}$ norm of $\partial\Omega$, such that for every $c>c_0(\xi)$, $u(x)\to\varphi(\xi)$ as $\mathbb{R}^n\setminus\overline{\Omega}\ni x\to\xi$. 
\end{prop}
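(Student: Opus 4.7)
My plan follows the Perron-method template for boundary regularity, producing sub- and super-barriers at $\xi$ and invoking comparison principles. Lemma~\ref{lem:barrier} supplies the sub-barrier $w_\xi$, while the super-barrier is a simple linear convex function constructed by hand. The threshold $c_0(\xi)$ will arise from the splicing needed to convert $w_\xi$ into an admissible subsolution in $\mathcal{S}_c^\varphi$.

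For the upper bound $\limsup_{x\to\xi}u(x)\leq\varphi(\xi)$, I would fix $\epsilon>0$ and consider the function
\[
W_\epsilon(x)=\varphi(\xi)+\epsilon+A|x-\xi|,\qquad A>0,
\]
on a small cap $B_\rho(\xi)\cap(\mathbb{R}^n\setminus\overline\Omega)$. Since $W_\epsilon$ is convex with one vanishing Hessian eigenvalue off $\xi$, $\det D^2W_\epsilon\equiv 0\leq 1$, so $W_\epsilon$ is a viscosity supersolution in the interior of the cap. Continuity of $\varphi$ at $\xi$ (from semi-convexity) gives $\varphi\leq\varphi(\xi)+\epsilon$ on $\partial\Omega\cap\overline{B_\rho(\xi)}$ for $\rho$ small; taking $A$ large enough (depending linearly on $c$) secures $W_\epsilon\geq v^+\geq u$ on $\partial B_\rho(\xi)\cap(\mathbb{R}^n\setminus\overline\Omega)$. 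Comparison then yields $u\leq W_\epsilon$ on the cap, so $\limsup u(x)\leq W_\epsilon(\xi)=\varphi(\xi)+\epsilon$. Sending $\epsilon\to 0$ gives the upper bound.

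For the lower bound $\liminf_{x\to\xi}u(x)\geq\varphi(\xi)$, I would construct, for each $\epsilon>0$, an element $v_\epsilon\in\mathcal{S}_c^\varphi$ that coincides with $w_\xi-\epsilon$ near $\xi$. The naive candidate $w_\xi-\epsilon$ is not in $\mathcal{S}_c^\varphi$: its tail $w_\xi(x)-\tfrac12|x|^2=-x\cdot\bar x(\xi)+\mathrm{const}$ is unbounded in the direction $-\bar x(\xi)$. One therefore splices $w_\xi-\epsilon$ with the global radial subsolution $w_\alpha$ from \eqref{eq:w-alpha}, chosen with $\mu(\alpha)=c$. Inside a large ball $B_M(0)\supset\Omega$ one takes $v_\epsilon=\max\{w_\xi-\epsilon,w_\alpha\}$; outside one takes $v_\epsilon=w_\alpha$. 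Lemma~\ref{lem:split} ensures $v_\epsilon$ is a subsolution provided $w_\xi-\epsilon\leq w_\alpha$ on $\partial B_M(0)\cap(\mathbb{R}^n\setminus\overline\Omega)$. This matching inequality amounts to $c\geq c_0(\xi)$, where $c_0(\xi)$ depends on $M$, the bound $|\bar x(\xi)|\leq C(\xi)$, and the various barrier parameters from Lemma~\ref{lem:barrier}. Then $u\geq v_\epsilon$ and $v_\epsilon(x)\to w_\xi(\xi)-\epsilon=\varphi(\xi)-\epsilon$ as $x\to\xi$ give $\liminf u(x)\geq\varphi(\xi)-\epsilon$.

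The main technical obstacle is the rigorous splicing: Lemma~\ref{lem:split} formally requires the splicing region $D$ to satisfy $\overline D\subset D_1$, which is incompatible with including $\xi\in\partial\Omega$ in $\overline D$. I expect to resolve this either by verifying local convexity of $v_\epsilon$ across the interface $\partial B_M(0)\cap D_1$ directly, using the strict domination $w_\alpha>w_\xi-\epsilon$ that $c>c_0(\xi)$ provides, or by a limiting argument with thickened exclusion zones $\Omega^\delta\supset\Omega$ and sending $\delta\to 0$.
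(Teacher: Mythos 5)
Your proof is correct in outline and follows the Perron barrier template, but the upper-bound argument is genuinely different from the paper's. For $\limsup u\le\varphi(\xi)$ the paper does not use a cone; it first extends $\varphi$ to $\overline\varphi\in C^0(\partial\Omega)$ with $\overline\varphi=\varphi$ near $\xi$ and $\overline\varphi\ge\varphi$ everywhere, then solves $\Delta w^+=0$ in $B_{r_2}(0)\setminus\overline\Omega$ with boundary data $\overline\varphi$ on $\partial\Omega$ and $\max_{\partial B_{r_2}}u$ on $\partial B_{r_2}(0)$, relying on Proposition~B (solvability of the Laplace Dirichlet problem under the exterior cone condition). Since every convex $v$ with $\det D^2v\ge1$ satisfies $\Delta v\ge n>0$, each $v\in\mathcal S_c^\varphi$ is subharmonic and lies below $w^+$, and one concludes by sending $x\to\xi$. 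Your cone function $W_\epsilon(x)=\varphi(\xi)+\epsilon+A|x-\xi|$ achieves the same: it is smooth on the cap away from the vertex $\xi\in\partial\Omega$, $\det D^2W_\epsilon\equiv0$, so touching any $v\in\mathcal S_c^\varphi$ from above at an interior point of the cap would force $\det D^2W_\epsilon\ge1$, a contradiction. Your route is more elementary in that it dispenses with the harmonic auxiliary problem and the exterior cone condition; the paper's route, by contrast, reuses the same harmonic-comparison machinery later in Step~3 of the proof of Proposition~\ref{lem:main}. One small correction to your write-up: the comparison must be stated for each $v\in\mathcal S_c^\varphi$ individually rather than for $u$ itself (the continuity of $u$ up to $\partial\Omega$ is exactly what is being established), and only afterwards do you take the supremum.

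For the lower bound your idea matches the paper's: splice the quadratic barrier $w_\xi$ of Lemma~\ref{lem:barrier} with a radial subsolution $w_\alpha$, and let the threshold $c_0(\xi)=\mu(\alpha_0)$ come from the splicing inequality on a fixed sphere. Two remarks. First, the subtraction of $\epsilon$ from $w_\xi$ is unnecessary since Lemma~\ref{lem:barrier} already gives $w_\xi\le\varphi$ on $\partial\Omega$ with equality exactly at $\xi$; the paper instead shifts the radial piece downward, defining $\underline w_\alpha=w_\alpha-C_1$ with $C_1$ controlling $w_\xi$ on $B_{r_2+1}(0)\setminus\Omega$, which makes the ordering $w_\xi\ge\underline w_\alpha$ near $\partial\Omega$ and $w_\xi<\underline w_\alpha$ on $\partial B_{r_2+1}(0)$ hold robustly and uniformly. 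Second, you rightly flag the tension with the hypothesis $D\subset\subset D_1$ of Lemma~\ref{lem:split}; the paper quietly applies the lemma in the same situation (the overlap region touches $\partial\Omega$), and your two proposed fixes — a direct verification of local convexity across the matching sphere, or a limiting argument with thickened exclusion sets — are both reasonable ways to make that step rigorous. Overall the proposal is sound.
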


\begin{proof}
Since $\Omega$ satisfies an enclosing sphere condition at $\xi$, $\Omega$ is bounded. By a translation, we may assume $B_{r_1}(0)\subset\Omega\subset\subset B_{r_2}(0)$ for some constants $r_1,r_2>0$. By Lemma \ref{lem:barrier}, we have
\begin{equation*}
\begin{split}
w_{\xi}(x) & =\varphi(\xi)+\frac{1}{2}|x|^2-\frac{1}{2}|\xi|^2+(\xi-x)\cdot\bar{x}(\xi) \\
& \geq \varphi(\xi)-\frac{1}{2}r_2^2-2r_2C(\xi) \\
& \geq \varphi(\xi)-C_1\quad\text{for}~x\in B_{r_2}(0)\setminus\Omega,
\end{split}
\end{equation*}
and
\begin{equation*}
\begin{split}
w_\xi(x) & =\varphi(\xi)+\frac{1}{2}|x|^2-\frac{1}{2}|\xi|^2+(\xi-x)\cdot\bar{x}(\xi) \\
& \leq\varphi(\xi)+\frac{1}{2}(r_2+1)^2+(2r_2+1)C(\xi) \\
& \leq \varphi(\xi)+C_1\quad\text{for}~x\in B_{r_2+1}(0)\setminus B_{r_2}(0),
\end{split}
\end{equation*}
where $C(\xi)$ is as in Lemma \ref{lem:barrier} and $C_1$ depends only on $C(\xi)$ and $r_2$. Take $\alpha_0\geq-r_1^n$ such that
$$\inf_{\partial\Omega}\varphi+\int_{r_2}^{r_2+1}\left(s^n+\alpha_0\right)^{\frac{1}{n}}\mathrm{d}s>\sup_{\partial\Omega}\varphi+2C_1,$$
and let
$$\underline{w}_{\alpha}(x)=w_{\alpha}(x)-C_1,$$
where $w_\alpha$ is given by \eqref{eq:w-alpha}. Then for any $\alpha\geq\alpha_0$,
$$w_{\xi}\geq\inf_{\partial\Omega}\varphi-C_1\geq\underline{w}_{\alpha}\quad\text{in}~B_{r_2}(0)\setminus\Omega,$$
$$w_{\xi}<\underline{w}_{\alpha_0}\leq\underline{w}_{\alpha}\quad\text{on}~\partial B_{r_2+1}(0).$$

Let
\begin{equation*}
\underline{u}(x)=
\begin{cases}
w_{\xi}(x), & x\in B_{r_2}(0)\setminus\Omega, \\
\max\{\underline{w}_{\alpha}(x),w_{\xi}(x)\}, & x\in B_{r_2+1}(0)\setminus B_{r_2}(0), \\
\underline{w}_{\alpha}(x), & x\in\mathbb{R}^n\setminus B_{r_2+1}(0).
\end{cases}
\end{equation*}
In view of Lemma \ref{lem:split}, $\underline{u}\in\mathcal{S}_c^\varphi$ provided $c\geq\mu(\alpha_0)$. Hence,
$$\liminf_{\mathbb{R}^n\setminus\overline{\Omega}\ni x\to\xi}u(x)\geq\liminf_{\mathbb{R}^n\setminus\overline{\Omega}\ni x\to\xi}\underline{u}(x)=w_{\xi}(\xi)=\varphi(\xi).$$

Recall that $\varphi$ is continuous in a neighborhood of $\xi$ on $\partial\Omega$. Since $\varphi$ is bounded, the extension theorem implies that there exists $\overline{\varphi}\in C^0(\partial\Omega)$ satisfying $\overline{\varphi}=\varphi$ near $\xi$ and
$$\overline{\varphi}\geq\varphi\quad \text{on }\partial\Omega.$$
Since $\Omega$ is bounded and convex, $\partial\Omega$ is Lipschitz continuous. Then $B_{r_2}(0)\setminus\overline{\Omega}$ satisfies the exterior cone condition. By Proposition \hyperlink{B}{B}, there exists $w^+\in C^0(\overline{B_{r_2}(0)\setminus\Omega})$ satisfying
\begin{equation*}
  \begin{cases}
    \Delta w^+=0 & \mbox{in } B_{r_2}(0)\setminus\overline{\Omega}, \\
    w^+=\overline{\varphi} & \mbox{on }\partial\Omega, \\
    w^+=\max_{\partial B_{r_2}(0)}u & \mbox{on }\partial B_{r_2}(0).
  \end{cases}
\end{equation*}
Then comparison principal gives that for any $v\in\mathcal{S}_c^\varphi$,
$$v\leq w^+\quad\text{in }B_{r_2}(0)\setminus\overline{\Omega}.$$
It follows that
$$u\leq w^+\quad\text{in }B_{r_2}(0)\setminus\overline{\Omega},$$
and so
$$\limsup_{\mathbb{R}^n\setminus\overline{\Omega}\ni x\to\xi}u(x)\leq\lim_{\mathbb{R}^n\setminus\overline{\Omega}\ni x\to\xi}w^+(x)=\overline{\varphi}(\xi)=\varphi(\xi).$$
Therefore, $u$ satisfies the boundary condition at $\xi$.
\end{proof}

\section{Proof of Theorem \ref{thm:main}}\label{sec:main}

In the previous section, we proved that the Perron's solution is continuous up to a boundary point $\xi$ when $c>c_0(\xi)$. In this section, we will give the uniform estimates for $c_0(\xi)$ with respect to $\xi\in\partial\Omega$, and thus obtain that the Perron's solution is continuous up to the whole boundary when $c$ is sufficiently large. Combining with the conclusions in Sections \ref{sec:eq}-\ref{sec:bdy}, we can complete the proof of the existence part of Theorem \ref{thm:main}. The nonexistence part can be deduced as in \cite{Li-Lu}.

To establish the uniform estimate for $c_0(\xi)$, we need the following uniform estimates for $\delta(\xi)$, $K(\xi)$ and $p(\xi)$, where $\delta(\xi)$ is as in \eqref{eq:bdyre}, and $K(\xi)$ and $p(\xi)$ are as in \eqref{eq:semi-convex}. These estimates are also of independent interest.

\begin{lemma}\label{lem:A}
 Let $\Omega$ be a bounded open set of $\mathbb{R}^n$, $\partial\Omega\in C^{1}$. Then $\Omega$ satisfies
 \begin{equation}\label{H}
 \delta:=\inf_{\partial\Omega}\delta(\xi)>0.\tag{H}
 \end{equation}
\end{lemma}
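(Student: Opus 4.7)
The strategy is to bootstrap from a uniform modulus of continuity of the inner unit normal (which follows from the $C^1$ regularity of $\partial\Omega$ together with compactness) to a uniform size for the local graph representation at each boundary point.

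Since $\Omega$ is bounded and $\partial\Omega\in C^1$, $\partial\Omega$ is a compact $C^1$ hypersurface, and the unit inner normal field $\nu:\partial\Omega\to S^{n-1}$ is uniformly continuous. I would first pick $\varepsilon_0>0$ so that $\nu(\xi)\cdot\nu(\eta)>\sqrt{3}/2$ whenever $\xi,\eta\in\partial\Omega$ satisfy $|\xi-\eta|<\varepsilon_0$. Fix $\xi\in\partial\Omega$ and work in its local coordinate system, so $\xi=0$ and $\nu(\xi)=e_n$. Then for every $\eta\in\partial\Omega\cap B_{\varepsilon_0}(0)$ we have $\nu^{(n)}(\eta)>\sqrt{3}/2$ and $|\nu'(\eta)|<1/2$, whence every tangent vector $v=(v',v_n)\in T_\eta\partial\Omega$ obeys $|v_n|\le|v'|/\sqrt{3}$. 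In particular the horizontal projection $\pi(x',x_n)=x'$ is an immersion on $\partial\Omega\cap B_{\varepsilon_0}(0)$ with a uniform bi-Lipschitz constant.

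Next I would show that $\pi$ is injective on $\partial\Omega\cap B_{\varepsilon_0/2}(0)$. Suppose two distinct points $(x_0',a),(x_0',b)$ with $a<b$ both belonged to that set. The vertical segment $\{x_0'\}\times[a,b]$ would stay inside $B_{\varepsilon_0}(0)$ by the triangle inequality. Since $\nu^{(n)}((x_0',a))>\sqrt{3}/2$, points slightly above $(x_0',a)$ lie in $\Omega$; so the first subsequent crossing of the segment with $\partial\Omega$ occurs at some $(x_0',c)\in\partial\Omega\cap B_{\varepsilon_0}(0)$ with $a<c\le b$, and $\Omega$ lies just below $(x_0',c)$. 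This forces the inner normal there to point in the $-e_n$ direction, contradicting $\nu^{(n)}((x_0',c))>\sqrt{3}/2$.

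Combining injectivity with the immersion property, $\pi|_{\partial\Omega\cap B_{\varepsilon_0/2}(0)}$ is a $C^1$ diffeomorphism onto an open neighbourhood of $0'$, and the inverse graph function $\rho$ satisfies $|D'\rho|\le1/\sqrt{3}$. A standard continuation argument using this uniform Lipschitz bound extends $\rho$ over a ball $B'_{\delta_0}(0')$ with $\delta_0>0$ depending only on $\varepsilon_0$ (as long as $(x',\rho(x'))$ stays in $B_{\varepsilon_0/2}(0)$ the normal bound keeps holding and the graph can be continued). Since $\varepsilon_0$, and hence $\delta_0$, is independent of $\xi$, one obtains $\delta(\xi)\ge\delta_0$ uniformly in $\xi\in\partial\Omega$, proving \eqref{H}. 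The main obstacle is the injectivity step: while intuitively clear, it requires careful bookkeeping so that every boundary point met by the vertical segment lies in $B_{\varepsilon_0}(0)$, allowing the uniform normal bound to be applied at each crossing and then extracting the sign contradiction from the local graph structure forced by the positivity of $\nu^{(n)}$.
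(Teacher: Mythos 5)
Your proof is correct and establishes the same conclusion, but by a genuinely different technical route from the paper. The paper works with a finite cover of $\partial\Omega$ and, within each chart, exploits the modulus of continuity of $D'\rho$ together with an explicit orthogonal change of coordinates and the implicit function theorem: it forms $F(\widetilde{x}',\widetilde{x}_n)=\rho((\widetilde{x}Q^{-1}+\widetilde{\xi})')-(\widetilde{x}Q^{-1}+\widetilde{\xi})_n$ and shows $\partial F/\partial\widetilde{x}_n<0$ throughout a slab of uniform size, so strict monotonicity in $\widetilde{x}_n$ simultaneously gives existence and uniqueness of the graph representative. You instead go directly to uniform continuity of the inner normal $\nu$ from compactness (bypassing the finite cover), deduce that the horizontal projection $\pi$ is a uniform bi-Lipschitz immersion, and obtain injectivity by a vertical-segment crossing argument — this crossing argument plays exactly the role that the monotonicity of $F$ in $\widetilde{x}_n$ plays in the paper. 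Both rest on the same geometric fact (the normal cannot rotate by more than a fixed angle over a uniformly small boundary patch), but your version trades the coordinate-change bookkeeping for a topological step. The injectivity step as you present it is sound (the entire vertical segment lies in $B_{\varepsilon_0}(0)$, the first crossing above $a$ is strictly above $a$ by openness of $\Omega$, and at that crossing $\Omega$ sits just below, forcing $\nu^{(n)}<0$, a contradiction). The concluding ``standard continuation argument'' is the one spot worth spelling out: the Lipschitz bound $|D'\rho|\le1/\sqrt3$ gives $|(x',\rho(x'))|\le(2/\sqrt3)|x'|$, so as long as $|x'|<\varepsilon_0\sqrt3/4$ the graph point stays inside $B_{\varepsilon_0/2}(0)$ and the graph can be continued, yielding $\delta_0=\varepsilon_0\sqrt3/4$ (or anything smaller) uniformly in $\xi$.
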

\begin{proof}
  By the finite covering theorem, there exists $\{\xi^{(1)},\cdots,\xi^{(N)}\}\subset\partial\Omega$ such that $$\partial\Omega=\bigcup_{i=1}^N\bigg\{((x^{(i)})',\rho^{(i)}((x^{(i)})'))|\,|(x^{(i)})'|<\frac{\delta_i}{4}\bigg\},$$
  where $x^{(i)}$ is the coordinate under the local coordinate system at $\xi^{(i)}$ and $\delta_i=\delta(\xi^{(i)})$.
  We may assume $\omega(\delta_i)<\frac{1}{2}$, where $\omega$ is the modulus of the continuity of $D'\rho$.
  Let $\delta=\frac{1}{8}\min\{\delta_1,\cdots,\delta_N\}$.

  For any fixed $\widetilde{\xi}\in\partial\Omega$, there exists $i_0\in\{1,\cdots,N\}$ such that
  $$\widetilde{\xi}\in\bigg\{(x',\rho(x'))|\,|x'|<\frac{\delta_{i_0}}{4}\bigg\}.$$
  Here we write the local coordinate $((x^{(i_0)})',\rho^{(i_0)}((x^{(i_0)})'))$ as $(x',\rho(x'))$ for simplicity. We may assume the coordinate system in $\mathbb{R}^n$ is just the local coordinate system at $\xi^{(i_0)}$. Since $\partial\Omega$ is $C^{1}$, we have
  \begin{equation}\label{eq:Hsemi}
  |D'\rho(\widetilde{\xi}')|=|D'\rho(\widetilde{\xi}')-D'\rho(0')|\leq \omega(|\widetilde{\xi}'|)\leq\omega\bigg(\frac{\delta_{i_0}}{4}\bigg).
  \end{equation}
  Clearly, the unit inner normal of $\partial\Omega$ at $0$ and $\widetilde{\xi}$ are
  $$\nu(0)=(0',1)\quad\text{and}\quad\nu(\widetilde{\xi})=\frac{(-D'\rho(\widetilde{\xi'}),1)}{\sqrt{|D'\rho(\widetilde{\xi}')|^2+1}},$$
  respectively. It follows from \eqref{eq:Hsemi} that
  \begin{equation}\label{eq:nu2}
  |\nu(\widetilde{\xi})-\nu(0)|=\sqrt{2\Bigg(1-\frac{1}{\sqrt{|D'\rho(\widetilde{\xi}')|^2+1}}\Bigg)}
  <|D'\rho(\widetilde{\xi}')|\leq\omega\bigg(\frac{\delta_{i_0}}{4}\bigg).
  \end{equation}

\begin{figure}[h]
  \centering
  \includegraphics[width=7.5cm]{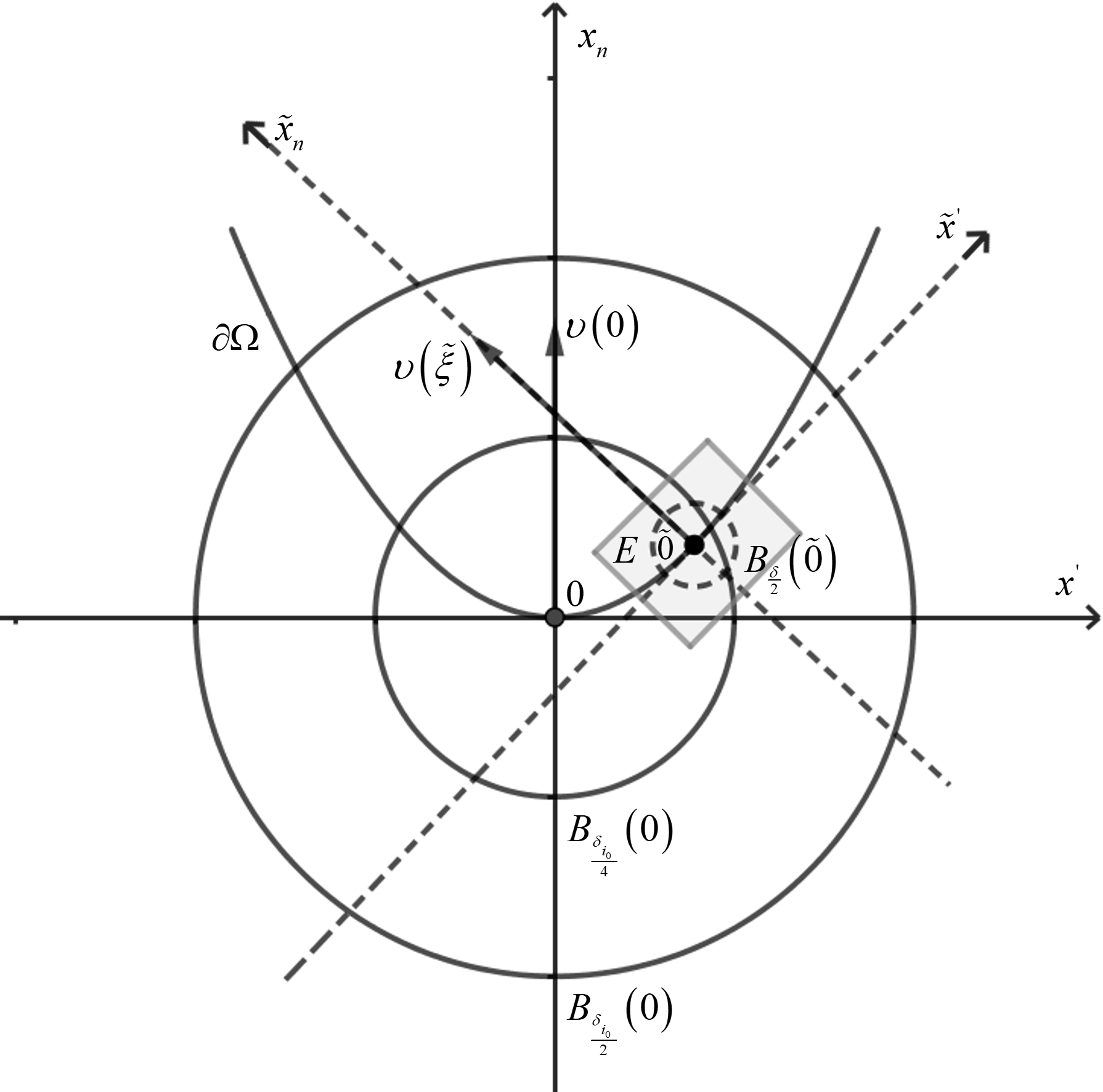}
\end{figure}

  Denote by $\widetilde{x}$ the coordinate under the local coordinate system at $\widetilde{\xi}=\widetilde{0}$. Then there exists a $n\times n$ orthogonal matrix $Q=(q_{ij})$ depending only on $\widetilde{\xi}$ such that
  \begin{equation}\label{eq:trans}
  \widetilde{x}=(x-\widetilde{\xi})Q.
  \end{equation}
  Note that the coordinate of $\nu(\widetilde{\xi})$ under the local coordinate system at $\widetilde{\xi}$ is $(0',1)$. This yields that
  $$\widetilde{\nu}(\widetilde{\xi})=\nu(\widetilde{\xi})Q,$$
  and so
  $$\nu(\widetilde{\xi})=\nu(0)Q^{-1}=\nu(0)Q^T=(0',1)Q^T=(q_{1n}, \cdots,q_{(n-1)n},q_{nn}).$$
  Combining with \eqref{eq:nu2}, we get
  $$|(q_{1n},\cdots,q_{(n-1)n})|<w\bigg(\frac{\delta_{i_0}}{4}\bigg)\quad\text{and}\quad1-w\bigg(\frac{\delta_{i_0}}{4}\bigg)<q_{nn}\leq1.$$
  It follows that
  \begin{equation}\label{eq:ithm}
  (\widetilde{x}Q^{-1})'=(\widetilde{x}Q^{T})'=\widetilde{x}'(Q^{T})'+\widetilde{x}_n(q_{1n},\cdots,q_{(n-1)n}),
  \end{equation}
  where $(Q^{T})'$ denotes the matrix composed of the first $(n-1)$ rows and columns of $Q^T$. We see for $(\widetilde{x}',\widetilde{x}_n)\in E:=\Big\{(\widetilde{x}',\widetilde{x}_n)|\,|\widetilde{x}'|<\delta,~|\widetilde{x}_n|<\frac{\delta}{w\big(\frac{\delta_{i_0}}{4}\big)}\Big\}$,
  \begin{equation*}
  \begin{split}
  |x'|=|(\widetilde{x}Q^{-1})'+\widetilde{\xi}'| & \leq|\widetilde{x}'(Q^{T})'|+|\widetilde{x}_n(q_{1n},\cdots,q_{(n-1)n})|+|\widetilde{\xi}'| \\
  & \leq |\widetilde{x}'|+w\bigg(\frac{\delta_{i_0}}{4}\bigg)|\widetilde{x}_n|+|\widetilde{\xi}'| \\
  & <\frac{\delta_{i_0}}{2}.
  \end{split}
  \end{equation*}

  For $(\widetilde{x}',\widetilde{x}_n)\in E$, set
  $$F(\widetilde{x}',\widetilde{x}_n)=\rho((\widetilde{x}Q^{-1}+\widetilde{\xi})')-(\widetilde{x}Q^{-1}+\widetilde{\xi})_n.$$
  From $\rho\in C^1$, we have $F\in C^1(E)$. Since for $(\widetilde{x}',\widetilde{x}_n)\in E$, $|x'|<\frac{\delta_{i_0}}{2}$, and so $|D'\rho(x')|\leq w\Big(\frac{\delta_{i_0}}{2}\Big)$. It follows that
  \begin{equation*}
  \begin{split}
  \frac{\partial F}{\partial\widetilde{x}_n} & =D'\rho((\widetilde{x}Q^{-1}+\widetilde{\xi})')\cdot(q_{1n},\cdots,q_{(n-1)n})-q_{nn} \\
  & \leq w\bigg(\frac{\delta_{i_0}}{2}\bigg)w\bigg(\frac{\delta_{i_0}}{4}\bigg)-\bigg(1-w\bigg(\frac{\delta_{i_0}}{4}\bigg)\bigg) \\
  & <0,
  \end{split}
  \end{equation*}

  \noindent due to $w(\frac{\delta_{i_0}}{2})<\frac{1}{2}$.
  In view of \eqref{eq:trans}, we have
  $$(\widetilde{x}Q^{-1}+\widetilde{\xi})_n=x_n=\rho(x')=\rho((\widetilde{x}Q^{-1}+\widetilde{\xi})').$$
  That is $F=0$ in $E\cap\{(\widetilde{x}',\widetilde{x}_n)|\,\widetilde{x}=(x-\widetilde{\xi})Q\}$. Therefore, the implicit function theorem yields that there exists a function $\widetilde{\rho}$ such that
  $$\widetilde{x}_n=\widetilde{\rho}(\widetilde{x}')\quad\text{for}~|\widetilde{x}'|<\frac{\delta}{2}.$$
  This finishes the proof.
\end{proof}

\begin{lemma}\label{prop:convex-C1}
  Let $\Omega$ be a bounded open set of $\mathbb{R}^n$ and $\partial\Omega\in C^1$.
  If $\varphi$ is semi-convex with respect to $\partial\Omega$, then $K$ and $|p|$ are bounded on $\partial\Omega$.
\end{lemma}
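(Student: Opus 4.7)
The plan is to combine the uniform lower bound on graph radii from Lemma~\ref{lem:A}, a finite covering of the compact boundary, and an explicit chart-transition computation based on the formula derived in the proof of Lemma~\ref{lem:A}. First, $\delta:=\inf_{\xi\in\partial\Omega}\delta(\xi)>0$ by Lemma~\ref{lem:A}, and by compactness I pick finitely many reference points $\xi^{(1)},\ldots,\xi^{(N)}\in\partial\Omega$ such that every $\xi\in\partial\Omega$ has its coordinate $\xi^{*\prime}$ in the local coordinate system at some $\xi^{(i_0)}$ lying in $B'_{\delta/8}(0')$. The semi-convexity hypothesis at each $\xi^{(i)}$ provides $K_i>0$ such that $\psi^{(i)}+\tfrac{K_i}{2}|\cdot|^2$ is convex on $B'_{\delta_i}(0')$. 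Since $\varphi$ is bounded on the compact $\partial\Omega$, these convex functions are uniformly bounded on their domains, hence uniformly Lipschitz on slightly smaller balls with constant $L$ depending only on $\|\varphi\|_{\infty}$, $K^*:=\max_i K_i$, and $\delta$.

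Second, for an arbitrary $\xi\in\partial\Omega$ in the chart at some $\xi^{(i_0)}$, I extract any subgradient $p^{(i_0)}$ of $\psi^{(i_0)}+\tfrac{K_{i_0}}{2}|\cdot|^2$ at $\xi^{*\prime}$; by Lipschitz control, $|p^{(i_0)}|\leq L$. Rearranging yields the quadratic lower barrier
\[
\psi^{(i_0)}(x^{*\prime})\geq \psi^{(i_0)}(\xi^{*\prime})+(p^{(i_0)}-K_{i_0}\xi^{*\prime})\cdot(x^{*\prime}-\xi^{*\prime})-\tfrac{K_{i_0}}{2}|x^{*\prime}-\xi^{*\prime}|^2
\]
on $B'_{\delta_{i_0}}(0')$, with slope bounded by $L+K^*\delta$. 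I then push this inequality forward to the local chart at $\xi$ via the explicit transition formula $\tilde x'=x^{*\prime}S^{(11)}+\rho^{(i_0)}(x^{*\prime})s^{(21)T}+c'$ of the type derived in Lemma~\ref{lem:A}. Since $S$ is orthogonal and $|s^{(21)T}|$ is bounded by the tilt between the unit normals at $\xi$ and $\xi^{(i_0)}$---uniformly small by the continuity of $\nu$ on compact $\partial\Omega$---and since the transition is bi-Lipschitz with uniform constants, the transported inequality takes the form \eqref{eq:semi-convex} with $K(\xi),|p(\xi)|$ controlled solely by $n$, $K^*$, $L$, $\delta$, and the $C^1$ modulus of $\partial\Omega$.

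The main obstacle is that the chart transition is only $C^1$, not $C^{1,1}$, so its Taylor remainder at the origin is a priori only $o(|\tilde x'|)$ rather than $O(|\tilde x'|^2)$---not directly absorbable into a quadratic term. The saving grace is the specific graph-over-tangent-plane structure: the non-affine contribution is $\rho^{(i_0)}(x^{*\prime})s^{(21)T}$, where $\rho^{(i_0)}$ vanishes together with its gradient at $0'$ by the tangency at $\xi^{(i_0)}$, while the coefficient $s^{(21)T}$ is itself small by closeness of the unit normals. Combining the smallness of these two factors with the restriction $|\xi^{*\prime}|<\delta/8$ from the finite cover is what allows the nonlinear residue to be absorbed at the cost of a controlled enlargement of $K(\xi)$; the uniform bound on $|p(\xi)|$ then follows from that on $K(\xi)$ by the Lipschitz argument applied to $\psi_\xi+\tfrac{K(\xi)}{2}|\cdot|^2$ at $0'$.
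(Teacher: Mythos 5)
Your strategy is the same as the paper's: use Lemma~\ref{lem:A} to get a uniform graph radius $\delta$, cover the compact boundary with finitely many charts, transport the semi-convexity inequality across a chart transition, and bound $|p|$ afterwards from the boundedness of $\varphi$ together with the convexity of $\psi + \tfrac{K}{2}|\cdot|^2$. Your identification of the central difficulty is sharper than the paper's treatment: the transition map between local charts at nearby boundary points, written as $x^{*\prime} \mapsto (x^{*\prime}-\xi^{*\prime})Q_{11} + (\rho^{(i_0)}(x^{*\prime})-\rho^{(i_0)}(\xi^{*\prime}))Q_{21}$, is only $C^1$ when $\rho^{(i_0)}$ is only $C^1$. (The paper's displayed chain in this proof begins with the identity $\widetilde{\psi}(t\widetilde{x}'+(1-t)\widetilde{y}') = \psi(t(\widetilde{x}Q^{-1}+\widetilde{\xi})'+(1-t)(\widetilde{y}Q^{-1}+\widetilde{\xi})')$, which amounts to treating this transition as affine, and it is worth scrutinizing whether that step is justified.)

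However, the ``saving grace'' you invoke does not close the gap. Two points. First, the base point: your quadratic barrier is anchored at $\xi^{*\prime}$ (the image of $\xi$ in the $(i_0)$-chart), so the relevant Taylor remainder of the transition is taken around $\xi^{*\prime}$, not around $0'$. At $\xi^{*\prime}$ the quantities $\rho^{(i_0)}$ and $D'\rho^{(i_0)}$ do not vanish; they are merely small. Second, and more fundamentally, smallness of $s^{(21)T}$ (and of $D'\rho^{(i_0)}(\xi^{*\prime})$) multiplies a remainder that, for a $C^1$ function, is only $o(|x^{*\prime}-\xi^{*\prime}|)$. A small constant times $o(|\cdot|)$ is still $o(|\cdot|)$; it is \emph{not} $O(|\cdot|^2)$. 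Concretely, if $\rho^{(i_0)}$ has a $|x^{*\prime}-\xi^{*\prime}|^{3/2}$-type deviation from affine near $\xi^{*\prime}$, then for any fixed small $\varepsilon>0$ the term $\varepsilon|x^{*\prime}-\xi^{*\prime}|^{3/2}$ dominates $\frac{K}{2}|x^{*\prime}-\xi^{*\prime}|^2$ as $x^{*\prime}\to\xi^{*\prime}$, so no finite enlargement of $K(\xi)$ absorbs it. The bi-Lipschitz control and the small tilt therefore bound the linear and quadratic coefficients in the transported barrier, but they leave an uncontrolled sub-quadratic residual. To complete the argument along these lines you need an additional ingredient that upgrades the remainder to $O(|\cdot|^2)$ uniformly --- for instance a two-sided second-order bound on $\rho^{(i_0)}$ (as would follow from a $C^{1,1}$ boundary, or from both an enclosing and an interior sphere condition), or a fundamentally different mechanism, such as exploiting the convexity of $\rho^{(i_0)}$ when $\Omega$ is convex together with a sign analysis of the coefficient $q\cdot Q_{21}$ multiplying the residual.
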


\begin{proof}
  For any fixed $\xi\in\partial\Omega$, $\partial\Omega$ can be locally represented by the graph of
  $$x_n=\rho(x')\quad\text{for}~|x'|<\delta.$$
  Here $\delta$ is independent of $\xi$ due to Lemma \ref{lem:A}. Take $\widetilde{\xi}\in\partial\Omega\cap\{(x',\rho(x'))|\,|x'|<\frac{\delta}{2}\}$. Denote by $\widetilde{x}$ the coordinate under the local coordinate system at $\widetilde{\xi}$. Then there exists a $n\times n$ orthogonal matrix $Q$ depending only on the local coordinate systems at $\xi$ and $\widetilde{\xi}$ such that
  \begin{equation*}\label{eq:coordinate-t}
  \widetilde{x}=(x-\widetilde{\xi})Q.
  \end{equation*}
  It is easily seen that for $\widetilde{x}\in\partial\widetilde{\Omega}$ with $|\widetilde{x}'|<\widetilde{\delta}$,
  \begin{equation}\label{eq:x'-x}
  |x'|=|(\widetilde{x}Q^{-1}+\widetilde{\xi})'|\leq|\widetilde{x}|+|\widetilde{\xi}'|<\frac{\delta}{2}+\frac{\delta}{2}=\delta\quad\text{for}\ |\widetilde{x}'|<\widetilde{\delta},
  \end{equation}
  where $\widetilde{\delta}:=\frac{\delta}{2\sqrt{1+C_0^2}}$ and $C_0$ is the $C^{0,1}$ norm of $\partial\Omega$. Correspondingly, we write
  \begin{equation}\label{eq:psi2}
  \widetilde{\psi}(\widetilde{x}')=\psi(x')
  =\psi((\widetilde{x}Q^{-1}+\widetilde{\xi})')\quad\text{for}~|\widetilde{x}'|<\widetilde{\delta}.
  \end{equation}
  By \eqref{eq:semi-convex}, there exist positive constants $K(\xi)$ and $K(\widetilde{\xi})$ such that
  $$\psi(x')+\frac{K(\xi)}{2}|x'|^2\quad\text{and}\quad\widetilde{\psi}(\widetilde{x}')+\frac{K(\widetilde{\xi})}{2}|\widetilde{x}'|^2$$
  are convex in $B'_\delta(0')$ and $B'_{\widetilde{\delta}}(\widetilde{0}')$, respectively.

  We first prove that $K$ is bounded on $\partial\Omega\cap\{(x',\rho(x'))|\,|x'|<\frac{\delta}{2}\}$. For $\widetilde{x},\widetilde{y}\in\partial\Omega$ with $|\widetilde{x}'|,|\widetilde{y}'|<\widetilde{\delta}$ and $0<t<1$, we have by \eqref{eq:x'-x},
  \begin{equation}\label{eq:delta'}
  |x'|=|(\widetilde{x}Q^{-1}+\widetilde{\xi})'|<\delta\quad\text{and}\quad|y'|=|(\widetilde{y}Q^{-1}+\widetilde{\xi})'|<\delta.
  \end{equation}
  It follows from \eqref{eq:psi2}, \eqref{eq:delta'} and the semi-convexity of $\psi$ in $B'_{\delta}(0')$ that

\begin{equation*}
\begin{split}
     \widetilde{\psi}({t\widetilde{x}'+(1-t)\widetilde{y}'})=&\,\psi(t(\widetilde{x}Q^{-1}+\widetilde{\xi})'+(1-t)(\widetilde{y}Q^{-1}+\widetilde{\xi})') \\ &\,+\frac{K(\xi)}{2}|t(\widetilde{x}Q^{-1}+\widetilde{\xi})'+(1-t)(\widetilde{y}Q^{-1}+\widetilde{\xi})'|^2 \\
     &\,-\frac{K(\xi)}{2}|t(\widetilde{x}Q^{-1}+\widetilde{\xi})'+(1-t)(\widetilde{y}Q^{-1}+\widetilde{\xi})'|^2\\
\end{split}
\end{equation*}
\begin{equation*}
\begin{split}
     \leq &\, t\bigg(\psi((\widetilde{x}Q^{-1}+\widetilde{\xi})')+\frac{K(\xi)}{2}|(\widetilde{x}Q^{-1}+\widetilde{\xi})'|^2\bigg) \\ &\,+(1-t)\bigg(\psi((\widetilde{y}Q^{-1}+\widetilde{\xi})')+\frac{K(\xi)}{2}|(\widetilde{y}Q^{-1}+\widetilde{\xi})'|^2\bigg) \\
     &\,-\frac{K(\xi)}{2}|t(\widetilde{x}Q^{-1}+\widetilde{\xi})'+(1-t)(\widetilde{y}Q^{-1}+\widetilde{\xi})'|^2\\
     = &\, t\widetilde{\psi}(\widetilde{x}')+(1-t)\widetilde{\psi}(\widetilde{y}')
     +\frac{t(1-t)K(\xi)}{2}|((\widetilde{x}-\widetilde{y})Q^{-1})'|^2. \\
\end{split}
\end{equation*}
Taking $K(\widetilde{\xi})=(1+C_0^2)K(\xi)$, we obtain for $\widetilde{x}',\widetilde{y}'\in B'_{\widetilde{\delta}}(\widetilde{0})$,
\begin{equation*}
\begin{split}
& \widetilde{\psi}({t\widetilde{x}'+(1-t)\widetilde{y}'})+\frac{K(\widetilde{\xi})}{2}|{t\widetilde{x}'+(1-t)\widetilde{y}'}|^2 \\
\leq &\, t\widetilde{\psi}(\widetilde{x}')+(1-t)\widetilde{\psi}(\widetilde{y}')+\frac{t(1-t)K(\widetilde{\xi})}{2}|\widetilde{x}'-\widetilde{y}'|^2
+\frac{K(\widetilde{\xi})}{2}|{t\widetilde{x}'+(1-t)\widetilde{y}'}|^2 \\
= & \, t\bigg(\widetilde{\psi}(\widetilde{x}')+\frac{K(\widetilde{\xi})}{2}|\widetilde{x}'|^2\bigg)+(1-t)\bigg(\widetilde{\psi}(\widetilde{y}')
+\frac{K(\widetilde{\xi})}{2}|\widetilde{y}'|^2\bigg).
\end{split}
\end{equation*}
Hence, $K$ is uniformly bounded on $\partial\Omega\cap\{(x',\rho(x'))|\,|x'|<\frac{\delta}{2}\}$ and thus is bounded on $\partial\Omega$ by a finite cover argument.

We proceed to prove that $p$ is bounded on $\partial\Omega$.  Since $\psi(x')+\frac{K(\xi)}{2}|x'|^2$ is convex in $B'_{\delta}(0')$, we have for $x'=\frac{\delta p(\xi)}{2|p(\xi)|}\in B'_{\delta}(0')$,
$$|p(\xi)|\leq\frac{2}{\delta}\bigg(\psi(x')-\psi(0')+\frac{K(\xi)}{2}|x'|^2\bigg)\leq\frac{2}{\delta}\bigg(2\sup_{\partial\Omega}|\varphi|+\frac{\delta^2K(\xi)}{8}
\bigg)\leq C,$$
where $C$ is a constant independent of $\xi$. This completes the proof.
\end{proof}

Summing up, we now have all ingredients to present the proof of Theorem \ref{thm:main}. We start the proof by proving a special and simple case of Theorem \ref{thm:main} where $A=I$ and $b=0$.

\begin{prop}\label{lem:main}
  Let $\Omega$ be a domain of $\mathbb{R}^n$ satisfying a uniform enclosing sphere condition,
  $n\geq3$, $\partial\Omega\in C^1$. Let $\varphi$ be semi-convex with respect to $\partial\Omega$. Then there exists some constant $c_*$, such that \begin{equation}\label{eq:DiriProb2}
  \begin{cases}
  \det(D^2u)=1\quad\text{in}~\mathbb{R}^n\setminus\overline{\Omega},\\
  u=\varphi\quad\text{on}~\partial\Omega,\\
  \lim_{|x|\to\infty}\Big(u(x)-\Big(\frac{1}{2}|x|^2+c\Big)\Big)=0
  \end{cases}
  \end{equation}
  has a viscosity solution in $C^0(\mathbb{R}^n\setminus\Omega)$ if and only if $c\geq c_*$, where $c_*$ depends only on $n$, $\Omega$ and $\varphi$.
\end{prop}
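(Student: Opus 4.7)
The plan is to assemble Propositions \ref{lem:det=1}, \ref{lem:asym}, and \ref{lem:perron0} via Perron's method, using Lemmas \ref{lem:A} and \ref{prop:convex-C1} to promote the pointwise boundary threshold to a uniform one. For the existence direction, take $c$ sufficiently large and let $u$ be the Perron function \eqref{eq:perron-sol}. By Proposition \ref{lem:det=1}, $u$ is a viscosity solution of $\det(D^2u)=1$ in $\mathbb{R}^n\setminus\overline{\Omega}$; by Proposition \ref{lem:asym}, $u(x)-(\tfrac12|x|^2+c)\to 0$ as $|x|\to\infty$; and by Proposition \ref{lem:perron0}, $u$ continuously attains $\varphi(\xi)$ at each $\xi\in\partial\Omega$ provided $c>c_0(\xi)$.

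The principal step is to bound $c_0(\xi)$ uniformly in $\xi$. Recall that $c_0(\xi)$ depends only on $n$, $\delta(\xi)$, $r(\xi)$, $K(\xi)$, $|p(\xi)|$, $\sup_{\partial\Omega}|\varphi|$, and the $C^{0,1}$ norm of $\partial\Omega$. The uniform enclosing sphere condition bounds $r(\xi)$ from above and forces $\Omega$ to be bounded, so $\partial\Omega$ is compact and Lipschitz; Lemma \ref{lem:A} produces $\inf_{\partial\Omega}\delta(\xi)>0$; Lemma \ref{prop:convex-C1} provides uniform upper bounds on $K(\xi)$ and $|p(\xi)|$; and local continuity of $\varphi$ from semi-convexity, combined with compactness of $\partial\Omega$, gives $\sup_{\partial\Omega}|\varphi|<\infty$. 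Together these yield a single constant $c_0^{*}$ with $c_0(\xi)\le c_0^{*}$ for all $\xi\in\partial\Omega$.

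Fix $r_1,r_2>0$ with $B_{r_1}(0)\subset\Omega\subset B_{r_2}(0)$ after a translation, and define
$$c_{*}:=\max\Bigl\{c_0^{*},\ \mu(-r_1^n),\ \sup_{\partial\Omega}\bigl(\varphi(x)-\tfrac12|x|^2\bigr)\Bigr\}.$$
For every $c>c_{*}$ all three propositions apply simultaneously and $u$ solves \eqref{eq:DiriProb2}. The endpoint $c=c_{*}$ is handled by a limit: the Perron solutions $u_{c_k}$ for $c_k\downarrow c_{*}$ are monotone non-increasing (since $\mathcal{S}_c^\varphi$ is non-decreasing in $c$) and locally uniformly bounded between the radial barrier $w_\alpha$ of \eqref{eq:w-alpha} and the quadratic $\tfrac12|x|^2+c_1$ for any fixed $c_1>c_{*}$; hence they converge locally uniformly to a viscosity solution at $c_{*}$ via Lemma \ref{lem:stablity}, and the boundary and asymptotic values pass to the limit by a barrier squeeze against the same functions appearing in Proposition \ref{lem:perron0}.

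The converse direction follows the Li--Lu argument in \cite{Li-Lu}. Monotonicity of the admissible set is immediate: any solution at $c_1$ lies in $\mathcal{S}_{c_2}^\varphi$ for every $c_2\ge c_1$, so Perron's method produces a solution at $c_2$. Nonexistence for $c$ below the infimum of admissible values is ruled out by comparing against the radial profiles $w_\alpha$ and using the sharpness of their asymptotic constants exactly as in \cite[Theorem 1.2]{Li-Lu}. The infimum of admissible $c$ is therefore $c_{*}$, completing the proof. The hard part is the uniform boundary threshold in the second paragraph; once the geometric hypotheses are converted into uniform control on the inputs of Proposition \ref{lem:perron0}, everything else is either Perron bookkeeping or a direct import from \cite{Li-Lu}.
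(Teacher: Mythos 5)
Your first two paragraphs correctly reproduce the paper's Step~1: assembling Propositions \ref{lem:det=1}, \ref{lem:asym}, \ref{lem:perron0}, and using Lemmas \ref{lem:A} and \ref{prop:convex-C1} together with the uniform enclosing sphere condition to turn the pointwise threshold $c_0(\xi)$ into a uniform $c_0^*$. That portion matches the paper.

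The gap is in your third and fourth paragraphs. You \emph{define}
\[
c_* := \max\Bigl\{c_0^*,\ \mu(-r_1^n),\ \sup_{\partial\Omega}\bigl(\varphi(x)-\tfrac12|x|^2\bigr)\Bigr\},
\]
and then assert at the end that ``the infimum of admissible $c$ is therefore $c_*$.'' But this explicit quantity is only an \emph{a priori upper bound} for the values of $c$ at which the Perron construction directly produces a solution; it need not coincide with the sharp threshold. In the paper this quantity plays the role of $c_1$ in Step~1, and the sharp constant is obtained only after Steps~2--4: Step~2 gives a lower bound $c_2$ below which no (sub)solution can satisfy both the boundary and asymptotic data; Step~3 shows that if a solution exists at some $c_3 < c_1$, then one exists for every intermediate $c_4 \in (c_3, c_1)$, so the admissible set is an interval; Step~4 then defines $c_*$ as the infimum of that interval. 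Your proposal conflates the explicit threshold with the infimum and never proves they coincide, so the ``only if'' direction of the statement is not established. Your endpoint-limit argument in paragraph three is aimed at the wrong constant: the subtle endpoint is the true infimum, not your explicit $\tilde c$.

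Relatedly, your sentence ``any solution at $c_1$ lies in $\mathcal{S}_{c_2}^\varphi$ for every $c_2\ge c_1$, so Perron's method produces a solution at $c_2$'' glosses over the delicate point of Step~3. Membership in the subfunction family only guarantees that the Perron function satisfies the equation and the asymptotic condition and is bounded below by $u_{c_1}$ near $\partial\Omega$. To conclude that the Perron function actually \emph{attains} $\varphi$ one still needs an upper barrier, and under the weakened regularity hypotheses on $\partial\Omega$ the paper constructs this upper barrier by solving a Laplace Dirichlet problem on $B_R(0)\setminus\overline{\Omega}$ via the exterior cone condition (Proposition~\hyperlink{B}{B}); this is exactly where the present proof departs from the argument in \cite{Li-Lu}. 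Your sketch omits this barrier and therefore does not close the argument even after $c_*$ is redefined as the infimum.
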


\begin{proof}
We divide the proof into four steps.

\textbf{Step 1.} There is a constant $c_1$ such that for $c\geq c_1$, \eqref{eq:DiriProb} has a viscosity solution.
Let
\begin{equation}\label{eq:perron-sol2}
u(x)=\sup\{v(x)|\ v\in\mathcal{S}_c^\varphi\}\quad\text{for }x\in\mathbb{R}^n\setminus\overline{\Omega}.
\end{equation}
By Propositions \ref{lem:det=1} and \ref{lem:asym}, $u$ is a viscosity solution of \eqref{eq0:MA=1} in $\mathbb{R}^n\setminus\overline{D}$ and approaches $\frac{1}{2}|x|^2+c$ at infinity when $c$ satisfies \eqref{eq:c1}, where we used the fact that $\varphi$ is bounded on $\partial\Omega$. By the uniform enclosing sphere condition and Lemma \ref{prop:convex-C1}, $r(\xi)$, $K(\xi)$ and $|p(\xi)|$ are bounded on $\partial\Omega$. By Lemma \ref{lem:A}, $\delta(\xi)$ has a positive lower bound on $\partial\Omega$. Note that $\Omega$ satisfies a enclosing sphere condition on $\partial\Omega$ and thus is convex. Therefore, Proposition \ref{lem:perron0} yields that there exists a constant $c_0$, such that for every $c>c_0$, $u\in C^0(\mathbb{R}^n\setminus\Omega)$ and $u=\varphi$ on $\partial\Omega$. Consequently, \eqref{eq:DiriProb} has a viscosity solution for $c\geq c_1$ with $c_1$ sufficiently large.

We can find the sharp $c_*$ almost without change as in \cite[Theorem 1.2]{Li-Lu}, so we briefly sketch the rest of the proof and omit the details.

\textbf{Step 2.} There is a constant $c_2$ such that for $c<c_2$, there is no viscosity subsolution $\underline{u}$ of \eqref{eq0:MA=1} in $\mathbb{R}^n\setminus\overline{\Omega}$ satisfying
\begin{equation}\label{eq:DiriProb-1}
  \begin{cases}
    \underline{u}=\varphi\quad \mbox{on } \partial\Omega, \\
    \lim_{|x|\to\infty}\left(\underline{u}(x)-\left(\frac{1}{2}|x|^2+c\right)\right)=0.
  \end{cases}
\end{equation}
Precisely, here $c_2$ depends only on $n$, the diameter of $\Omega$ and $\inf_{\partial\Omega}\varphi$, but not depends on the $C^2$ regularity of $\Omega$ and $\varphi$.

\textbf{Step 3.} If \eqref{eq:DiriProb} has a viscosity solution $u_{c_3}$ with $c=c_3$ $(c_3<c_1)$, then \eqref{eq:DiriProb} has a viscosity solution for all $c_4\in(c_3,c_1)$. When proving that there is a viscosity solution of
\begin{equation*}
  \begin{cases}
    \det(D^2u)=1 & \mbox{in } \mathbb{R}^n\setminus\overline{\Omega}, \\
    u=\varphi & \mbox{on } \partial\Omega \\
  \end{cases}
\end{equation*}
provided that there is a viscosity subsolution $\underline{u}_{c_4}$ of \eqref{eq0:MA=1} in $\mathbb{R}^n\setminus\overline{\Omega}$ satisfying \eqref{eq:DiriProb-1} with $c=c_4$, the proof is slightly different from \cite{Li-Lu} due to the weaker condition on $\Omega$. Indeed, let $u$ be as in \eqref{eq:perron-sol2} with $c=c_4$.
Then $\mathcal{S}_{c_4}^\varphi$ is nonempty and $u\leq u_{c_3}$.
Then $u$ satisfies the equation due to Steps 2-3 of the proof of Proposition \ref{lem:det=1}. Fix $R>0$ such that $\Omega\subset\subset B_{R}(0)$. Recalling that $\varphi\in C^0(\partial\Omega)$,  $\Omega$ is convex and $B_{R}(0)\setminus\overline{\Omega}$ satisfies the exterior cone condition. By Proposition \hyperlink{B}{B}, there exists $h\in C^0(\overline{B_{R}(0)\setminus\Omega})$ satisfying
\begin{equation*}
  \begin{cases}
    \Delta h=0 & \mbox{in } B_{R}(0)\setminus\overline{\Omega}, \\
    h=\varphi & \mbox{on } \partial\Omega, \\
    h=\max_{\partial B_{R}(0)}u & \mbox{on }\partial B_{R}(0).
  \end{cases}
\end{equation*}
The comparison principle implies that $u\leq h$ in $\overline{B_{R}(0)\setminus\Omega}$. It follows that
$$\varphi(x_0)=\underline{u}_{c_4}(x_0)\leq\lim_{\mathbb{R}^n\setminus\overline{\Omega}\ni x\to x_0}u(x)\leq h(x_0)=\varphi(x_0).$$

\textbf{Step 4.} The sharp constant $c_*$ is determined by
$$c_*=\inf\{c\in\mathbb{R}|\,\eqref{eq:DiriProb}~\text{has~a~viscosity~solution}\}.$$

\end{proof}

Now, we give the proof of Theorem \ref{thm:main}.

\begin{proof}[Proof of Theorem \ref{thm:main}]
For $A\in\mathcal{A}$, there exists a $n\times n$ orthogonal matrix $P_1$ such that $A=P_1P_2P_2^TP_1^T$, where $P_2=\text{diag}(\sqrt{\lambda_1(A)},\cdots,\sqrt{\lambda_n(A)})$ and $\lambda_1(A), \cdots, \lambda_n(A)$ are the eigenvalues of $A$. Denote $Q=P_1P_2$. Let
$$\widetilde{x}=xQ,\quad\widetilde{\Omega}=\{xQ|\,x\in\Omega\},$$
and
$$\widetilde{\varphi}(\widetilde{x})=\varphi(x)-b\cdot x=\varphi(\widetilde{x}Q^{-1})-b\cdot(\widetilde{x}Q^{-1}).$$
If $\widetilde{\Omega}$ and $\widetilde{\varphi}$ satisfy the assumptions in Proposition \ref{lem:main}, then we conclude that there is $c_*$ depending only on $n$, $\widetilde{\Omega}$, $\widetilde{\varphi}$ such that there a viscosity solution $\widetilde{u}\in C^0(\mathbb{R}^n\setminus\widetilde{\Omega})$ of
\begin{equation}\label{eq:DiriProb3}
\begin{cases}
  \det(D^2\widetilde{u})=1\quad\text{in}~\mathbb{R}^n\setminus\overline{\widetilde{\Omega}},\\
  \widetilde{u}=\widetilde{\varphi}\quad\text{on}~\partial\widetilde{\Omega},\\
  \lim_{|\widetilde x|\to\infty}\Big(\widetilde{u}(\widetilde{x})-\Big(\frac{1}{2}|\widetilde{x}|^2+c\Big)\Big)=0
\end{cases}
\end{equation}
if and only if $c\geq c_*$. Let
$$u(x)=\widetilde{u}(\widetilde{x})+b\cdot x=\widetilde{u}(xQ)+b\cdot x.$$
Then direct calculation shows that $u$ is a viscosity solution of \eqref{eq:DiriProb} in Theorem \ref{thm:main} when $c\geq c_*$. While for $c<c_*$, if \eqref{eq:DiriProb} has a viscosity solution $u$, then
$$\widetilde{u}(\widetilde{x})=u(x)-b\cdot x$$
is a viscosity solution of \eqref{eq:DiriProb3}, which is a contradiction! Hence, we establish Theorem \ref{thm:main}.

It remains to prove that $\widetilde{\Omega}$ satisfies a uniform enclosing sphere condition, $\partial\widetilde{\Omega}\in C^1$, and $\widetilde{\varphi}$ is semi-convex with respect to $\partial\widetilde{\Omega}$. Since $\partial\Omega$ is $C^1$, so is $\partial\widetilde{\Omega}$. Denote by $\partial B_{r(\xi)}(y(\xi))$ an enclosing sphere of $\Omega$ at $\xi$ and $r=\max_{\xi\in\partial\Omega}r(\xi)$. Then $\partial B_r(y(\xi))$ is also an enclosing sphere of $\Omega$ at $\xi$. Denote
$$E_\xi=\{xQ|\,x\in B_r(y(\xi))\}.$$
Then $E_\xi$ is an ellipsoid and
$$\xi Q\in\partial\widetilde{\Omega}\cap\partial E_{\xi}\quad\text{and}\quad\widetilde{\Omega}\subset E_\xi.$$
Thus $\widetilde{\Omega}$ satisfies an enclosing sphere condition with a uniform radius. Hence, $\widetilde{\Omega}$ satisfies a uniform enclosing sphere condition.

We continue to prove that $\widetilde\varphi$ is semi-convex with respect to $\partial\widetilde\Omega$. Fix $\xi\in\partial\Omega$ and denote $\widetilde{\xi}=\xi Q$. Without losing the generality, we may assume the coordinate system in $\mathbb{R}^n$ is just the local coordinate system at $\xi\in\partial\Omega$. $\partial\Omega$ can be locally represented by the graph of
$$x_n=\rho(x')\quad\text{for}~|x'|<\delta(\xi),$$
for some $\delta(\xi)>0$. Let
$\psi(x')=\varphi(x',\rho(x'))$. Since $\varphi$ is semi-convex with respect to $\partial\Omega$ at $\xi$, there exists $K(\xi)>0$ such that for $x',y'\in B'_{\delta(\xi)}(0')$ and $0<t<1$,
\begin{equation}\label{eq:semi}
\begin{split}
& \psi(tx'+(1-t)y')+\frac{K(\xi)}{2}|tx'+(1-t)y'|^2 \\
\leq & t\bigg(\psi(x')+\frac{K(\xi)}{2}|x'|^2\bigg)+(1-t)\bigg(\psi(y')+\frac{K(\xi)}{2}|y'|^2\bigg).
\end{split}
\end{equation}
Suppose that $\partial\widetilde{\Omega}$ can be locally represented by the graph of
$$\widetilde{x}_n=\widetilde{\rho}(\widetilde{x}')\quad\text{for}~|\widetilde{x}'|<\widetilde{\delta}(\widetilde{\xi}),$$
for some $\widetilde{\delta}(\widetilde{\xi})>0$. We may assume
$$\widetilde{\delta}(\widetilde{\xi})\leq\sqrt{\frac{\min_{1\leq i\leq n}\lambda_i(A)}{1+C_0^2}}\delta(\xi),$$
where $C_0$ is the Lipschitz norm of $\partial\widetilde{\Omega}$ depending only on $\partial\Omega$ and $A$. Then for $\widetilde{x}=(\widetilde{x}',\widetilde{\rho}(\widetilde{x}'))\in\partial\widetilde{\Omega}$ with $|\widetilde{x}'|<\widetilde{\delta}(\widetilde{\xi})$, we have for $x=\widetilde{x}Q^{-1}$,
\begin{equation}\label{eq:qq}
\begin{split}
|x'|& \leq|\widetilde{x}Q^{-1}|\leq\sqrt{\max_{1\leq i\leq n}\lambda_i(Q^{-1}(Q^{-1})^T)}|\widetilde{x}|\\
&\leq\sqrt{\max_{1\leq i\leq n}\lambda_i(A^{-1})}\sqrt{1+C_0^2}|\widetilde{x}'|<\delta(\xi).
\end{split}
\end{equation}
Thus we get for $\widetilde{x}=(\widetilde{x}',\widetilde{\rho}(\widetilde{x}'))\in\partial\widetilde{\Omega}$ with $|\widetilde{x}'|<\widetilde{\delta}(\widetilde{\xi})$,
\begin{equation*}
\begin{split}
\widetilde{\psi}(\widetilde{x}'):&=\widetilde{\varphi}(\widetilde{x}',\widetilde{\rho}(\widetilde{x}'))=\widetilde{\varphi}(\widetilde{x})=\varphi(x)-b\cdot x \\
&=\varphi((\widetilde{x}Q^{-1})',\rho((\widetilde{x}Q^{-1})'))-b\cdot(\widetilde{x}Q^{-1}) \\
&=\psi((\widetilde{x}Q^{-1})')-b\cdot(\widetilde{x}Q^{-1}).
\end{split}
\end{equation*}
Replacing $x'$ and $y'$ in \eqref{eq:semi} by $(\widetilde{x}Q^{-1})'$ and $(\widetilde{y}Q^{-1})'$ respectively, together with \eqref{eq:qq}, we obtain
\begin{equation*}
\begin{split}
&\, \widetilde{\psi}\left(t\widetilde{x}'+(1-t)\widetilde{y}'\right) \\
= &\, \psi((t(\widetilde{x}Q^{-1})'+(1-t)(\widetilde{y}Q^{-1})')-b\cdot(t\widetilde{x}Q^{-1}+(1-t)\widetilde{y}Q^{-1}) \\
\leq &\,t\bigg(\psi((\widetilde{x}Q^{-1})')+\frac{K(\xi)}{2}|(\widetilde{x}Q^{-1})'|^2\bigg) +(1-t)\bigg(\psi((\widetilde{y}Q^{-1})')+\frac{K(\xi)}{2}|(\widetilde{y}Q^{-1})'|^2\bigg) \\
&-b\cdot(t\widetilde{x}Q^{-1}+(1-t)\widetilde{y}Q^{-1})-\frac{K(\xi)}{2}\left|t(\widetilde{x}Q^{-1})'+(1-t)(\widetilde{y}Q^{-1})'\right|^2 \\
= &\,t(\psi((\widetilde{x}Q^{-1})')-b\cdot(\widetilde{x}Q^{-1}))+(1-t)(\psi(\widetilde{y}Q^{-1})'-b\cdot (\widetilde{y}Q^{-1})) \\
&\,+\frac{t(1-t)K(\xi)}{2}\left|(\widetilde{x}Q^{-1})'-(\widetilde{y}Q^{-1})'\right|^2 \\
= &\,t\widetilde{\psi}(\widetilde{x}')+(1-t)\widetilde{\psi}(\widetilde{y}')+\frac{t(1-t)K(\xi)}{2}\left|((\widetilde{x}-\widetilde{y})Q^{-1})'\right|^2.
\end{split}
\end{equation*}
Taking
$$\widetilde{K}(\widetilde{\xi})=\frac{1+C_0^2}{\min_{1\leq i\leq n}\lambda_i(A)}K(\xi),$$ we get
\begin{equation*}
\begin{split}
&\, \widetilde{\psi}\left(t\widetilde{x}'+(1-t)\widetilde{y}'\right)+\frac{\widetilde{K}(\widetilde{\xi})}{2}\left|t\widetilde{x}'+(1-t)\widetilde{y}'\right|^2 \\
\leq &\,t\bigg(\widetilde{\psi}(\widetilde{x}')+\frac{\widetilde{K}(\widetilde{\xi})}{2}|\widetilde{x}'|^2\bigg) +(1-t)\bigg(\widetilde{\psi}(\widetilde{y}')+\frac{\widetilde{K}(\widetilde{\xi})}{2}|\widetilde{y}'|^2\bigg) \\
&\, +\frac{t(1-t)K(\xi)}{2}\left|((\widetilde{x}-\widetilde{y})Q^{-1})'\right|^2-\frac{t(1-t)\widetilde{K}(\widetilde{\xi})}{2}\left|\widetilde{x}'-\widetilde{y}'\right|^2 \\
\leq &\,t\bigg(\widetilde{\psi}(\widetilde{x}')+\frac{\widetilde{K}(\widetilde{\xi})}{2}|\widetilde{x}'|^2\bigg) +(1-t)\bigg(\widetilde{\psi}(\widetilde{y}')+\frac{\widetilde{K}(\widetilde{\xi})}{2}|\widetilde{y}'|^2\bigg),
\end{split}
\end{equation*}
where we used
$$|((\widetilde{x}-\widetilde{y})Q^{-1})'|\leq\sqrt{\max_{1\leq i\leq n}\lambda_i(A^{-1})}\sqrt{1+C_0^2}|\widetilde{x}'-\widetilde{y}'|$$
in the last ``$\leq$'' as in \eqref{eq:qq}. That is, $\widetilde{\varphi}$ is semi-convex with respect to $\partial\widetilde{\Omega}$ at $\widetilde{\xi}$.
This finishes the proof of Theorem \ref{thm:main}.
\end{proof}

\begin{remark}
From the proof of Proposition \ref{lem:main} and Theorem \ref{thm:main}, we see that Theorem \ref{thm:main} actually holds as long as $\Omega$ satisfies \eqref{H} and a uniform enclosing sphere condition. We note that there exist bounded and convex domains but not satisfy \eqref{H}; see Example 3 in Appendix.
\end{remark}

\section*{Appendix}
Here we prove some conclusions that are involved in the main context. We also include some examples to demonstrate that the conditions in Theorem \ref{thm:main} holds for more boundary values and domains than that in Theorem \ref{thm:CL2003}. Meanwhile, we give some examples to further understand the conditions in Theorem \ref{thm:main}.

\begin{propA}\label{prop:equivalence}
 \hypertarget{A}{Let} $\Omega$ be a domain of $\mathbb{R}^n$, $\partial\Omega\in C^2$. If $\Omega$ satisfies a uniform enclosing sphere condition, then $\Omega$ is bounded and strictly convex. The converse is also true.
\end{propA}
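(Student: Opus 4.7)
The proof hinges on a single local fact: the enclosing sphere radius $r(\xi)$ at $\xi\in\partial\Omega$ controls the second fundamental form of $\partial\Omega$ at $\xi$ from below, and conversely a lower bound on the second fundamental form produces an enclosing tangent ball. The plan is to upgrade this local dictionary to a global statement in each direction.

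For necessity, boundedness is immediate since $\Omega\subset B_{r(\xi)}(y(\xi))$ for any single $\xi$. To get convexity, I would prove the representation
\[
\overline{\Omega}=\bigcap_{\xi\in\partial\Omega}\overline{B_{r(\xi)}(y(\xi))}.
\]
One inclusion is clear; for the reverse, given $x\notin\overline{\Omega}$, pick $\xi\in\partial\Omega$ of minimum distance to $x$. The $C^1$ regularity forces $x-\xi$ to point along the outer normal $-\nu(\xi)$, and the $C^1$-tangency of $\partial\Omega$ and $\partial B_{r(\xi)}(y(\xi))$ at $\xi$ forces $y(\xi)=\xi+r(\xi)\nu(\xi)$, whence $|x-y(\xi)|=r(\xi)+|x-\xi|>r(\xi)$, contradicting $x\in\overline{B_{r(\xi)}(y(\xi))}$. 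Thus $\overline{\Omega}$ is an intersection of closed balls, hence convex. For strict convexity in the Caffarelli--Li sense I would compare $\partial\Omega$ with the sphere locally: in the local coordinate system at $\xi$, $\partial\Omega$ is the graph $x_n=\rho(x')$ with $\rho(0')=0$, $D'\rho(0')=0$, and the inclusion $\Omega\subset B_{r(\xi)}(y(\xi))$ forces $\rho(x')\geq r(\xi)-\sqrt{r(\xi)^2-|x'|^2}$; matching second-order Taylor expansions at $0'$ yields $D'^2\rho(0')\geq\frac{1}{r(\xi)}I_{n-1}$, so the principal curvatures of $\partial\Omega$ at $\xi$ are at least $1/\sup_{\partial\Omega}r(\cdot)>0$.

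For sufficiency, compactness of $\partial\Omega$ and continuity of the principal curvatures provide a uniform lower bound $\kappa_0>0$. The plan is to take $y_\xi=\xi+r\nu(\xi)$ with $r$ chosen independent of $\xi$ and to verify $\Omega\subset B_r(y_\xi)$. Expanding $|x-y_\xi|^2$ reduces this inclusion to the uniform inequality
\[
|x-\xi|^2<2r(x-\xi)\cdot\nu(\xi)\qquad\text{for every }x\in\overline{\Omega}\setminus\{\xi\},
\]
so the real task is to produce a bound $(x-\xi)\cdot\nu(\xi)\geq c|x-\xi|^2$ on $\overline{\Omega}$ with $c>0$ independent of $\xi$. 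I would split into near/far regimes: on $\{|x-\xi|\leq\delta_0\}$ the $C^2$ Taylor expansion together with $\kappa_0$ gives $(x-\xi)\cdot\nu(\xi)\geq(\kappa_0/4)|x-\xi|^2$ with $\delta_0$ uniform in $\xi$; on $\{|x-\xi|\geq\delta_0\}$ strict convexity combined with compactness on $\partial\Omega\times\overline{\Omega}$ furnishes a positive lower bound $c_0$, which upgrades to a quadratic one via $|x-\xi|\leq\mathrm{diam}(\Omega)$. Any $r>1/(2c)$ then supplies an enclosing ball of the same radius at every $\xi$.

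The main obstacle lies on the sufficiency side: extracting a single constant $c$ valid at every boundary point requires gluing the $C^2$ Taylor estimate on a uniform-in-$\xi$ neighborhood of the diagonal to a global compactness-plus-strict-convexity argument off the diagonal. In necessity the delicate step is instead justifying that the normals of the ball and of $\partial\Omega$ agree at the contact point $\xi$, which uses $C^1$ regularity together with the inclusion $\Omega\subset B_{r(\xi)}(y(\xi))$.
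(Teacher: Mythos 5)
Your proposal is correct and follows essentially the same strategy as the paper in both directions. For the forward (necessity) direction, both arguments compare $\partial\Omega$ against the enclosing sphere in the local coordinate system at $\xi$ and read off $D'^2\rho(0')\geq r(\xi)^{-1}I_{n-1}>0$ from the resulting inequality $\rho(x')\geq r-\sqrt{r^2-|x'|^2}\geq|x'|^2/(2r)$. You are somewhat more careful than the paper in two respects: you justify, via $C^1$-tangency of $\partial\Omega$ and $\partial B_{r(\xi)}(y(\xi))$ at $\xi$, that the center $y(\xi)$ must lie on the normal line $\xi+\mathbb{R}\nu(\xi)$ (the paper simply writes $y=(0',r)$ without comment), and you explicitly recover the global convexity of $\overline{\Omega}$ from the representation $\overline{\Omega}=\bigcap_\xi\overline{B_{r(\xi)}(y(\xi))}$, whereas the paper passes directly from positive principal curvatures to ``strictly convex.'' For the converse, both arguments split $\overline{\Omega}\setminus\{\xi\}$ into a near regime, where a uniform positive curvature lower bound gives a quadratic estimate on $(x-\xi)\cdot\nu(\xi)$, and a far regime, where a compactness argument (equivalently, in the paper's coordinates, the bound $z_n\geq\tfrac{\varepsilon}{2}\delta^2$ for $|z'|\geq\delta$) gives a uniform positive lower bound that upgrades to a quadratic one via the diameter; the paper then picks the explicit radius $r=\max\{2/\varepsilon,\,(\operatorname{diam}\Omega+1)^2/(\varepsilon\delta^2)+\tfrac{\varepsilon}{4}\delta^2\}$. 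Your sketch of this gluing is structurally sound; filling in the constants would produce essentially the paper's computation.
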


\begin{proof}
If $\Omega$ satisfies a uniform enclosing sphere condition, then $\Omega$ is clearly bounded. Under the local coordinate system at $\xi\in\partial\Omega$, there exists an enclosing sphere $\partial B_r(y)$ at $\xi$ with $y=(0',r)$. It is clear from $\partial\Omega\subset\overline{B_r(y)}$ that
  $$|x'|^2+(\rho(x')-r)^2\leq r^2.$$
  This gives
  \begin{equation*}
  \rho(x')\geq\frac{1}{2r}|x'|^2.
  \end{equation*}
  Since $\rho(0')=0$ and $D'\rho(0')=0'$, we have $D'^2\rho(0')>0$. Hence, $\Omega$ is strictly convex.

  Conversely, if $\Omega$ is bounded and strictly convex at $\xi\in\partial\Omega$, there exist constants $0<\varepsilon,\delta<1$ independent of $\xi$, such that
  $$D'^2\rho(0')\geq\varepsilon I_{n-1}\quad\text{and}\quad\rho(x')\geq\frac{\varepsilon}{2}|x'|^2\text{ for }|x'|<\delta,$$
  under the local coordinate system at $\xi$, where $I_{n-1}$ is the $(n-1)\times(n-1)$ identity matrix. Take
  $$r=\max\bigg\{\frac{2}{\varepsilon},\frac{(\text{diam}\Omega+1)^2}{\varepsilon\delta^2}+\frac{\varepsilon}{4}\delta^2\bigg\}\quad\text{and}\quad y=(0',r).$$
  We claim that for any $z\in\partial\Omega$, $z\in\overline{B_r(y)}$. Indeed,
  if $z=(z',\rho(z'))$ with $|z'|<\delta$, then
  \begin{equation*}
  \begin{split}
  |z-y|^2 & =|z'|^2+(r-\rho(z'))^2\leq|z'|^2+\left(r-\frac{\varepsilon}{2}|z'|^2\right)^2 \\
  & =r^2+(1-\varepsilon r)|z'|^2+\frac{\varepsilon^2}{4}|z'|^4 \\
  &\leq r^2.
  \end{split}
  \end{equation*}
  If $z\in\partial\Omega\setminus\{(z',\rho(z'))|\,|z'|<\delta\}$, then $\frac{\varepsilon}{2}\delta^2\leq z_n\leq r$. It follows that
  \begin{equation*}
    \begin{split}
       |z-y|^2 & =|z'|^2+(r-z_n)^2\leq|z'|^2+\left(r-\frac{\varepsilon}{2}\delta^2\right)^2\\
         & = r^2+(\text{diam}\Omega)^2-\varepsilon\delta^2r+\frac{\varepsilon^2}{4}\delta^4 \\
         & \leq r^2.
    \end{split}
  \end{equation*}
  Hence, $\Omega$ satisfies a uniform enclosing sphere condition.
\end{proof}

\begin{propB}\label{lem:cone}
\hypertarget{B}{Let} $\Omega$ be a bounded domain of $\mathbb{R}^n$ satisfying an exterior cone condition, $n\geq2$; that is for every $\xi\in\partial\Omega$, there exists a finite right circular cone $\mathcal{C}$, with vertex $\xi$, such that $\overline{\Omega}\cap\overline{\mathcal{C}}=\{\xi\}$. Let $\varphi\in C^0(\partial\Omega)$. Then there exists a solution $u\in C^0(\overline{\Omega})$ of
\begin{equation*}
  \begin{cases}
    \Delta u=0 & \mbox{in } \Omega, \\
    u=\varphi & \mbox{on } \partial\Omega.
  \end{cases}
\end{equation*}
\end{propB}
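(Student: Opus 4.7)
The plan is to apply the classical Perron method for the Laplace equation. Define the Perron family
\[
\mathcal{S}_\varphi = \{v \in C^0(\overline\Omega) : v \text{ is subharmonic in } \Omega \text{ and } v \leq \varphi \text{ on } \partial\Omega\},
\]
which contains the constant $\inf_{\partial\Omega}\varphi$ and in which every element satisfies $v \leq \sup_{\partial\Omega}\varphi$ by the maximum principle. Setting
\[
u(x) := \sup\{v(x) : v \in \mathcal{S}_\varphi\}, \qquad x \in \overline{\Omega},
\]
one obtains a bounded function on $\overline\Omega$. I would then show in two stages that $u$ is the required solution.

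For interior harmonicity I would use the standard harmonic-lifting argument. Fix a ball $B \subset\subset \Omega$ and $x_0 \in B$, choose $v_k \in \mathcal{S}_\varphi$ with $v_k(x_0) \to u(x_0)$, replace $v_k$ by $\max\{v_1,\dots,v_k\}$ and then by its Poisson lifting on $B$ (the outcome still lies in $\mathcal{S}_\varphi$ by the maximum principle), and apply Harnack's convergence theorem to extract a locally uniform limit $h$ that is harmonic on $B$ with $h(x_0) = u(x_0)$. Repeating the construction at an arbitrary $y \in B$ and comparing the two limits by the maximum principle forces $h \equiv u$ on $B$; hence $u$ is harmonic in $\Omega$.

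The main obstacle is the boundary continuity at each $\xi \in \partial\Omega$, which is where the exterior cone condition enters through the construction of a local barrier: a continuous superharmonic function $w \geq 0$ on $\overline\Omega \cap \overline{B_\rho(\xi)}$ with $w(\xi) = 0$ and $w > 0$ elsewhere. Let $\mathcal{C}$ be the exterior cone at $\xi$. After translating $\xi$ to the origin and rotating, $\mathcal{C}$ determines a spherical cap $\Sigma := \mathbb{S}^{n-1} \setminus \mathcal{C}$ that is a proper subdomain of $\mathbb{S}^{n-1}$; let $\phi > 0$ be the first Dirichlet eigenfunction of the spherical Laplacian on $\Sigma$ with eigenvalue $\mu > 0$, and let $\lambda > 0$ be the positive root of $\lambda(\lambda + n - 2) = \mu$. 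Separation of variables shows that $\widetilde{w}(x) = |x|^\lambda \phi(x/|x|)$ is harmonic on $\mathbb{R}^n \setminus \mathcal{C}$, vanishes at the origin, and is strictly positive on $\overline{\mathbb{R}^n \setminus \mathcal{C}} \setminus \{0\}$; restricting to a small ball around $\xi$ yields the desired local barrier. (For $n = 2$ the classical Zaremba barrier gives an even more elementary construction.)

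With the barrier in hand, the boundary condition is routine. Given $\varepsilon > 0$, pick $\rho > 0$ with $|\varphi(\eta) - \varphi(\xi)| < \varepsilon$ for $\eta \in \partial\Omega \cap B_\rho(\xi)$, and choose $K > 0$ so large that
\[
\varphi(\xi) - \varepsilon - K w(x) \leq \varphi(x) \leq \varphi(\xi) + \varepsilon + K w(x) \quad \text{on } \partial\Omega \cap \overline{B_\rho(\xi)},
\]
with the two outer functions also serving as a sub/super-function pair on $\overline\Omega$ after a harmless constant adjustment exploiting the boundedness of $\varphi$. The lower bracket belongs to $\mathcal{S}_\varphi$ (hence lies below $u$), while the upper bracket dominates every element of $\mathcal{S}_\varphi$ via the maximum principle (hence lies above $u$). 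Sending $x \to \xi$ and then $\varepsilon \to 0$ yields $\lim_{x\to\xi} u(x) = \varphi(\xi)$. The principal technical step is the eigenfunction-based construction of the barrier; the rest is classical Perron machinery.
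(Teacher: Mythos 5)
Your proof is correct and follows the same overall strategy as the paper: the Perron method reduces the problem to constructing a local barrier at each boundary point, and the exterior cone condition supplies that barrier. The paper invokes \cite[Theorem 2.14]{GT} to handle the Perron machinery and then devotes its proof entirely to the barrier, which is where the two arguments diverge. Your barrier is the separated harmonic function $\widetilde{w}(x)=|x|^\lambda\phi(x/|x|)$, where $\phi$ is the first Dirichlet eigenfunction of the spherical Laplacian on the complementary cap $\Sigma=\mathbb{S}^{n-1}\setminus\overline{\mathcal{C}}$ and $\lambda$ is the positive root of $\lambda(\lambda+n-2)=\mu$; this gives a genuinely harmonic local barrier via a clean structural argument, at the cost of invoking spectral theory on a spherical domain. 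The paper instead constructs a merely superharmonic barrier $w=r^\lambda f(\theta)$ by the elementary explicit choice $f(\theta)=\int_\theta^{\Theta_0}(t\csc t)^{n-2}\,dt$ (for $n\geq3$; $f(\theta)=\cos(\lambda\theta)$ for $n=2$), and then chooses $\lambda>0$ small enough that $\Delta w\leq0$ follows from the pointwise estimate $\theta^{n-3}(\csc\theta)^{n-2}\geq1/\theta_0$. The paper's route avoids eigenvalue existence/positivity arguments entirely and gives a fully explicit formula; yours is conceptually tidier and makes the role of the cone aperture transparent through the spectral gap. One small point worth being explicit about: you should take $\Sigma=\mathbb{S}^{n-1}\setminus\overline{\mathcal{C}}$ so that $\Sigma$ is open, and you should note (as you tacitly do with the ``harmless constant adjustment'') that the barrier must be truncated and extended from $\overline{\Omega}\cap\overline{B_\rho(\xi)}$ to a globally superharmonic function on $\overline{\Omega}$ before the sub/super-function comparison can be made global; both are routine.
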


\begin{proof}
The proposition was mentioned in \cite[Problem 2.12]{GT}, and here we give the proof for the reader's convenience. By \cite[Theorem 2.14]{GT}, it suffices to prove that for every $\xi\in\partial\Omega$, there is a local barrier $w$ at $\xi\in\partial\Omega$ relative to $\Omega$. Without losing the generality, we may assume $\xi=0$ and $x_n$-axis is in the direction of the axis of $\mathcal{C}$. For $x\neq0$, let
$$r(x)=|x|\quad\text{and}\quad\theta(x)=\arccos\frac{x_n}{r}.$$
Take $0<r_0<1$ and $\theta_0\in(\frac{\pi}{2},\pi)$ such that
$$\mathcal{C}:=\{0\}\cup\{x\in\mathbb{R}^n|\,0<r(x)\leq r_0,~\theta_0\leq\theta(x)\leq\pi\},$$
and
$$\Omega\cap B_{r_0}(0)\subset\{x\in B_{r_0}(0)|\,0\leq\theta(x)<\theta_0\}.$$
Consider $w$ given by $w(0)=0$ and
$$w(x)=r^\lambda f(\theta)\quad\text{for}~x\neq0,$$
where the constant $\lambda$ and the function $f$ will be chosen later. For $n=2$, we can take $0<\lambda\leq\frac{\pi}{2\theta_0}$ and $f(\theta)=\cos(\lambda\theta)$. Then
$$\Delta w=r^{\lambda-2}(\lambda^2f(\theta)+f''(\theta))=0\quad\text{in}~\Omega\cap B_{r_0}(0),$$
and $w>w(0)$ in $\Omega\cap B_{r_0}(0)$. For $n\geq3$, direct calculation gives
$$\theta_i(x)=\frac{-1}{\sqrt{1-(\frac{x_n}{r})^2}}\bigg(\frac{\delta_{in}}{r}-\frac{x_n}{r^2}\frac{x_i}{r}\bigg)=-r^{-1}\bigg(\delta_{in}-\frac{x_ix_n}{r^2}\bigg)\csc\theta,$$
and
$$w_{i}(x)=\lambda r^{\lambda-2}x_if(\theta)-r^{\lambda-1}\bigg(\delta_{in}-\frac{x_ix_n}{r^2}\bigg)f'(\theta)\csc\theta,$$
\begin{equation*}
\begin{split}
w_{ii}(x)= & \bigg(\lambda r^{\lambda-2}+\lambda(\lambda-2)r^{\lambda-3}\frac{x_i}{r}x_i\bigg)f(\theta)-\lambda r^{\lambda-3}\bigg(\delta_{in}-\frac{x_ix_n}{r^2}\bigg)x_if'(\theta)\csc\theta\\
&-\bigg((\lambda-1)r^{\lambda-2}\frac{x_i}{r}\bigg(\delta_{in}-\frac{x_ix_n}{r^2}\bigg)-r^{\lambda-1}
\bigg(\frac{\delta_{in}x_i+x_n}{r^2}-2\frac{x_ix_n}{r^3}\frac{x_i}{r}\bigg)\bigg) f'(\theta)\csc\theta\\
&-r^{\lambda-2}\bigg(\delta_{in}-\frac{x_ix_n}{r^2}\bigg)^2f'(\theta)(\csc\theta)^2\cot\theta\\
&+r^{\lambda-2}\bigg(\delta_{in}-\frac{x_ix_n}{r^2}\bigg)^2f''(\theta)(\csc\theta)^2.
\end{split}
\end{equation*}
From this, we have
\begin{equation*}
\begin{split}
\Delta w & =r^{\lambda-2}\bigg(\lambda(\lambda+n-2)f(\theta)+(n-2)\cot\theta f'(\theta)+f''(\theta)\bigg)\\
& =r^{\lambda-2}\bigg(\lambda(\lambda+n-2)f(\theta)+(\csc\theta)^{n-2}\frac{\mathrm{d}}{\mathrm{d}\theta}\bigg((\sin\theta)^{n-2}f'(\theta)\bigg)\bigg).
\end{split}
\end{equation*}
Fix $\theta_0<\Theta_0<\pi$, let
$$f(\theta)=\int_{\theta}^{\Theta_0}(t\csc t)^{n-2}\mathrm{d}t.$$
Then
\begin{equation*}
\begin{split}
\Delta w & =r^{\lambda-2}\bigg(\lambda(\lambda+n-2)f(\theta)-(n-2)\theta^{n-3}(\csc\theta)^{n-2}\bigg)\\
& \leq r^{\lambda-2}\bigg(\lambda(\lambda+n-2)f(\theta)-\frac{n-2}{\theta_0}\bigg)\quad\text{in}~\Omega\cap B_{r_0}(0).
\end{split}
\end{equation*}
Take
$$\lambda=\frac{n-2}{2}\bigg(\bigg(1+\frac{4}{(n-2)\theta_0f(0)}\bigg)^\frac{1}{2}-1\bigg)>0.$$
Combining $f(\theta)\leq f(0)$, we get
$$\Delta w\leq0\quad\text{in}~\Omega\cap B_{r_0}(0).$$
Clearly, $w>w(0)$ in $\overline{\Omega\cap B_{r_0}(0)}\setminus\{0\}$. Therefore, $w$ is a local barrier at $0$.
\end{proof}

We end this section by giving a few specific examples.
\begin{example}
  Let $\Omega$ be a convex domain with $\partial\Omega\in C^2$ and $0\in\partial\Omega$. Then $\varphi(x)=|x|$ is semi-convex with respect to $\partial\Omega$ but $\varphi\notin C^2(\partial\Omega)$. The definition of a $C^2$ function on $\partial\Omega\in C^2$ can refer to \cite[Chapter 6.2]{GT}.

  We first check that $\varphi=|x|$ is not $C^2$ at $0$. Indeed, under the local coordinate system at $0$, we have for $i=1,\cdots,n-1$ and $x'=(0,\cdots,0,x_i,0,\cdots,0)\in\mathbb{R}^{n-1}$,
  $$\lim_{x_i\to0^+}\frac{\varphi(x',\rho(x'))-\varphi(0',\rho(0'))}{x_i}=\lim_{x_i\to0^+}\frac{\sqrt{x_i^2+\rho(x')^2}}{x_i}
  \geq1,$$
  $$\lim_{x_i\to0^-}\frac{\varphi(x',\rho(x'))-\varphi(0',\rho(0'))}{x_i}=\lim_{x_i\to0^-}\frac{\sqrt{x_i^2+\rho(x')^2}}{x_i}
  \leq-1.$$
  Hence, $\varphi$ is not $C^1$ at $0$.

  On the other hand, under the local coordinate system at $0$, $\rho\geq0$ is convex due to the convexity of $\Omega$. Then
  for $\psi(x'):=\varphi(x',\rho(x'))$ and $x,y\in\partial\Omega$ near $0$, we have
  \begin{equation*}
  \begin{split}
  (\psi(tx'+(1-t)y'))^2 =&\,  (\varphi(tx'+(1-t)y',\rho(tx'+(1-t)y')))^2 \\
  = &\, |tx'+(1-t)y'|^2+(\rho(tx'+(1-t)y'))^2 \\
  \leq &\, |tx'+(1-t)y'|^2+(t\rho(x')+(1-t)\rho(y'))^2 \\
  = &\, |tx+(1-t)y|^2 \\
  \leq &\, (t|x|+(1-t)|y|)^2 \\
  = &\, (t\psi(x')+(1-t)\psi(y'))^2.
  \end{split}
  \end{equation*}
  Together with $\psi\geq0$, we get
  $$\psi(tx'+(1-t)y')\leq t\psi(x')+(1-t)\psi(y'),$$
  and so $\psi$ is convex near $0'$. Hence, $\varphi$ is semi-convex with respect to $\partial\Omega$ at $0$. For $\xi\in\partial\Omega\setminus\{0\}$, $\varphi$ is $C^2$ at $\xi$. It is obvious that $\varphi$ is semi-convex with respect to $\partial\Omega$ at $\xi$.
\end{example}

%


\begin{example}
There exist domains $\Omega$ satisfying that there is an enclosing sphere at every $\xi\in\partial\Omega$, but $r(\xi)$ is not bounded on $\partial\Omega$. For instance,
$$\Omega=\{x\in\mathbb{R}^n|~x_1,\cdots,x_{n-1}>0,|x'|^3<x_n<|x'|^{1/3}\}.$$

\begin{figure}[h]
  \centering
  \includegraphics[width=6cm]{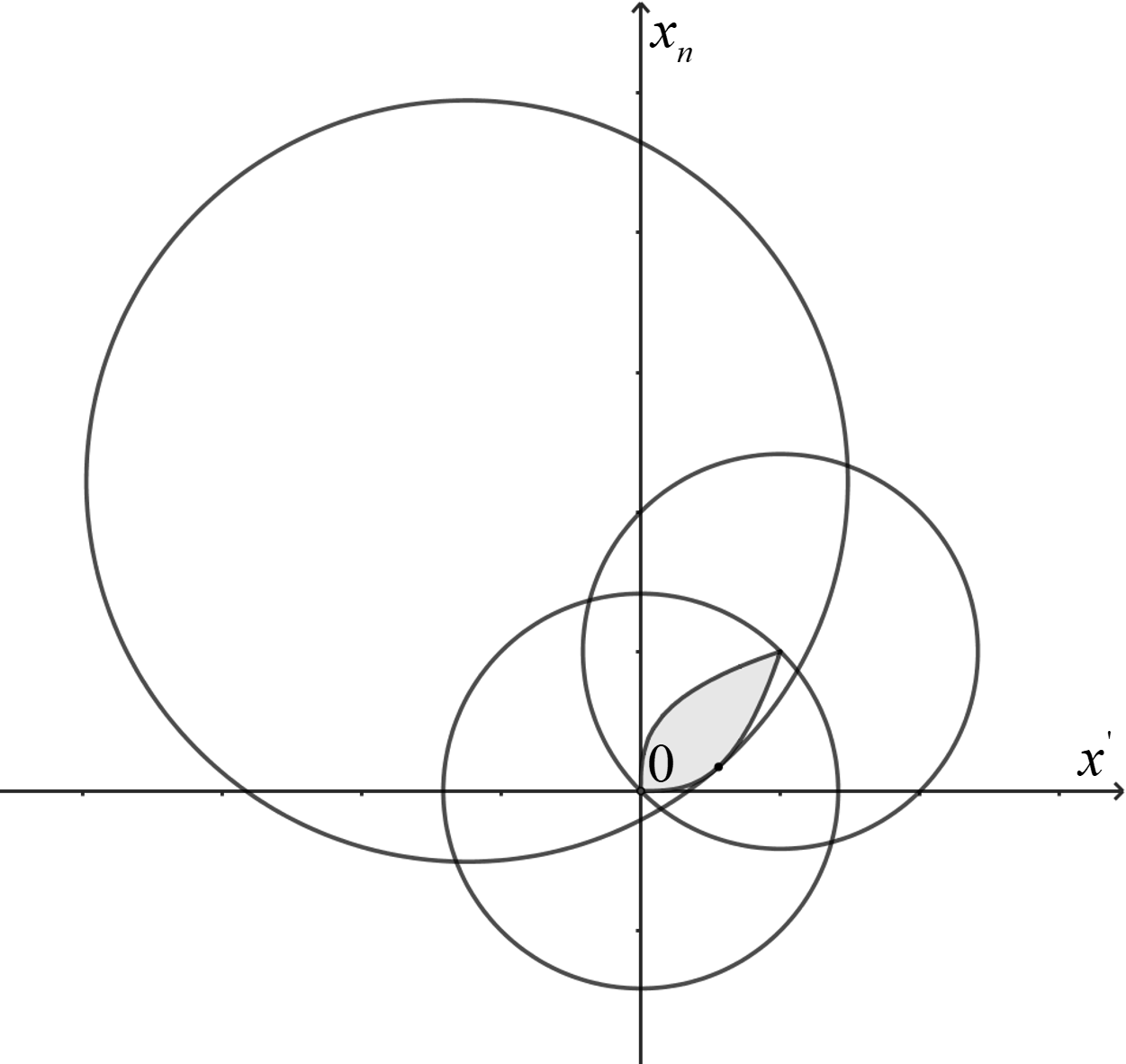}
\end{figure}

\noindent Indeed, an enclosing sphere at $\xi=(0,\cdots,0)$ can be
$$|x-(1,\cdots,1)|^2=n.$$
An enclosing sphere at $\xi=(1,\cdots,1)$ can be
$$|x|^2=n.$$
Since $\partial\Omega$ is $C^2$ and strictly convex except for points $(0,\cdots,0)$ and $(1,\cdots,1)$, there is an enclosing sphere at these boundary points due to the proof of Proposition \hyperlink{A}{A}.

On the other hand, we will prove that the radius $r$ of the enclosing sphere is not bounded on $\partial\Omega$. Denote $\Gamma=\partial\Omega\cap\{\xi\in\mathbb{R}^n|\,0<|\xi'|<1,~\xi_n=|\xi'|^3\}$. Direct calculation gives that the unit inner normal vector $\nu$ of $\partial\Omega$ at $\xi=(\xi',|\xi'|^3)\in\Gamma$ is
$$\nu=\frac{(-3|\xi'|\xi',1)}{\sqrt{9|\xi'|^4+1}}.$$
Denote by $\partial B_r(y)$ an enclosing sphere at $\xi\in\Gamma$. Then $y=\xi+r\nu$, and an enclosing sphere at $\xi$ is
$$|x-\xi-r\nu|=r.$$
Since $0\in\partial\Omega\subset\overline{B_r(y)}$, we have
$$|-\xi-r\nu|\leq r.$$
That is $|\xi|^2+2r\nu\cdot\xi\leq0$. It follows that
\begin{equation*}
   r\geq\frac{|\xi|^2}{-2\nu\cdot\xi}=\frac{(1+|\xi'|^4)\sqrt{9|\xi^4|+1}}{4|\xi'|}\to\infty\quad\text{as}~\xi\to0.
\end{equation*}


\end{example}

\begin{example}
There exist many domains $\Omega$ which satisfy \eqref{H} and a uniform enclosing sphere condition but $\partial\Omega\notin C^1$. For instance $\Omega=\Omega_1\cap\Omega_2$, where
  $$\Omega_1=\{x\in\mathbb{R}^n|\,|x|<1\}\quad\text{and}\quad\Omega_2=\{x\in\mathbb{R}^n|\,|x'|^2+(x_n-a)^2<1\},$$
and $|a|<\sqrt{2}$. Indeed, $\partial\Omega$ is clearly not $C^2$ on $\partial\Omega_1\cap\partial\Omega_2$. On the other hand, $\Omega$ clearly satisfies \eqref{H} and a uniform enclosing sphere condition.

In addition,  $\Omega$ does not satisfy \eqref{H} if $|a|\geq\sqrt{2}$.
\end{example}

\section*{Acknowledgements}
The author C. Wang would like to thank Professor Bo Wang for his helpful suggestions in preliminary discussions.

\end{document}